\theoremstyle{plain}
\newtheorem{teo}{Theorem}[section]      
\newtheorem{prop}[teo]{Proposition}    
\newtheorem{cor}[teo]{Corollary}       
\newtheorem{lem}[teo]{Lemma}            
\theoremstyle{definition}              
\newtheorem{defin}{Definition}[section]  
\theoremstyle{remark}                   
\newtheorem{oss}[teo]{Remark}
\newcommand{\re}{\mathbb{R}}
\numberwithin{equation}{section}
\newcommand{\usup}{\overline{u}}
 \newcommand{\vsup}{\overline{v}}                    
\begin{document}

\title[Saddle-shaped solutions of bistable elliptic equations involving the half-Laplacian]{Saddle-shaped solutions of bistable elliptic equations involving the half-Laplacian}

\thanks{
The author was supported by grants
MTM2008-06349-C0301 (Spain), 2009SGR345 (Catalunya) and partially supported by University of Bologna (Italy), funds for selected
research topics.
}
\subjclass[2010]{Primary: 35J61, 35J20; Secondary: 35B40, 35B08.}
\keywords{Half-Laplacian, saddle-shaped solutions, asymptotic behaviour, stability properties.}
\author[Eleonora Cinti]{Eleonora Cinti}
\address{
Max Planck Institute for Mathematics in the Sciences\\
Inselstr. 22\\
04103 Leipzig (Germany)} \email{cinti@mis.mpg.de}

\maketitle
\begin{abstract}
We establish existence and qualitative properties of
saddle-shaped solutions of the elliptic fractional equation
$(-\Delta)^{1/2}u=f(u)$ in all the space $\re^{2m}$, where $f$ is
of bistable type. These solutions are odd with respect to the Simons cone and even with
respect to each coordinate.

More precisely, we prove the existence of a saddle-shaped solution in every even dimension $2m$, 
as well as its monotonicity properties, asymptotic behaviour, and instability in dimensions $2m=4$ and $2m=6$.

These results are relevant in connection with the analog for fractional equations of a
conjecture of De Giorgi on the 1-D symmetry of certain
solutions. Saddle-shaped solutions are the simplest candidates, besides 1-D solutions, 
to be global minimizers in high dimensions, a property not yet established.
\end{abstract}

\section{Introduction and results}
This paper concerns the study of saddle-shaped solutions of
elliptic equations with fractional diffusion of the form
\begin{equation}\label{eq1-saddle}
(-\Delta)^{1/2}u=f(u)\quad \mbox{in}\;\re^n,\end{equation} where
$n=2m$ is an even integer and $f$ is of bistable type.

The fractional powers of the Laplacian are the infinitesimal generators of
 L{\'e}vy stable processes and appear in anomalous diffusion phenomena in plasmas, flame propagation, chemical reaction in liquids and population dynamics.

Our interest in saddle-shaped solutions originates from the following
conjecture of De Giorgi.
Consider the Allen-Cahn equation
\begin{equation}\label{AC-saddle}
 -\Delta u=u-u^3\quad \mbox{in}\;\re^n,
\end{equation}
which models phase transitions.
In 1978 De Giorgi conjectured that the
level sets of every bounded solution of \eqref{AC-saddle}, which is monotone in one direction, must be hyperplanes, at least if $n\leq 8$.
That is, such solutions depend only on one Euclidian variable.

The
conjecture has been proven to be true in dimension $n=2$ by
Ghoussoub and Gui \cite{GG} and in dimension $n=3$ by Ambrosio and
Cabr\'e \cite{AC}. For $4\leq n\leq 8$, if $\partial_{x_n}u>0$,
and assuming the additional condition
$$\lim_{x_n \rightarrow \pm \infty}u(x',x_n)=\pm 1\quad \mbox{for all}\;x'\in \re^{n-1},$$ it has been
established by Savin \cite{S}. Recently a counterexample to the
conjecture for $n\geq 9$ has been found by del Pino, Kowalczyk
and Wei \cite{dPKW}.

For the fractional equation $(-\Delta)^su=f(u)$ in $\re^n$ with
$0<s<1$, the conjecture has been proven to be true when $n=2$ and
$s=1/2$ by Cabr\'e and Sol\`a-Morales \cite{C-SM}, and when $n=2$
and for every $0<s<1$ by Cabr\'e and Sire \cite{C-Si1}, and by Sire
and Valdinoci \cite{SV}. In two recent papers \cite{C-Cinti1,C-Cinti2}, Cabr\'e and the author prove the conjecture in
dimension $n=3$ for every power $1/2\leq s<1$.

Coming back to the classical Allen-Cahn equation, Savin \cite{S}, proved that if $n\leq 7$ then every global
minimizer of the equation $-\Delta u=u-u^3$ in $\re^n$ is
one-dimensional.
A natural question arises: is there a global
minimizer in $\re^8$ which is not one-dimensional? Saddle-shaped
solutions are the candidates to give a positive answer to this
question, which is still an open problem.

Moreover, by a result of Jerison and Monneau \cite{JM}, if one could prove that saddle-shaped
solutions are global minimizers in $\re^8$, one would have a
counterexample to the conjecture of De Giorgi in $\re^9$, in an
alternative way to that of \cite{dPKW}.

Saddle-shaped solutions are expected to have relevant
variational properties due to a well known connection between
nonlinear equations modeling phase transitions and the theory of
minimal surfaces. This connection also motivated the conjecture of
De Giorgi.

More precisely, the saddle-shaped solutions that we
consider are even with respect to the coordinate axes and odd with
respect to the Simons cone, which is defined as follows. For
$n=2m$ the Simons cone ${\mathcal C}$ is given by:
$$\mathcal{C}=\{x\in
\re^{2m}:x_1^2+...+x_m^2=x_{m+1}^2+...+x_{2m}^2\}.$$ We recall
that the Simons cone has zero mean curvature at every point $x\in
\mathcal C \setminus \{0\}$, in every dimension $2m\geq 2$.
Moreover in dimensions $2m \geq 8$ it is a minimizer of the area
functional, that is, it is a minimal cone (in the variational
sense).

We define two new variables $$s=\sqrt{x_1^2+\dots + x_m^2} \quad
\mbox{ and }\quad t=\sqrt{x_{m+1}^2+\dots + x_{2m}^2},$$ for which
the Simons cone becomes ${\mathcal C}=\{s=t\}$.

We now introduce the notion of saddle-shaped solution. These solutions
depend only on $s$ and $t$, and are odd with respect to the Simons
cone.
\begin{defin}\label{def-saddle} Let $u$ be a bounded solution of
$(-\Delta)^{1/2} u=f(u)$ in $\re^{2m}$, where $f\in C^1$
is odd. We say that $u:\re^{2m}\rightarrow\re$ is a  {\it saddle-shaped}
 (or simply \textit{saddle}) solution if
\renewcommand{\labelenumi}{$($\alph{enumi}$)$}
\begin{enumerate}
\item $u$ depends only on the variables $s$ and $t$. We write
$u=u(s,t)$; \item $u>0$ for $s>t$; \item $u(s,t)=-u(t,s)$.
\end{enumerate}
\end{defin}

\begin{oss}
 If $u$ is a saddle solution then, in particular, $u=0$ on the
Simons cone ${\mathcal C}=\{s=t\}$. In other words, ${\mathcal C}$
is the zero level set of $u$.
\end{oss}

Saddle solutions for the classical equation $-\Delta u=f(u)$ were
first studied by Dang, Fife, and Peletier in \cite{DFP} in
dimension $2m=2$ for $f$ odd, bistable and $f(u)/u$ decreasing for
$u \in (0,1)$. They proved the existence and uniqueness of
saddle-shaped solutions and established monotonicity properties
and the asymptotic behaviour. The instability property of saddle
solutions in dimension $2m=2$ was studied by Schatzman \cite{Sc}.
In two recent works \cite{CT, CT2}, Cabr\'e and Terra proved the existence of
saddle-shaped solutions for the equation $-\Delta u=f(u)$ in
$\re^{2m}$, where $f$ is of bistable type, in every even dimension
$2m$. Moreover they established some qualitative properties of
these solutions, such as monotonicity properties, asymptotic behaviour,
and also instability in dimensions $2m=4$ and $2m=6$.

In this work, we establish existence and qualitative properties
of saddle-shaped solutions for the bistable fractional equation
\eqref{eq1-saddle}.

To study the nonlocal problem (\ref{eq1-saddle}) we will realize it as a
local problem in $\re^{n+1}_+$ with a nonlinear Neumann condition
on $\partial \re_{+}^{n+1}=\re^n$. More precisely, if $u=u(x)$ is
a function defined on $\re^n$, we consider its harmonic extension
$v=v(x,\lambda)$ in $\re^{n+1}_+=\re^n\times(0,+\infty)$. It is
well known (see \cite{C-SM,CS}) that $u$ is a solution of
(\ref{eq1-saddle}) if and only if $v$ satisfies
\begin{equation}\label{eq2-saddle}
\begin{cases}
\Delta v=0& \text{in}\; \re_{+}^{n+1},\\
- \partial_{ \lambda}v=f(v)& \text{on}\; \re^{n}=\partial
\re_{+}^{n+1}.
\end{cases}
\end{equation}

Problem (\ref{eq2-saddle}), associated to the nonlocal equation
\eqref{eq1-saddle}, allows to introduce the notions of \textit{energy}, \textit{stability},
 and \textit{global minimality} for a solution $u$ of problem (\ref{eq1-saddle}).

Let $\Omega \subset \re^{n+1}_+$ be a bounded domain. We denote by
$$\widetilde{B}_r^+=\{(x,\lambda)\in \re^{2m+1}:\lambda>0,\:|(x,\lambda)|<r\}$$ and by
$\widetilde{B}^+_r(x,\lambda)=(x,\lambda)+\widetilde{B}_r^+$.

We define the following subset of $\partial \Omega$:
\begin{equation}\label{bordo0}
\partial^0\Omega:=\{(x,0)\in
\re^{n+1}_+:\widetilde{B}^+_{\varepsilon}(x,0)\subset \Omega
\;\mbox{for some}\;\varepsilon>0\}\end{equation} and
\begin{equation}\label{bordo+}
\partial^+\Omega:=\overline{\partial\Omega \cap\re^{n+1}_+}.
\end{equation}

Given a $C^{1,\alpha}$ nonlinearity
$f:\re\rightarrow \re$, for some $0<\alpha<1$, define
$$G(u)=\int_u^1 f.$$
We have that $G\in C^2(\re)$ and $G'=-f$.

Let $v$ be a $C^1(\overline{\Omega})$ function with $|v|\leq 1$.
We consider the energy functional
\begin{equation}\label{energia-saddle}
{\mathcal E}_{\Omega}(v)=\int_{\Omega}\frac{1}{2}|\nabla v|^2 +
\int_{\partial^0\Omega}G(v). \end{equation}
Observe that the potential energy is computed only on the boundary
$\partial^0\Omega\subset \partial\re_+^{n+1}$. This is a quite different
situation from the one of interior reactions.

We start by recalling that problem ($\ref{eq2-saddle}$) can be viewed as
the Euler-Lagrange equation associated to the energy functional
${\mathcal E}$.

%

\begin{defin}\label{stable}
\textit{a)}  We say that a bounded solution $v$ of
(\ref{eq2-saddle}) is {\it stable} if the second variation of energy
$\delta^2{\mathcal E}/\delta^2\xi$, with respect to  perturbations
$\xi$ compactly supported in $\overline{\re_+^{n+1}}$, is
nonnegative. That is, if
\begin{equation}\label{Q_v}
Q_v(\xi):=\int_{{\re}_+^{n+1}}
|\nabla\xi|^2-\int_{\partial\re_+^{n+1}}f'(v)\xi^2\geq
0\end{equation} for every $\xi\in
C_0^\infty(\overline{\re_+^{n+1}})$.

We say that $v$ is {\it unstable} if and only if $v$ is not
stable.\\
\textit{b)}  We say that a bounded solution $u$ of
(\ref{eq1-saddle}) in $\re^{2m}$ is {\it stable} ({\it unstable}) if its
harmonic extension $v$ is a stable (unstable) solution for the
problem (\ref{eq2-saddle}).
\end{defin}

 Another important
notion related to the energy functional ${\mathcal E}$ is the one
of global minimality.

\begin{defin}\label{minimizer}
\textit{a)}  We say that a bounded $C^1(\overline{\re_+^{n+1}})$ function $v$ in $\re^{n+1}_+$ is a
{\it global minimizer} of $(\ref{eq2-saddle})$ if
$${\mathcal E}_{\Omega}(v)\leq {\mathcal E}_{\Omega}(v+\xi),$$ for
every bounded domain $\Omega\subset \overline{\re_+^{n+1}}$ and
every $C^{\infty}$ function $\xi$ with compact support in
$\Omega\cup \partial^0\Omega$.\\
\textit{b)}  We say that a bounded $C^1$ function $u$ in $\re^{n}$ is a {\it
global minimizer} of $(\ref{eq1-saddle})$ if its harmonic extension $v$
is a global minimizer of (\ref{eq2-saddle}).
\end{defin}
Observe that the perturbations $\xi$ do not need to vanish on
$\partial^0\Omega$, in contrast from interior reactions.

In some references, global minimizers are called ``local
minimizers'', where local stands for the fact that the energy is
computed in bounded domains. Clearly, every global minimizer is a
stable solution.

In what follows we will assume some or all of the following properties on $f$: 
\begin{equation}\label{h1}
f \; \text{ is odd};\end{equation}
\begin{equation}\label{h2}
 G\geq 0=G(\pm 1)\; {\rm in\, } \re, \,{\rm and }\, G>0\; {\rm in }\, (-1,1);\end{equation}
\begin{equation}\label{h3}  f' \; \text{ is decreasing in } (0,1).\end{equation}

Note that, if \eqref{h1} and \eqref{h2} hold, then $f(0)=f(\pm
1)=0.$ Conversely, if $f$ is odd in $\re$, positive with $f'$
decreasing in $(0,1)$ and negative in $(1,\infty)$ then $f$
satisfies \eqref{h1}, \eqref{h2} and \eqref{h3}. Hence, the
nonlinearities $f$ that we consider are of ``balanced bistable
type", while the potentials $G$ are of ``double well type". Our
three assumptions \eqref{h1}, \eqref{h2}, \eqref{h3} are satisfied
for the scalar Allen-Cahn type equation
\begin{equation}\label{GL}
(-\Delta)^{1/2} u= u-u^3.
\end{equation}
In this case we have that $G(u)=(1/4)(1-u^2)^2$ and \eqref{h1},
\eqref{h2}, \eqref{h3} hold. The three hypothesis also hold for
the Peierls-Nabarro problem
\begin{equation}\label{PN}(-\Delta)^{1/2} u=\sin (\pi u),\end{equation} for
which $G(u)=(1/\pi)(1+\cos (\pi u))$.

By a result of Cabr\'e and Sol\`a-Morales \cite{C-SM}, assumption
\eqref{h2} on $G$ guarantees the existence of an increasing solution, from
$-1$ to $1$, of \eqref{eq1-saddle} in $\re$. We call these solutions \textit{layer} solutions. In addition, such
an increasing solution is unique up to translations.

The following is the precise result established in \cite{C-SM}.
\begin{teo}\label{esistenza-layer}{\textbf{\it{(\cite{C-SM})}}}
Let $f$ be any $C^{1,\alpha}$ function with
$0<\alpha<1$ and $G'=-f$. Then:
\begin{itemize}
\item There exists an increasing solution $u_0:\re\rightarrow (-1,1)$ of
$(-\Delta)^{1/2}u_0=f(u_0)$ in $\re$ (that is, a layer solution $u_0$)
if and
only if
$$G'(-1)=G'(1)=0,\;\;\mbox{and}\;\;G>G(-1)=G(1)\;\;\mbox{in}\;(-1,1).$$
\item If $f'(\pm 1)<0$, then a layer solution of (\ref{eq2-saddle}) is
unique up to translations. \item If $f$ is odd
and $f'(\pm 1)<0$, then every layer solution of (\ref{eq2-saddle}) is odd
in $x$ with respect to some half-axis. That is,
$u(x+b)=-u(-x+b)$ for some $b\in
\re$.\end{itemize}
\end{teo}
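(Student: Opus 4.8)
The plan is to transform \eqref{eq1-saddle} on $\re$ into the boundary reaction problem \eqref{eq2-saddle} with $n=1$ on the half-plane $\re^2_+$, and to prove all three assertions at the level of the extension. By the equivalence recalled in \cite{C-SM,CS}, a bounded increasing solution $u_0$ of $(-\Delta)^{1/2}u_0=f(u_0)$ in $\re$ is the same as its bounded harmonic extension $v=v(x,\lambda)$ solving \eqref{eq2-saddle}, with $u_0=v(\cdot,0)$; thus a layer solution corresponds to a bounded solution $v$ of \eqref{eq2-saddle} that is nondecreasing in $x$ with $v(\cdot,0)$ tending to $\pm1$ as $x\to\pm\infty$.

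For the existence statement (the ``if'' direction) I would argue variationally. On the half-ball $\widetilde B_R^+\subset\re^2_+$, minimize the energy $\mathcal E_{\widetilde B_R^+}$ of \eqref{energia-saddle} among $C^1$ functions equal, on the spherical part $\partial^+\widetilde B_R^+$, to a fixed profile that interpolates monotonically from $-1$ to $1$ and that is chosen so as to anchor the transition near $x=0$. The direct method, together with the truncation $v\mapsto\max(\min(v,1),-1)$ — admissible after modifying $f$ outside $[-1,1]$, which does not affect layer solutions — produces a minimizer $v_R$ with $|v_R|\le1$ solving \eqref{eq2-saddle} in $\widetilde B_R^+$ with the Neumann condition on $\partial^0\widetilde B_R^+$; by rearrangement in $x$ (or the sliding method applied to $v_R$ and its reflections) one may take $v_R$ nondecreasing in $x$. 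The $C^{1,\alpha}$ boundary estimates for this Neumann problem and a uniform energy bound (at worst logarithmic in $R$) then let one pass to the limit $R\to\infty$ along a subsequence, producing a bounded entire solution $v$ of \eqref{eq2-saddle}, nondecreasing in $x$; its trace $u_0=v(\cdot,0)$ solves \eqref{eq1-saddle}, and the monotone limits $\ell_\pm:=u_0(\pm\infty)$ exist, are zeros of $f$ (pass to the limit in the equation), and are distinct (by the anchoring/normalization). Finally the hypotheses $G'(\pm1)=0$ and $G>G(\pm1)$ in $(-1,1)$ identify the only admissible pair of distinct limits as $\ell_-=-1$, $\ell_+=1$, so $u_0$ is a layer. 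The main obstacle of the whole argument is precisely this compactness step: the boundary data must be arranged so that $v_R$ neither collapses to a constant nor lets the transition run off to infinity, which is exactly where the structure of $G$ enters.

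For the necessity of the conditions (the ``only if'' direction), let $u_0$ be an increasing layer with extension $v$. I would establish the Hamiltonian identity that
$$H(x):=\frac12\int_0^{\infty}\bigl(v_\lambda^2-v_x^2\bigr)(x,\lambda)\,d\lambda+G\bigl(v(x,0)\bigr)$$
is constant in $x$: differentiating in $x$, one replaces $v_{xx}$ by $-v_{\lambda\lambda}$ using $\Delta v=0$, integrates by parts in $\lambda\in(0,\infty)$ (the term at $\lambda=\infty$ vanishing by decay of $\nabla v$), and invokes the Neumann condition $-v_\lambda(x,0)=f(v(x,0))=-G'(v(x,0))$ to get $H'\equiv0$. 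Letting $x\to\pm\infty$, where $v(\cdot,0)\to\pm1$ and $\nabla v\to0$ uniformly, yields $H\equiv G(1)=G(-1)$; passing to the limit in the Neumann condition gives $f(\pm1)=0$, i.e. $G'(\pm1)=0$. Since then $G(u_0(x))-G(1)=\tfrac12\int_0^\infty(v_x^2-v_\lambda^2)(x,\lambda)\,d\lambda$ for all $x$ and $u_0$ sweeps out $(-1,1)$, one gets $G\ge G(\pm1)$ on $[-1,1]$, and the strict inequality on $(-1,1)$ follows by excluding an intermediate critical value of $G$ at the level $G(\pm1)$, via the strong maximum principle applied to the linear problem solved by $v-c$, as in \cite{C-SM}.

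For uniqueness up to translation when $f'(\pm1)<0$, given two increasing layers with extensions $v^1,v^2$, I would use the sliding method in the $x$ variable: the hypothesis $f'(\pm1)<0$ makes each layer approach $\pm1$ nondegenerately (exponential-type convergence), so that $v^1(\cdot+\tau,\cdot)\ge v^2$ for $\tau$ large; decreasing $\tau$ to the critical value $\tau_0$ produces a contact point, and the strong maximum principle together with the Hopf lemma for the harmonic extension with the linear Neumann condition satisfied by $v^1(\cdot+\tau_0,\cdot)-v^2$ forces $v^1(\cdot+\tau_0,\cdot)\equiv v^2$. When moreover $f$ is odd, $x\mapsto-u_0(-x)$ is again an increasing layer, hence $-u_0(-x)=u_0(x+c)$ for some $c\in\re$; rewriting, $u_0(x+b)=-u_0(-x+b)$ with $b=c/2$, i.e. $u_0$ is odd about $b$. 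Here the only real difficulty is getting the sliding started, that is, controlling the behaviour of layers at infinity precisely enough — which is exactly what $f'(\pm1)<0$ provides.
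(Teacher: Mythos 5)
The theorem you are proving is not actually proved in this paper: it is quoted verbatim from Cabr\'e and Sol\`a-Morales and cited as \cite{C-SM}, and the present work uses it only as an external input (existence, uniqueness, and oddness of the layer $u_0$ and of its extension $v_0$, which feed the barrier constructions of later sections). So there is no in-paper argument to compare against; the proof lives entirely in \cite{C-SM}. With that caveat, your sketch does follow the broad structure of the \cite{C-SM} argument (variational construction on half-balls for existence, Hamiltonian identity for necessity, sliding for uniqueness, and uniqueness combined with oddness of $f$ for the point symmetry), and your handling of the sliding and the final symmetry step is correct.

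There is, however, a concrete gap in your necessity argument. From the Hamiltonian identity $H\equiv G(\pm1)$ you write
$$G(u_0(x))-G(1)=\tfrac12\int_0^\infty\bigl(v_x^2-v_\lambda^2\bigr)(x,\lambda)\,d\lambda$$
and then conclude $G\ge G(\pm1)$ on $[-1,1]$ ``since $u_0$ sweeps out $(-1,1)$.'' That last step tacitly assumes the right-hand side is nonnegative for every $x$, i.e.\ $\int_0^\infty v_\lambda^2\,d\lambda\le\int_0^\infty v_x^2\,d\lambda$ at each $x$. This is exactly the nonlocal Modica-type estimate, one of the main theorems of \cite{C-SM}; it does not follow from the Hamiltonian identity or from monotonicity of $u_0$ alone. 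It is not even true pointwise: for the explicit Peierls--Nabarro layer $v_0(x,\lambda)=\frac{2}{\pi}\arctan\frac{x}{\lambda+1/\pi}$ one computes $v_x^2-v_\lambda^2=\frac{4}{\pi^2}\frac{(\lambda+1/\pi)^2-x^2}{((\lambda+1/\pi)^2+x^2)^2}$, which is negative whenever $|x|>\lambda+1/\pi$; only the $\lambda$-integral is nonnegative, and proving that in general requires a genuine argument (Cabr\'e--Sol\`a-Morales exploit the holomorphic function $v_x-iv_\lambda$, which has positive real part). A related, lesser omission is the uniform decay $\nabla v\to 0$ as $|x|\to\infty$ that you invoke both in the Hamiltonian computation and to ``get the sliding started''; it is not automatic for bounded solutions and needs the linearization-at-infinity analysis of \cite{C-SM} based on $f'(\pm1)<0$. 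Both of these are not routine fillers but are the technical heart of the cited paper, so your sketch should cite them rather than present them as standard.
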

Normalizing the layer solution to vanishing at the origin, we call
it $u_0$ and its harmonic extension in the half-plane $v_0$. Thus we have
\begin{equation}\label{eq-layer}
\begin{cases}
u_0: \re \rightarrow (-1,1)\\
u_0(0)=0,\quad u_0'>0\\
(-\Delta)^{1/2}u_0=f(u_0)\quad \mbox{in}\:\:\re.
\end{cases}
\end{equation}
The monotone bounded solution $u_0$
of the Peierls-Nabarro problem \eqref{PN} in $\re$ is explicit. Calling $v_0$ its harmonic extension in $\re^2_+$ we have that
$$v_0(x,\lambda)=\frac{2}{\pi}\arctan {\frac{x}{\lambda + 1/\pi}}.$$

In the following theorem, we establish the existence of a
saddle-shaped solution for problem \eqref{eq1-saddle} in every even
dimension $n=2m$.
We use the following notations:
$$\mathcal{O}:=\{x\in \re^{2m}:s>t\}\subset \re^{2m}$$
$$\widetilde{\mathcal{O}}:=\{(x,\lambda)\in \re^{2m+1}_+:x\in
\mathcal O\}\subset \re^{2m+1}_+$$
Note that
$$
\partial{\mathcal O}={\mathcal C}.
$$
We define the cylinder $$C_{R,L}=B_R\times (0,L),$$ where $B_R$ is the open ball in $\re^{2m}$ centered at the origin and of radius $R$.
\begin{teo}\label{existence}
For every dimension $2m\geq 2$ and every nonlinearity $f$
satisfying \eqref{h1} and \eqref{h2}, there exists a saddle
solution $u$ of $(-\Delta)^{1/2} u= f(u)$ in $\re^{2m}$,
such that $|u|<1$ in $\re^{2m}$.

Let $v$ be the harmonic extension of the saddle solution $u$ in $\re^{2m+1}_+$. If in addition $f$ satisfies \eqref{h3}, then the second variation of the energy $Q_v(\xi)$ at v, as defined in \eqref{Q_v}, is nonnegative for all function $\xi\in C^1(\overline{\re^{2m+1}_+})$ with compact support in $\overline{\re^{2m+1}_+}$ and vanishing on $\mathcal C\times [0,+\infty)$.
\end{teo}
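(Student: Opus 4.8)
The plan is to construct the saddle solution variationally, in the spirit of Cabr\'e and Terra's treatment of the local problem, and then to read off the second-variation inequality from the minimality of the construction together with the oddness across the Simons cone. Via the extension \eqref{eq2-saddle}, a saddle solution of \eqref{eq1-saddle} corresponds to a bounded solution $v=v(s,t,\lambda)$ of \eqref{eq2-saddle} that is odd under the reflection $\sigma$ swapping $\{s>t\}$ and $\{s<t\}$ and positive in $\widetilde{\mathcal O}$. Since $f$ is odd and such a $v$ vanishes on $\mathcal C\times[0,\infty)$, the reflection across $\mathcal C$ of a solution in $\widetilde{\mathcal O}$ with zero Dirichlet datum on $\mathcal C\times[0,\infty)$ is again a solution. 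I would therefore first produce a nonnegative solution $w$ in $\widetilde{\mathcal O}$, depending only on $(s,t,\lambda)$, with $w=0$ on $\mathcal C\times[0,\infty)$, and then extend it oddly; regularity of the resulting $u$ follows from the $C^{1,\alpha}$ theory for \eqref{eq2-saddle}.

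To build $w$ I would minimize $\mathcal E_{\widetilde{\mathcal O}\cap C_{R,L}}$ over the functions $v\in H^1$ with $|v|\le1$ and $v=0$ on $\mathcal C\cap\overline{C_{R,L}}$, imposing natural boundary conditions on the rest of $\partial(\widetilde{\mathcal O}\cap C_{R,L})$. The direct method yields a minimizer $v_{R,L}$; since $G$ is even (because $f$ is odd) one may take $v_{R,L}\ge0$, and since $G\ge0=G(\pm1)$ by \eqref{h2} a truncation/comparison argument gives $0\le v_{R,L}<1$ in $\widetilde{\mathcal O}\cup\partial^0\widetilde{\mathcal O}$, so $v_{R,L}$ solves \eqref{eq2-saddle} there. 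The key point is that $v_{R,L}\not\equiv0$: because $G(0)>0$ by \eqref{h2}, for $R$ large a competitor vanishing on $\mathcal C$ and close to $1$ outside a bounded-width layer around $\mathcal C$ (modelled on the layer $u_0$ and the distance to $\mathcal C$) has energy $O(R^{2m-1})$, while $\mathcal E(0)=G(0)\,\mathrm{mis}(\partial^0\widetilde{\mathcal O}\cap C_{R,L})\sim cR^{2m}$. From the bound $|v_{R,L}|\le1$ and Schauder estimates for \eqref{eq2-saddle}, uniform in $R,L$ — in the interior, up to $\partial^0$, and up to the Dirichlet part $\mathcal C$, where the odd reflection removes the edge — I would pass to the limit along $R=L=k\to\infty$ to get $w=\lim v_k$ solving \eqref{eq2-saddle} in $\widetilde{\mathcal O}$ with $w=0$ on $\mathcal C\times[0,\infty)$, $0\le w\le1$, $w\not\equiv0$; the strong maximum principle and Hopf's lemma, using $f(0)=f(\pm1)=0$, upgrade this to $w>0$ in $\widetilde{\mathcal O}\cup\partial^0\widetilde{\mathcal O}$ and $|w|<1$ in $\re^{2m+1}_+$, so $|u|<1$ in $\re^{2m}$. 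Only \eqref{h1} and \eqref{h2} enter so far.

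For the second assertion I would use \eqref{h3}. From $f(0)=0$, $f>0$ and $f'$ decreasing on $(0,1)$ one gets that $f(\tau)/\tau$ is decreasing on $(0,1)$, so a Brezis--Oswald type argument — in which the Neumann part of $\partial(\widetilde{\mathcal O}\cap C_{R,L})$ and the Dirichlet condition on $\mathcal C$ make all boundary contributions harmless — shows that the positive solution of \eqref{eq2-saddle} in $\widetilde{\mathcal O}\cap C_{R,L}$ with those data, hence the constrained minimizer $v_{R,L}$, is unique. As the whole problem is invariant under $O(m)\times O(m)$ acting on $x$, uniqueness forces $v_{R,L}$, hence $w$, to depend only on $(s,t,\lambda)$: its odd reflection is then the saddle solution $u$, and the harmonic extension $v$ of $u$ satisfies $v|_{\widetilde{\mathcal O}}=w$. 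Moreover, as a locally uniform limit of minimizers of the local functional $\mathcal E$, $w$ is itself a minimizer — for every bounded $\Omega\subset\overline{\widetilde{\mathcal O}}$, $\mathcal E_\Omega(w)\le\mathcal E_\Omega(w+\psi)$ for every $\psi\in C^\infty$ with compact support vanishing on $\mathcal C\cap\overline\Omega$ and on $\partial^+\Omega$ (a standard gluing argument using a cutoff and $|w|<1$ inside). Taking second variations gives
\[
Q^{\widetilde{\mathcal O}}_v(\psi):=\int_{\widetilde{\mathcal O}}|\nabla\psi|^2-\int_{\partial^0\widetilde{\mathcal O}}f'(v)\,\psi^2\ \ge\ 0
\quad\text{for all }\psi\in C^1_c(\overline{\widetilde{\mathcal O}})\ \text{with}\ \psi=0\ \text{on}\ \mathcal C .
\]

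Finally, given $\xi\in C^1(\overline{\re^{2m+1}_+})$ with compact support and $\xi=0$ on $\mathcal C\times[0,\infty)$, I would decompose $\xi=\xi_++\xi_-$ with $\xi_\pm=\tfrac12(\xi\pm\xi\circ\sigma)$; both lie in $C^1_c$, both vanish on $\mathcal C\times[0,\infty)$, and $\xi_+\circ\sigma=\xi_+$, $\xi_-\circ\sigma=-\xi_-$. Since $v$ is odd under $\sigma$ and $f'$ is even (as $f$ is odd), $f'(v)$ is $\sigma$-invariant, so the mixed term $\int_{\re^{2m+1}_+}\nabla\xi_+\cdot\nabla\xi_- - \int_{\re^{2m}}f'(v)\,\xi_+\xi_-$ vanishes and $Q_v(\xi)=Q_v(\xi_+)+Q_v(\xi_-)$; the same invariance gives $Q_v(\xi_\pm)=2\,Q^{\widetilde{\mathcal O}}_v(\xi_\pm|_{\widetilde{\mathcal O}})$. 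Since $\xi_\pm|_{\widetilde{\mathcal O}}$ is admissible in the displayed inequality, $Q_v(\xi)\ge0$. The main obstacles I expect are the compactness step — uniform estimates up to the edge $\mathcal C\cap\partial^0\widetilde{\mathcal O}$ and the proof that the limit is still a minimizer rather than merely a solution — and the Brezis--Oswald uniqueness with mixed Dirichlet/Neumann conditions, which is exactly the place where \eqref{h3} is indispensable, as it is what forces the minimizer to be $O(m)\times O(m)$-symmetric and hence makes the saddle solution a minimizer in $\widetilde{\mathcal O}$.
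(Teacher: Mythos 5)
Your construction of the minimizer and passage to the limit are in the same spirit as the paper's, but the two proofs diverge in a way that produces a genuine gap in your version. The paper works from the outset in the space $\widetilde{H}_0^1$ of $H^1$ functions depending only on $(s,t,\lambda)$: minimizing there gives a critical point that solves the full Euler--Lagrange equation by symmetric criticality, and the saddle symmetry of $u$ is automatic. You instead minimize over all of $H^1$ and then propose to recover $O(m)\times O(m)$-symmetry of $v_{R,L}$ by a Brezis--Oswald uniqueness theorem. But this uniqueness requires \eqref{h3}, whereas the first assertion of the theorem (existence of a saddle solution, with its built-in symmetry $u=u(s,t)$) is claimed under \eqref{h1} and \eqref{h2} alone. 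So the plan as written cannot establish the first half of the statement under the stated hypotheses; the paper sidesteps this entirely by restricting the minimization class.

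The Brezis--Oswald step itself is also not a routine adaptation. For the extension problem the reaction is a Steklov-type boundary nonlinearity, the domain $\widetilde{\mathcal O}\cap C_{R,L}$ has edges where the Dirichlet part $\mathcal C\times(0,L)$ meets the Neumann parts, and the Picone-type computation requires that the quotients $v_2/v_1$ and $v_1/v_2$ (which enter through the test functions $(v_1^2-v_2^2)/v_1$, etc.) be controlled up to the Dirichlet portion and across the corners. This needs uniform-up-to-the-boundary $C^1$ regularity and Hopf-type lower bounds in a non-smooth mixed-boundary domain, which you do not address. Moreover, passing from ``$v_{R,L}$ minimizes on bounded cylinders'' to ``the limit $w$ is a global minimizer in $\widetilde{\mathcal O}$'' is itself a delicate gluing step that deserves more than one sentence.

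The paper's route to the second assertion is genuinely simpler and avoids all of this: it does not use minimality at all. From \eqref{h3} one has $f'(\tau)\le f(\tau)/\tau$ for $\tau\in(0,1)$, so $v$ satisfies $-\Delta v=0$ in $\widetilde{\mathcal O}$ and $-\partial_\lambda v\ge f'(v)v$ on $\mathcal O\times\{0\}$; multiplying $-\Delta v=0$ by $\xi^2/v$, integrating by parts, and applying Cauchy--Schwarz directly gives $Q_v(\xi)\ge 0$ for $\xi$ compactly supported in $\widetilde{\mathcal O}\cup\partial^0\widetilde{\mathcal O}$, then by approximation for $\xi$ vanishing on $\mathcal C\times[0,\infty)$ and supported in $\overline{\widetilde{\mathcal O}}$, and finally for general such $\xi$ by splitting $Q_v(\xi)$ as the sum of its contributions from the two sheets $\widetilde{\mathcal O}$ and $\sigma(\widetilde{\mathcal O})$ (your symmetric/antisymmetric decomposition $\xi=\xi_++\xi_-$ is correct but an unnecessary refinement: since $\xi$ already vanishes on $\mathcal C$, the direct split of the integral over the two half-regions suffices). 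If you want to salvage your argument, replace the Brezis--Oswald step by restricting the minimization to the $(s,t,\lambda)$-symmetric class as the paper does, and replace the minimality argument for stability by the semi-stability computation above.
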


We prove the existence of a saddle solution $u$ for problem
(\ref{eq1-saddle}), by proving the existence of a solution $v$ for
problem (\ref{eq2-saddle}), with the following properties:
\begin{enumerate}
\item $v$ depends only on the variables $s,\: t$ and $\lambda$. We
write $v=v(s,t,\lambda)$; \item $v>0$ for $s>t$; \item
$v(s,t,\lambda)=-v(t,s,\lambda)$.
\end{enumerate}

Using a variational technique we construct a solution $v$ for the following problem
$$\begin{cases}
\Delta v=0&\mbox{in} \; \widetilde{\mathcal O}\\
v>0&\mbox{in} \; \widetilde{{\mathcal O}}\\
v=0& \mbox{on}\; \mathcal C\times [0,+\infty)\\
-\partial_{\lambda} v=f(v)& \mbox{on}\; \mathcal O\times\{\lambda=0\}.\end{cases}$$
Then,
since $f$ is odd, by odd reflection with respect to
$\mathcal C\times [0,+\infty)$ we obtain a solution $v$ in the
whole space which satisfies properties (1), (2), (3). Clearly the
function $u(x)=v(x,0)$ is a saddle solution for problem
(\ref{eq1-saddle}). 

To prove this existence result, we will use the following non-sharp energy estimate for $v$.
Given $1/2\leq \gamma <1$, there exists $\varepsilon=\varepsilon(\gamma)>0$ such that
\begin{equation}\label{en-saddle}\mathcal E_{C_{S,S^{\gamma}}}(v)\leq CS^{2m-\varepsilon}.\end{equation}

In Theorem 1.7 of \cite{C-Cinti1}, Cabr\'e and the author establish the following sharp
energy estimates for saddle-shaped solutions,
$$\mathcal E_{C_{S,S}}(v)\leq CS^{2m-1}\log S.$$
Here,
\eqref{en-saddle} is not sharp, but it is enough to prove the
existence of a saddle solution.

For solutions of problem (\ref{eq2-saddle}) depending only on the coordinates $s,\;t$ and $\lambda$,
 problem (\ref{eq2-saddle}) becomes
\begin{equation}\label{eqst}
\begin{cases}
\displaystyle -(v_{ss}+v_{tt}+v_{\lambda\lambda})-(m-1)\left
(\frac{v_s}{s}+\frac{v_t}{t}\right )=0,& \mbox{in}\:\:\re^{2m+1}_+\\
\displaystyle -\partial_{\lambda}v=f(v) & \mbox{on}\:\:\partial\re^{2m+1}_+, \end{cases}
\end{equation}  while the energy functional
becomes
\begin{equation}\label{enerst}
{\mathcal E}(v,\Omega)=c_m \left\{\int_\Omega s^{m-1}t^{m-1}
\frac{1}{2} (v_s^2+v_t^2+v_\lambda^2)ds dt
d\lambda+\int_{\partial^0 \Omega}s^{m-1}t^{m-1}G(v) ds dt\right\},
\end{equation}
where $c_m$ is a positive constant depending only on $m$---here we
 have
 assumed that $\Omega\subset\re^{2m+1}$ is radially symmetric
in the first $m$ variables and also in the last $m$ variables, and
we have abused notation by identifying $\Omega$ with its
projection in the $(s,t,\lambda)$ variables.

In section 5, we prove the existence and
monotonicity properties of a maximal saddle solution.

To establish these results, we need to introduce a new nonlocal
operator $D_{H,\varphi}$, which is the square root of the Laplacian for functions
defined in domains $H \subset \re^n$ which do not vanish on $\partial
H$. We introduce this operator and we establish maximum
principles for it, in section 4.

We define the new variables
\begin{equation}\label{defyz}
\begin{cases}
\displaystyle y  =   \frac{s+t}{\sqrt{2}} \vspace{1em}\\
\displaystyle z =  \frac{s-t}{\sqrt{2}}.
\end{cases}
\end{equation}
Note that $|z|\leq y$ and that we may write the Simons cone as
${\mathcal C}=\{z=0\}$.

The following theorem concerns the existence and
monotonicity properties of a maximal saddle solution.

\begin{teo}\label{maximal-u}
  Let $f$ satisfy conditions \eqref{h1}, \eqref{h2}, and \eqref{h3}.

Then, there exists a saddle solution $\usup$ of $(-\Delta)^{1/2}\usup=f(\usup)$ in $\re^{2m}$, with $|u|< 1$, which is maximal in the following sense. For every solution $u$ of $(-\Delta)^{1/2}u=f(u)$ in $\re^{2m}$, vanishing on the Simons cone and such that $u$ has the same sign as $s-t$, we have
$$0<u\leq\usup\quad \mbox{in}\;\;\mathcal O.$$
As a consequence, we also have
$$0\leq|u|\leq|\usup|\quad \mbox{in}\;\;\re^{2m}.$$

In addition, if $\vsup$ is the harmonic extension of $\usup$ in $\re^{2m+1}_+$,
then $\vsup$ satisfies:
\begin{enumerate}
 \item [(a)] $\partial_s \vsup \geq 0$ in $\overline{\re^{2m+1}_+}$. Furthermore $\partial_s \vsup >0$ in $\overline{\re^{2m+1}_+}\setminus\{s=0\}$      and $\partial_s\vsup=0$ in $\{s=0\}$;
\item [(b)] $\partial_t \vsup \leq 0$ in $\overline{\re^{2m+1}_+}$. Furthermore $\partial_t \vsup <0$ in $\overline{\re^{2m+1}_+}\setminus\{t=0\}$      and $\partial_t\vsup=0$ in $\{t=0\}$;
\item [(c)] $\partial_z \vsup > 0$ in $\overline{\re^{2m+1}_+}\setminus\{0\}$;
\item [(d)] $\partial_y \vsup > 0$ in $\{s>t\}\times [0,+\infty)$.

As a consequence, for every direction $\partial_{\eta}=\alpha\partial_y-\beta \partial_t$, with $\alpha$ and $\beta$ positive constants, $\partial_{\eta}\vsup>0$ in $\{s>t>0\}\times [0,+\infty)$.\end{enumerate}
\end{teo}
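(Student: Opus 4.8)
The plan is to construct $\usup$ as a maximal element by a monotone iteration / supersolution scheme, working in the half-space formulation \eqref{eqst} and using the nonlocal operator $D_{H,\varphi}$ introduced in section 4. First I would build, for each cylinder $C_{R,L}$, the solution $v_{R,L}$ of the mixed Dirichlet--Neumann problem in $\widetilde{\mathcal O}\cap C_{R,L}$ with zero Dirichlet data on $\mathcal C\times[0,+\infty)$ and on the lateral/top boundary, as the minimizer of $\mathcal E$ among functions vanishing there; the non-sharp energy estimate \eqref{en-saddle} (choosing $L=S^\gamma$) together with $|v_{R,L}|\le 1$ gives the compactness needed to pass to the limit $R,L\to\infty$ and obtain a solution $\vsup$ of \eqref{eqst} in $\widetilde{\mathcal O}$ with the sign and oddness properties (1)--(3), hence a saddle solution $\usup$ after odd reflection. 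For maximality, given any competitor $u$ as in the statement with harmonic extension $v$, I would compare $v$ with $v_{R,L}$ on $\widetilde{\mathcal O}\cap C_{R,L}$: since $v>0=v_{R,L}$ on the Dirichlet part of the boundary, the comparison principle for $D_{H,\varphi}$ from section 4 (applied with $H=\mathcal O$, using hypothesis \eqref{h3} so that $f$ is, after subtracting a linear term, decreasing — or via a sliding argument on $w=v_{R,L}-v$) forces $v_{R,L}\ge v$ in the cylinder; letting $R,L\to\infty$ yields $\vsup\ge v$, hence $\usup\ge u$ on $\mathcal O$, and then $0<u\le\usup$ there. The bound $0\le|u|\le|\usup|$ in all of $\re^{2m}$ follows by the odd symmetry of both functions across $\mathcal C$.

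For the monotonicity properties (a)--(d) I would exploit that $\usup$ is maximal, which makes it canonical under the symmetries of the problem. The key observation is that the operator and the domain $\widetilde{\mathcal O}$ are invariant under reflection $s\mapsto -s$ (and $t\mapsto -t$), so $\vsup(-s,t,\lambda)$ is again a saddle-type solution with the same sign and, by maximality and the same comparison as above, must coincide with $\vsup$; thus $\vsup$ is even in $s$ and in $t$, giving $\partial_s\vsup=0$ on $\{s=0\}$ and $\partial_t\vsup=0$ on $\{t=0\}$. Differentiating the equation \eqref{eqst}, the function $\sigma:=\partial_s\vsup$ satisfies a linearized equation with a Neumann condition $-\partial_\lambda\sigma=f'(\vsup)\sigma$ on $\{\lambda=0\}$ and appropriate behaviour on $\{s=0\}$; to get the sign $\sigma\ge0$ I would argue again by maximality — translating or shrinking in the $s$-variable produces ordered solutions — or, more robustly, by a moving-planes / sliding argument in the $s$ direction: the family $\vsup(s+\tau,t,\lambda)$ is ordered for $\tau\ge0$ near the boundary, the comparison principle for $D_{H,\varphi}$ propagates the order inward, and differentiating in $\tau$ at $\tau=0$ gives $\partial_s\vsup\ge0$. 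The strict inequality $\partial_s\vsup>0$ off $\{s=0\}$ then follows from the strong maximum principle / Hopf lemma for the linearized problem (note $\sigma\not\equiv0$ since $\vsup$ is nonconstant), and symmetrically for (b).

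For (c), write $\partial_z\vsup=\tfrac{1}{\sqrt2}(\partial_s\vsup-\partial_t\vsup)$, which is $>0$ off the origin by (a)--(b) since the two one-sided vanishing sets $\{s=0\}$ and $\{t=0\}$ meet only at the origin. Property (d) is the most delicate and I expect it to be the main obstacle: $\partial_y\vsup=\tfrac{1}{\sqrt2}(\partial_s\vsup+\partial_t\vsup)$ is a sum of terms with opposite monotonicity behaviour near the axes, so (a)--(b) alone do not suffice. Here I would set up a genuine sliding argument in the $y$-direction inside $\{s>t\}\times[0,+\infty)$: the translated functions $\vsup$ evaluated at $(y+\tau,z,\lambda)$ are defined on a subdomain, agree with $\vsup$ in the limit, are ordered for $\tau\ge0$ near the Dirichlet boundary $\{z=0\}$ (where $\vsup=0$ and $\partial_z\vsup>0$ by (c)) and near $\{y=|z|\}$ (i.e. $\{s=0\}$ or $\{t=0\}$, using the even symmetry just established); the comparison principle for $D_{\mathcal O,\varphi}$ — crucially using \eqref{h3} to control the reaction term — propagates this ordering to the interior, and differentiation in $\tau$ gives $\partial_y\vsup\ge0$, upgraded to $>0$ by Hopf. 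The final consequence, positivity of $\partial_\eta\vsup=\alpha\partial_y\vsup-\beta\partial_t\vsup$ in $\{s>t>0\}\times[0,+\infty)$, is then immediate since $\partial_y\vsup>0$ and $-\partial_t\vsup>0$ there with $\alpha,\beta>0$. The technical heart of the whole argument — and where I expect the real work to lie — is verifying the hypotheses of the section-4 maximum principle for $D_{H,\varphi}$ along the curved, non-smooth portions of $\partial\mathcal O$ (the axes $\{s=0\}$, $\{t=0\}$ and the vertex at the origin), and handling the degeneracy of the weight $s^{m-1}t^{m-1}$ in \eqref{enerst} there.
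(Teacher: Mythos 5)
Your overall framework is in the right spirit — monotone iteration / comparison and then differentiating the equation — but two key steps in your proposal are incorrect and differ essentially from what the paper does.

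\textbf{1. The construction of $\usup$ and the maximality argument.} You propose to take $v_{R,L}$ as the minimizer of $\mathcal E$ with \emph{zero} Dirichlet data on $\mathcal C\times[0,+\infty)$ and on the lateral/top boundary; this is exactly the construction used in the proof of Theorem \ref{existence} and produces \emph{a} saddle solution, not the \emph{maximal} one. Your subsequent comparison is then backwards: if a competitor $v$ satisfies $v>0=v_{R,L}$ on the lateral boundary, the comparison principle forces $v\ge v_{R,L}$ there, not $v_{R,L}\ge v$. A minimizer with zero boundary data has no reason to dominate all admissible solutions. What the paper does instead (Lemma \ref{maxR}) is to start from the explicit supersolution barrier $u_b(z)=\min\{1,K u_0(z)\}$, which — by the regularity/gradient bound \eqref{grad} plus the choice of $K$ in \eqref{k} — dominates \emph{every} competitor's extension: $|v|\le u_b$. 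One imposes $u_b$ as boundary data on $\partial T_R$, sets $L=D_{T_R,u_b}+a$ with $a$ large and $g(w)=f(w)+aw$ increasing, and runs the decreasing iteration $L\usup_{R,j+1}=g(\usup_{R,j})$ from $\usup_{R,0}=u_b$. Maximality is then propagated inductively: $v\le\vsup_{R,j}$ implies $v\le\vsup_{R,j+1}$ via Lemma \ref{MP2} and Hopf's lemma. This supersolution-initiated, decreasing scheme is the missing ingredient; the zero-boundary-data minimizer approach cannot give maximality.

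\textbf{2. The monotonicity via sliding.} Your proposed sliding/moving-planes argument in $s$, $t$, and especially $y$ does not go through as stated, because translations in those variables are \emph{not} symmetries of problem \eqref{eqst}: the coefficients $(m-1)/s$ and $(m-1)/t$ (equivalently the weight $s^{m-1}t^{m-1}$) destroy translation invariance, so $\vsup(s+\tau,t,\lambda)$ is not a solution of the same equation and cannot be compared to $\vsup$ by the section-4 maximum principles. The paper avoids this by carrying the monotonicity through the iteration itself. The starting barrier $u_b=\min\{1,Ku_0((s-t)/\sqrt2)\}$ already satisfies $\partial_t u_b\le 0$ and $\partial_y u_b\ge 0$; differentiating the iteration equation in $t$ (Lemma \ref{monv_Rt}) and in $y$ (Lemma \ref{monv_Ry}) gives linear problems for $\partial_t\vsup_{R,j}$ and $\partial_y\vsup_{R,j}$ with zeroth-order coefficients $(m-1)/t^2\ge 0$ and $(m-1)/s^2\ge 0$, to which the maximum principle and Hopf's lemma apply, provided one checks the sign on $\partial T_R$ and uses the positivity of $g'$. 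Crucially, the $\partial_y$ equation contains the term $\frac{(m-1)(s^2-t^2)}{\sqrt2\,s^2t^2}\partial_t\vsup_{R,j}$, which has the right sign only because $\partial_t\vsup_{R,j}\le 0$ has \emph{already} been established — so (d) is not obtained by an independent sliding argument but by feeding (b) into the linearized equation for $\partial_y$. Your instinct that (d) is the delicate point is correct, but the resolution is via this algebraic interplay between the $s$- and $t$-derivatives inside the iteration, not a translation argument. Finally, $\partial_s\vsup=0$ on $\{s=0\}$ and $\partial_t\vsup=0$ on $\{t=0\}$ are obtained in the paper by multiplying the PDE by $t$ (resp.\ $s$) and letting $t\to0$ (resp.\ $s\to0$) using $C^2$ regularity, not by an evenness/reflection argument.
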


Theorem \ref{maximal-u} above is the analog of Theorem 1.7 in \cite{CT2} for reactions in the interior. In \cite{CT2} two important ingredients in the proof of the 
existence and monotonicity properties of the maximal saddle solution are the following.
Let $u^{(1)}$ be a saddle solution of $-\Delta u^{(1)}= f(u^{(1)})$ in $\re^{2m}$, with $f$ bistable, and let $u^{(1)}_0$ be the layer 
solution in dimension $n=1$ of $(-u^{(1)}_0)''=f(u^{(1)}_0)$ (whose existence is guaranteed by hypothesis \eqref{h2} on $f$).
Then
\begin{enumerate}
 \item [i)] $u^{(1)}_0(|s-t|/\sqrt 2)$ is a supersolution of $-\Delta u^{(1)}= f(u^{(1)})$ in $\mathcal O$;
\item [ii)]
\begin{equation}\label{bound-mo}
|u^{(1)}(x)|\leq \left|u^{(1)}_0\left(d(x,\mathcal C)\right)\right|=\left|u^{(1)}_0\left(\frac{|s-t|}{\sqrt 2}\right)\right|\quad \mbox{for every}\:\:x\in \re^{2m},
\end{equation}
where $d(\cdot,\mathcal C)$ denotes the distance to the Simons cone.
\end{enumerate}
The following proposition  establishes the analog for boundary reactions of point i) above.
\begin{prop}\label{super-ext}
Let $f$ satisfy hypothesis \eqref{h1}, \eqref{h2}, \eqref{h3}. Let $u_0$ be the layer solution, vanishing at the origin, of problem \eqref{eq1-saddle} 
in $\re$ and let $v_0$ be its harmonic extension in $\re^{2}_+$.

Then, the function $\displaystyle v_0(z,\lambda)=v_0\left(\frac{s-t}{\sqrt 2},\lambda\right)$ satisfies
\begin{equation*}
\begin{cases}
 -\Delta v_0\geq 0 &\mbox{in} \:\:\widetilde{\mathcal O}\\
-\partial_{\lambda}v_0\geq f(v_0)   &\mbox{on} \:\:{\mathcal O}\times \{0\}.
\end{cases}
\end{equation*}
\end{prop}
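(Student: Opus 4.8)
The key is to exploit the defining equation \eqref{eq-layer} of the layer solution $u_0$ together with the geometry of the change of variables $z=(s-t)/\sqrt2$. Write $w(x,\lambda):=v_0(z,\lambda)$ where $z=z(x)=(s-t)/\sqrt2$. Since $v_0$ is harmonic in $\re^2_+$ in the variables $(z,\lambda)$, a direct computation in $\re^{2m+1}_+$ gives
\begin{equation*}
\Delta w = (v_0)_{zz}\,|\nabla z|^2 + (v_0)_z\,\Delta z + (v_0)_{\lambda\lambda}
= (v_0)_z\,\Delta z,
\end{equation*}
because $|\nabla z|^2=1$ (the vector $\nabla z$ has the form $\tfrac1{\sqrt2}(x_1/s,\dots,x_m/s,-x_{m+1}/t,\dots,-x_{2m}/t)$, a unit vector away from $\{s=0\}\cup\{t=0\}$) and $(v_0)_{zz}+(v_0)_{\lambda\lambda}=0$ by harmonicity of $v_0$. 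Thus the whole interior inequality reduces to checking the sign of $(v_0)_z\,\Delta z$. One computes $\Delta z = \tfrac{m-1}{\sqrt2}\big(\tfrac1s-\tfrac1t\big)$, which in $\widetilde{\mathcal O}=\{s>t\}$ is $\le 0$. Since $v_0$ is increasing, $(v_0)_z>0$, so $\Delta w\le 0$, i.e.\ $-\Delta w\ge 0$ in $\widetilde{\mathcal O}$, as claimed. (Near the singular sets $\{s=0\}$ and $\{t=0\}$ one argues that $\widetilde{\mathcal O}$ stays away from $\{s=0\}$ and that the inequality extends by continuity/as distributions up to $\{t=0\}$, where in fact $z$ is smooth in $x$ as long as $s>0$.)

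For the boundary condition, evaluate at $\lambda=0$: since $u_0$ solves $(-\Delta)^{1/2}u_0=f(u_0)$, its harmonic extension satisfies $-\partial_\lambda v_0(\zeta,0)=f(v_0(\zeta,0))=f(u_0(\zeta))$ for every $\zeta\in\re$. Applying this with $\zeta=z=(s-t)/\sqrt2$ gives, on $\mathcal O\times\{0\}$,
\begin{equation*}
-\partial_\lambda w(x,0) = -\partial_\lambda v_0\!\left(\tfrac{s-t}{\sqrt2},0\right) = f\!\left(v_0\!\left(\tfrac{s-t}{\sqrt2},0\right)\right) = f(w(x,0)),
\end{equation*}
so the Neumann inequality holds with equality. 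Hence $w=v_0(z,\lambda)$ is the desired supersolution.

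The only genuinely delicate point is the behaviour of $z(x)$ (hence of $w$) across $\{t=0\}$, where $\partial\widetilde{\mathcal O}$ is not the free boundary: there $z=s/\sqrt2$ is perfectly smooth in $x$, so the computation above is valid; the issue is rather that $\Delta z$ has a $-\,(m-1)/(\sqrt2\,t)$ term blowing up, but this term carries a favorable sign in $\mathcal O$, so the inequality is only reinforced, and one should phrase the conclusion so that the differential inequality is understood in the weak (distributional) sense, or restricted to $\{s>t>0\}$ and then passed to the closure. I expect this sign-tracking near $\{s=0\}\cup\{t=0\}$, together with making the regularity of $v_0(z,\lambda)$ precise (it is $C^{1,\alpha}$ up to $\{\lambda=0\}$ by the regularity theory for \eqref{eq2-saddle}, hence $w$ inherits the regularity needed for the maximum-principle machinery of Section 4), to be the main technical obstacle; the core identity $\Delta w=(v_0)_z\,\Delta z$ is otherwise immediate.
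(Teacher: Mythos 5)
Your core computation is correct and matches what the paper calls its ``direct computation'': for $w(x,\lambda)=v_0\big((s-t)/\sqrt2,\lambda\big)$ one has $|\nabla z|^2=1$, $(v_0)_{zz}+(v_0)_{\lambda\lambda}=0$, and $\Delta z=\tfrac{m-1}{\sqrt2}\bigl(\tfrac1s-\tfrac1t\bigr)$, so $\Delta w=(v_0)_z\,\Delta z\le 0$ in $\{s>t>0\}$ because $(v_0)_z>0$ and $1/s<1/t$ there; the Neumann condition is inherited from the layer problem with equality. You have in fact spelled out the computation more explicitly than the paper does.

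The genuine gap is your treatment of the set $\{t=0\}$ (and, separately, the dimension $m=1$). Your claim that ``there $z=s/\sqrt2$ is perfectly smooth in $x$'' is false: near $\{t=0\}$ one has $z=(s-|x''|)/\sqrt2$ with $x''=(x_{m+1},\dots,x_{2m})$, and $|x''|$ is only Lipschitz (not $C^1$) at $x''=0$, so the pointwise identity $\Delta w=(v_0)_z\Delta z$ does not hold across $\{t=0\}$ and cannot be ``extended by continuity.'' The paper closes this gap in two different ways depending on the dimension. For $m\ge 2$ (i.e.\ $2m+1\ge 5$) the set $\{t=0\}$ has codimension $m\ge 2$ in $\re^{2m}$ and is removable for the distributional inequality: one multiplies by a cut-off $\xi_\varepsilon$ vanishing near $\{t<\varepsilon/2\}$, integrates by parts, and shows the extra term vanishes as $\varepsilon\to 0$ exactly as in the paper's equation (\ref{cut}). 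For $m=1$ (i.e.\ $2m+1=3$) your favorable singular term $-\tfrac{m-1}{\sqrt2\,t}(v_0)_z$ is identically zero, the cut-off argument fails because $\{t=0\}$ is codimension $1$, and one must instead check the concavity of the corner of $w$ across $\{t=0\}$: the paper verifies that the outer flux $-\partial_t v_0\big((s-t)/\sqrt2,\lambda\big)=\tfrac1{\sqrt2}(v_0)_z>0$, which is what makes $w$ a supersolution in the distributional sense across that set. Without this dimension split and the correct handling at $\{t=0\}$, the proof is incomplete.
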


Concerning point ii) above, estimate \eqref{bound-mo} follows by an important gradient bound of 
Modica \cite{Mo} for the classical equation 
$-\Delta u= f(u)$ in $\re^n$. 

In the fractional case Cabr\'e and Sol\`a-Morales \cite{C-SM} and Cabr\'e and Sire \cite{C-Si1} established a non-local version of the Modica estimate in dimension $n=1$, 
the analog estimate for dimentsions $n>1$ is still an open problem.
Therefore, we are not able to deduce the analog of \eqref{bound-mo} for solutions of the equation $(-\Delta )^{1/2}u=f(u)$ in $\re^{2m}$.
For this reason, to give an upper barrier for saddle solutions, that at the same time is a supersolution, we consider the function $\min\{Kv_0(|s-t|/\sqrt 2,\lambda),1\}$ 
where $K\geq 1$ is a large constant depending only on $n$, $||u||_{\infty}$, and $f$. Proposition \ref{super-ext} implies that this function is a 
supersolution in $\widetilde{\mathcal O}$. Moreover, we will show that there exists $K\geq 1$, depending only on $n$, $||u||_{\infty}$, and $f$, 
such that if $v$ is a bounded solution of problem \eqref{eq2-saddle}, vanishing on $\mathcal C\times [0,+\infty)$, then
\begin{equation}\label{bound-super}|v(x,\lambda)|\leq  \min\{Kv_0(|s-t|/\sqrt 2,\lambda),1\},\quad \mbox{for every}\:\:(x,\lambda)\in \overline{\re_+^{2m+1}}.\end{equation}
Estimate \eqref{bound-super} follows by regularity results established in \cite{C-SM}. 

In section 6, we prove the following theorem concerning the asymptotic behaviour at infinity for a class of solutions which contains saddle-shaped solutions.

\begin{teo}\label{asym}
Let $f$ satisfy conditions  \eqref{h1}, \eqref{h2}, and \eqref{h3}, and
let $u$ be a bounded solution of $(-\Delta)^{1/2} u=f(u)$
in $\re^{2m}$ such that $u\equiv 0$ on~${\mathcal C}$, $u>0$ in
${\mathcal O}=\{s>t\}$ and $u$ is odd with respect to ${\mathcal
C}$.

Then, denoting $U(x):=u_0((s-t)/\sqrt{2})=u_0(z)$ we have,
\begin{equation}
u(x)-U(x)\rightarrow 0 \quad\mbox{ and } \quad\nabla u(x)-\nabla
 U(x)\rightarrow 0,
\end{equation} uniformly as $|x|\rightarrow\infty.$ That is,
\begin{equation}\label{unif}
||{u-U}||_{L^{\infty}(\re^{2m}\setminus B_R)}+||\nabla
  u- \nabla U||_{L^{\infty}(\re^{2m}\setminus B_R)}\rightarrow 0\, \text{
  as }\, R\rightarrow\infty.
\end{equation}
\end{teo}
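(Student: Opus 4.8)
The plan is to translate the problem to the extension and argue by a compactness/blow-down contradiction, exactly as in the classical case of Cabré--Terra \cite{CT2}, replacing interior tools with their boundary-reaction analogues. Let $v$ be the harmonic extension of $u$ in $\re^{2m+1}_+$ and let $V(x,\lambda)=v_0((s-t)/\sqrt2,\lambda)$ be the extension of $U$. Since $u$ vanishes on $\mathcal C$, is odd with respect to $\mathcal C$, and positive in $\mathcal O$, estimate \eqref{bound-super} gives $|v(x,\lambda)|\le \min\{Kv_0(|s-t|/\sqrt2,\lambda),1\}$; combined with the fact that $v_0(z,\lambda)\to\pm1$ as $z\to\pm\infty$, this already shows that $v-V\to0$ along any sequence of points whose distance to the cone tends to infinity. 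So the only issue is the behaviour of $v$ near the cone but far from the origin, i.e. points $x^k\in\re^{2m}$ with $|x^k|\to\infty$ but $\mathrm{dist}(x^k,\mathcal C)$ bounded.

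For such a sequence I would consider the translates $v^k(x,\lambda):=v(x+x^k,\lambda)$ (and similarly $V^k$). By interior and boundary elliptic estimates for \eqref{eq2-saddle} (the Schauder-type estimates up to $\{\lambda=0\}$ from \cite{C-SM,CS}), together with the uniform bound $|v|\le1$, the $v^k$ are bounded in $C^{2,\alpha}_{\mathrm{loc}}(\overline{\re^{2m+1}_+})$, so up to a subsequence $v^k\to v^\infty$ in $C^2_{\mathrm{loc}}$, and $v^\infty$ solves the limiting equation. The key geometric point is that, since $|x^k|\to\infty$, the rescaled Simons cones $\mathcal C-x^k$ converge (locally, after passing to a subsequence and rotating so that the nearest point of $\mathcal C$ to $x^k$ sits at a fixed location) to a hyperplane $\Pi$ through the limiting base point; this uses that $\mathcal C$ is a smooth hypersurface away from the origin with curvature decaying like $1/|x|$. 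The limit $v^\infty$ is then a bounded solution of $\Delta v^\infty=0$ in a half-space, $-\partial_\lambda v^\infty=f(v^\infty)$ on the boundary, with $v^\infty=0$ on the trace of $\Pi$, $v^\infty\ge0$ on one side, and — crucially — monotone in the direction normal to $\Pi$. Here I invoke the monotonicity of $v$ transverse to the cone; the clean way is to use the maximal saddle solution $\vsup$ from Theorem \ref{maximal-u}, whose translates have $\partial_z\vsup>0$, together with the sandwich $|v|\le|\vsup|$ and $|v|\le \min\{Kv_0,1\}$, to force $v^\infty$ itself to be the (unique up to translation) one-dimensional layer $v_0$ in the normal variable; uniqueness of the layer is guaranteed by Theorem \ref{esistenza-layer} since $f'(\pm1)<0$ follows from \eqref{h3} together with $f(\pm1)=0$. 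Matching the zero set ($v^\infty=0$ on $\Pi$) pins down the translation, so $v^\infty=V^\infty$, which contradicts $|v^k-V^k|\not\to0$ at the base point. The same argument applied to $\nabla v^k$, using $C^2_{\mathrm{loc}}$ convergence, yields $\nabla v^k-\nabla V^k\to0$, and restricting to $\lambda=0$ gives \eqref{unif}.

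The main obstacle is the absence of a fractional Modica estimate in dimension $n>1$, which in the classical proof is what makes the sandwich and the identification of the blow-down limit painless. The paper circumvents this via \eqref{bound-super} (the barrier $\min\{Kv_0,1\}$) and the comparison $|v|\le|\vsup|$ with the maximal solution; so the delicate step is to show that these two one-sided bounds, plus the strict monotonicity $\partial_z\vsup>0$, are actually enough to trap the limit $v^\infty$ between two translates of $v_0$ that must coincide — i.e. that no "plateau" or slower transition layer can survive in the limit. This is where I would spend most of the effort: showing that $v^\infty$ is nonconstant (it cannot be identically $0$ because $v>0$ strictly in $\mathcal O$ and, by the Hopf lemma on the extension problem, $\partial_z v>0$ on $\mathcal C$, a lower bound that persists under translation), hence $v^\infty$ is a genuine layer solution in the normal direction and therefore equals $v_0(\cdot\,,\lambda)$ after translation. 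Everything else — the compactness, the flattening of the cone at infinity, the passage to the limit in the Neumann condition — is routine given the regularity theory of \cite{C-SM,CS} and the results already proved in the excerpt.
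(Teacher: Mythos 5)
Your overall plan---work in the extension, translate by the points $x_k$, extract a $C^2_{\mathrm{loc}}$ limit, and classify the limit---is indeed the structure of the paper's proof. But two central steps are either wrong or unjustified as you have written them, and in both cases the missing ingredient is a Liouville-type classification theorem that you never invoke.

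First, in the regime $\mathrm{dist}(x_k,\mathcal C)\to\infty$, you claim that the barrier $|v|\le\min\{Kv_0(|z|,\lambda),1\}$ together with $v_0\to\pm1$ already forces $v-V\to0$. It does not: that estimate is an \emph{upper} bound on $|v|$, so once $z_k\to\infty$ it merely says $|v(x_k,\lambda)|\le1$, with no lower bound pushing $v$ toward $1$. What the paper actually does is translate to obtain a nonnegative bounded limit $w$ solving $\Delta w=0$ in $\re^{2m+1}_+$ with $-\partial_\lambda w=f(w)$, apply the half-space Liouville result (Theorem \ref{yanyanli} of Li--Zhang, via Corollary \ref{yan}) to conclude $w\equiv0$ or $w\equiv1$, and then use the stability of $v$ in $\widetilde{\mathcal O}$ from Theorem \ref{existence} (stability passes to the limit; a nontrivial $w\equiv0$ would be unstable since $f'(0)>0$) to exclude $w\equiv0$. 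Without this Liouville step, there is no way to conclude $w\equiv1$.

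Second, near the cone you want the blow-down limit $v^\infty$ to be monotone in the direction normal to the limiting hyperplane $\Pi$, and you propose to deduce this from $|v|\le|\vsup|$ together with $\partial_z\vsup>0$. But a two-sided pointwise bound by a monotone function does not make a function monotone, and the sliding/trapping argument you sketch for it has no clear mechanism; you yourself flag this as ``where I would spend most of the effort,'' and indeed it does not go through as stated. The paper sidesteps all of this: after flattening the cone, the limit $w$ solves the quarter-space problem of Theorem \ref{tan} (Cabr\'e--Tan), whose moving-planes proof yields directly that $w$ depends only on the normal variable and $\lambda$, with no monotonicity hypothesis needed. Nontriviality of $w$ again comes from stability (of $v$ under perturbations vanishing on $\mathcal C\times\re^+$). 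Once $w$ is one-dimensional and positive on one side, uniqueness of the layer identifies it with $v_0$. So the two quarter/half-space Liouville theorems, plus the stability inheritance, are the engine of the proof; the barrier \eqref{bound-super} and the maximal solution $\vsup$, which you try to make carry the argument, are used elsewhere in the paper but are not what drives Theorem \ref{asym}.
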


Our proof of Theorem \ref{asym} follows the one given by Cabr\'e and Terra in \cite{CT2}, and uses a compactness argument based on translations of the solutions, combined with two crucial Liouville-type results for nonlinear equations in the half-space and in a quarter of space.

Finally, in section 7 we establish that saddle-shaped solutions are unstable in dimension $2m=4$ and $2m=6$.

\begin{teo}\label{uns6} Let $f$ satisfy conditions \eqref{h1}, \eqref{h2}, \eqref{h3}.
Then, every bounded solution $u$ of $(-\Delta)^{1/2} u=f(u)$
in $\re^{2m}$ such that $u=0$ on the Simons cone ${\mathcal
C}=\{s=t\}$ and $u$ has the same sign as $s-t$, is unstable in
dimension ${2m=4}$ and $2m=6$.\end{teo}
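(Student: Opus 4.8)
The plan is to construct an explicit compactly supported (in $\overline{\re^{2m+1}_+}$) test function $\xi$ for which the quadratic form $Q_v(\xi)$ defined in \eqref{Q_v} is strictly negative, where $v$ is the harmonic extension of $u$. The natural candidate, following Cabr\'e--Terra \cite{CT2} in the interior case, is the function that controls the sign of the solution along the direction of the Simons cone: set $\xi = \varphi\,\partial_z v$ (equivalently $\xi=\varphi\,(\partial_s v-\partial_t v)/\sqrt2$, i.e. a combination of $\partial_s v$ and $\partial_t v$), where $\varphi=\varphi(s,t)$ is a cutoff to be chosen. The first key step is to record that $\partial_z v$ is itself a solution of the linearized problem: differentiating \eqref{eqst} (or rather its expression in the $y,z,\lambda$ variables) one finds that $\psi:=\partial_z v$ satisfies $\Delta\psi + (\text{first order terms})=0$ in $\re^{2m+1}_+$ with $-\partial_\lambda\psi=f'(v)\psi$ on the boundary, but with an \emph{extra negative zeroth-order term} coming from the $(m-1)(v_s/s+v_t/t)$ geometry of the cone. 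Concretely, writing the equation in $(y,z,\lambda)$, the coefficient $s^{m-1}t^{m-1}$ in the weighted energy \eqref{enerst} produces, after the change of variables and the differentiation in $z$, a term of the form $-(m-1)\,\frac{2}{y^2-z^2}\,\psi$ (up to the explicit constant), which is the mechanism making the cone ``unstable'' in low dimensions.

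The second step is the standard computation: for $\xi=\varphi\psi$ with $\psi$ a solution of the linearized equation-with-extra-term, an integration by parts gives
\begin{equation*}
Q_v(\varphi\psi)=\int_{\re^{2m+1}_+}\psi^2|\nabla\varphi|^2 \;-\;\int_{\re^{2m+1}_+} c(y,z)\,\varphi^2\psi^2,
\end{equation*}
where $c(y,z)>0$ is the coefficient of the extra zeroth-order term (in the weighted measure $s^{m-1}t^{m-1}\,ds\,dt\,d\lambda$, or $(y^2-z^2)^{m-1}$ times $dy\,dz\,d\lambda$). Here I use that $\psi=\partial_z v>0$ away from the origin by Theorem \ref{maximal-u}(c), so $\psi^2>0$, and that $\varphi\psi$ vanishes on $\mathcal C\times[0,\infty)$ only if $\varphi$ does — but note $\psi$ itself does \emph{not} vanish on the cone, so one must either restrict the cutoff or, better, use that the admissible class in Definition \ref{stable} allows $\xi$ supported anywhere in $\overline{\re^{2m+1}_+}$ (perturbations need not vanish on $\partial^0\Omega$ nor on the cone). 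Thus it suffices to exhibit $\varphi\in C_0^\infty$ with $\int \psi^2|\nabla\varphi|^2<\int c\,\varphi^2\psi^2$.

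The third and decisive step is the choice of $\varphi$ and the dimension count. One takes $\varphi$ to be a logarithmic cutoff: $\varphi\equiv1$ on $B_R$, $\varphi=\log(R^2/|(x,\lambda)|)/\log R$ on the annulus $B_{R^2}\setminus B_R$ in the variables $(x,\lambda)$ (or a one-dimensional cutoff in the $\log$ scale adapted to the half-space), and $0$ outside. Then $|\nabla\varphi|^2\sim (|(x,\lambda)|\log R)^{-2}$ on the annulus, and using the asymptotic behaviour $\psi=\partial_z v\to u_0'(z)$ from Theorem \ref{asym} (so $\psi$ decays like the derivative of the layer, concentrated near $\mathcal C$), one estimates $\int\psi^2|\nabla\varphi|^2\to0$ as $R\to\infty$ — \emph{provided} the weight $(y^2-z^2)^{m-1}$ integrated against $\psi^2$ over the annulus grows slowly enough, which is exactly where $2m=4,6$ enters and $2m\geq8$ fails. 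Meanwhile $\int c\,\varphi^2\psi^2\geq\int_{B_R} c\,\psi^2>0$ is bounded below by a positive constant independent of $R$ (again since $\psi>0$ and $c>0$), because the integrand is a fixed positive function near the cone. Hence for $R$ large, $Q_v(\varphi\psi)<0$, proving instability. The main obstacle is the quantitative estimate of $\int\psi^2|\nabla\varphi|^2$: one must combine the \emph{interior} decay of $\psi$ in $\lambda$ (from harmonicity and boundedness, $\psi$ and its gradient decay in $\lambda$), its concentration near $z=0$ (from Theorem \ref{asym} and the upper barrier \eqref{bound-super}), and the polynomial weight $(y^2-z^2)^{m-1}\sim y^{2m-2}$ near the cone, to verify that the resulting integral over the logarithmic annulus in $\re^{2m+1}_+$ is $O(1/\log R)\to0$ precisely when $2m-2\le 4$, i.e. $2m\le6$. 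This balancing of the weight exponent against the logarithmic gain is the heart of the argument and the place where the restriction to $2m=4,6$ is unavoidable with this test function.
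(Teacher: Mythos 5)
Your broad strategy — take $\xi = \varphi\,\partial_z\vsup$ for the (maximal) saddle solution, exploit the negative zeroth-order term in the linearized equation for $\partial_z\vsup$, and count dimensions — is the right one and matches the paper's. But the details contain several gaps, the most serious being your choice of cutoff and the missing terms in the key identity.

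First, you never reduce the general solution $u$ to the maximal one $\usup$. You invoke Theorem \ref{maximal-u}(c) to get $\partial_z v > 0$, but that property is only proved for the maximal solution $\vsup$, not for the harmonic extension $v$ of an arbitrary $u$ satisfying the hypotheses. The paper first proves instability of $\vsup$, then transfers it to every such $v$ via Lemma \ref{lemma-ins} (using that $f'$ is decreasing and $|v| \le |\vsup|$). This reduction is needed, not optional.

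Second, the identity you write for $Q_v(\varphi\psi)$ is incomplete. Differentiating the equation \eqref{eqyz} in $z$ produces, besides the zeroth-order term $-\frac{2(m-1)}{y^2-z^2}\vsup_z$ you have in mind, a genuine cross term $\frac{4(m-1)z}{(y^2-z^2)^2}\bigl(y\vsup_y - z\vsup_z\bigr)$ coupling $\vsup_y$ and $\vsup_z$. After substitution, the quadratic form contains an integral of $\vsup_y\vsup_z$ which is \emph{not} sign-definite, and must be shown to be negligible. The paper does this by combining the asymptotic result (Theorem \ref{asym} gives $\vsup_y \to 0$ uniformly far from the origin), the sign $\vsup_z \ge 0$, and the rescaling. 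Your proposal drops this term entirely.

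Third, and decisively, the logarithmic annulus cutoff does not deliver what you claim. Because the energy carries the weight $(y^2-z^2)^{m-1}\sim y^{2m-2}$ near the cone, the gradient contribution on the annulus $\{R < |(x,\lambda)| < R^2\}$ behaves, after the $z$- and $\lambda$-integrals converge, like $\int_R^{R^2} y^{2m-4}(\log R)^{-2}\,dy$, which tends to $+\infty$ for every $m\ge 2$ — including $m=2,3$. Symmetrically, $\int_{B_R} c\,\psi^2\,(y^2-z^2)^{m-1}$ also grows polynomially in $R$; it is not bounded below by a constant independent of $R$ as you claim. Both sides blow up, and the sign of the difference is not determined by a logarithmic gain. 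What actually closes the argument in the paper is the self-similar rescaling $\eta_1^a(y)=\phi(y/a)$ (together with a $\lambda$-cutoff $\eta_2$), dividing by $a^{2m-3}$, using Theorem \ref{asym} to replace $\vsup_z(a\rho,z,\lambda)$ by $\partial_z v_0(z,\lambda)$ in the limit, and then applying Hardy's inequality to the one-dimensional integral $\int \rho^{2(m-1)}\bigl(\phi_\rho^2 - \tfrac{2(m-1)}{\rho^2}\phi^2\bigr)d\rho$. The sharp constant is $\tfrac{(2m-3)^2}{4}$, and the inequality $2(m-1) > \tfrac{(2m-3)^2}{4}$, i.e.\ $n^2 - 10n + 17 < 0$ with $n=2m$, is exactly what yields $n\in\{4,6\}$. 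Your proposal names neither the Hardy constant nor this quadratic inequality, so the restriction to $2m=4,6$ is asserted rather than derived.
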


Instability in dimension $2m=2$ follows by a result of Cabr\'e and Sol\`a Morales \cite{C-SM} which asserts that every stable solution of \eqref{eq1-saddle} in dimension $n=2$ is one-dimensional. This is the analog of the conjecture of De Giorgi in dimension $n=2$ for the half-Laplacian.

In \cite{CT}, Cabr\'e and Terra proved instability in dimension $2m=4$ for saddle-shaped solutions of the classical equation $-\Delta u=f(u)$ in $\re^4$. A crucial ingredient in the proof of this result is the pointwise estimate \eqref{bound-mo}.

However, in dimension $2m=6$, this estimate is not enough to prove instability and thus Cabr\'e and Terra used a more precise argument, based on some monotonicity properties and asymptotic behaviour of a maximal saddle solution.

Since, as said before, we cannot prove the analog of \eqref{bound-mo} for solutions of the equation $(-\Delta )^{1/2}u=f(u)$,
 here we follow the argument introduced by Cabr\'e and Terra in dimension $2m=6$, both for the case $2m=4$ and $2m=6$.

Using this approach, the crucial ingredients in the proof of Theorem \ref{uns6} are:
\begin{itemize}
\item [i)] the equation satisfied by $\vsup_z$, where $\vsup$ is the harmonic extension of the maximal saddle solution $\usup$ in $\re^{2m+1}_+$;
\item [ii)] a monotonicity property of $\vsup$;
\item [iii)] the asymptotic behaviour at infinity of $\vsup$.
\end{itemize}

The paper is organized as follows:
\begin{itemize}
\item In section 2 we prove Theorem \ref{existence} concerning the existence of a saddle solution for the equation \eqref{eq1-saddle} in every dimension $2m$.
\item In section 3, we give a supersolution and a subsolution for the square root of the Laplacian in a domain $H\subset \re^n$. In particular we prove Proposition \ref{super-ext}.
\item In section 4, we introduce the operator $D_{H,\varphi}$ and we establish maximum principles for it.
\item In section 5, we prove the existence of a maximal saddle solution $\usup$  and its monotonicity properties (Theorem \ref{maximal-u}).
\item In section 6, we prove Theorem \ref{asym}, concerning the asymptotic behaviour of saddle solutions.
\item In section 7, we prove Theorem \ref{uns6} about the instability of saddle solutions in dimensions $2m=4$ and $2m=6$.
\end{itemize}

\section{Existence of a saddle solution in $\re^{2m}$}

In this section we prove the existence of a saddle solution $u$
for problem (\ref{eq1-saddle}), by proving the existence of a solution
$v$ for problem (\ref{eq2-saddle}) with the following properties:
\begin{enumerate}
\item $v$ depends only on the variables $s,\: t$ and $\lambda$. We
write $v=v(s,t,\lambda)$; \item $v>0$ for $s>t$; \item
$v(s,t,\lambda)=-v(t,s,\lambda)$.
\end{enumerate}

We recall that we have defined the sets:
$$\mathcal{O}=\{x\in \re^{2m}:s>t\}\subset \re^{2m},\quad
\widetilde{\mathcal{O}}=\{(x,\lambda)\in \re^{2m+1}_+:x\in
\mathcal O\}\subset \re^{2m+1}_+.$$
Let $B_R$ be the open ball in $\re^{2m}$ centered at the origin
and of radius $R$. We will consider the open bounded
sets
$$
{\mathcal O}_R:={\mathcal O}\cap B_R=\{s>t,
 |x|^2=s^2+t^2<R^2\}\subset \re^{2m}.
$$
$$\widetilde{\mathcal{O}}_{R,L}:=(\mathcal O \cap B_R)\times (0,L)=\{(x,\lambda)\in
\re^{2m+1}_+:s>t,\:|x|^2=s^2+t^2<R^2, \lambda<L\}.$$ Note that
$$
\partial {\mathcal O}_R=({\mathcal C}\cap \overline{B}_R)\cup
(\partial{B_R}\cap {\mathcal O}).
$$

Before giving the proof of Theorem \ref{existence}, we recall some results established in \cite{C-SM} concerning the regularity of weak solutions of problem \eqref{eq2-saddle}. Cabr\'e and Sol\`a-Morales \cite{C-SM} proved that every bounded weak solution $v$ of problem \eqref{eq2-saddle} with $f\in C^{1,\alpha}$, satisfies $v\in C^{1,\alpha}$, for all $0<\alpha<1$. This result was deduced using the auxiliary function
$$w(x,\lambda)=\int_0^{\lambda}v(x,t)dt,$$
which is a solution of the Dirichlet problem
$$\begin{cases}
   -\Delta w=f(v(x,0)) &\mbox{in}\:\: \re^{2m+1}_+\\
w(x,0)=0 &\mbox{on}\:\: \partial \re^{2m+1}_+
  \end{cases}.
$$
Applying standard regularity results for the Dirichlet problem above, they deduce regularity for the solution $v$ of problem \eqref{eq2-saddle}.
Moreover, using standard elliptic estimates for bounded harmonic functions, we have that the following gradient bound for $v$ holds:
\begin{equation}\label{grad}
 |\nabla v(x,\lambda)|\leq \frac{C}{1+\lambda}\quad \mbox{for every}\;\: (x,\lambda)\in \overline{\re^{2m+1}_+}.
\end{equation}
We define now the sets
$$\widetilde L^2(\widetilde{{\mathcal O}}_{R,L})=\{v\in
L^2(\widetilde{{\mathcal O}}_{R,L}) :
v=v(s,t,\lambda) \text{ a.e.}\}
$$ and
$$
\widetilde{H}_{0}^1(\widetilde{{\mathcal O}}_{R,L})=\{v\in
H^1(\widetilde{{\mathcal O}}_{R,L}) : v\equiv 0
\:\:\mbox{on}\:\:\partial^+\widetilde{{\mathcal O}}_{R,L},\:
v=v(s,t,\lambda) \text{ a.e.}\}.
$$
They are, respectively, the set of $L^2$ functions in the bounded open set $\widetilde{{\mathcal
O}}_{R,L}$ which depend only on $s$, $t$,  and $\lambda$, and the set of $H^1$ functions in the bounded open set $\widetilde{{\mathcal
O}}_{R,L}$ which depend only on $s$, $t$ and $\lambda$ and which
vanish on the positive boundary $\partial^+\widetilde{{\mathcal
O}}_{R,L}$ in the weak sense.

We recall that the inclusion $\widetilde{H}_{0}^1(\widetilde{{\mathcal O}}_{R,L}) \subset\subset
L^2(\widetilde{{\mathcal O}}_{R,L})$ is compact (see \cite{C-SM}). Indeed, let $v \in \widetilde{H}_{0}^1(\widetilde{{\mathcal O}}_{R,L})$. Since $v\equiv 0$ on $\partial^+\widetilde{{\mathcal O}}_{R,L}$, 
we can extend $v$ to be identically $0$ in $\re^{2m+1}_+\setminus \widetilde{{\mathcal O}}_{R,L}$, so that $v\in \widetilde{H}^1(\re^{2m+1}_+)=\{v\in
H^1(\re^{2m+1}_+) :
v=v(s,t,\lambda) \text{ a.e.}\}$.
We have
\begin{eqnarray*}
\int_{\partial^0\widetilde{{\mathcal O}}_{R,L}}|v(x,0)|^2 dx =-\int_{\re^{n+1}_+}\partial_{\lambda}(|v|^2)=-2\int_{\re^{n+1}_+}v\partial_{\lambda}v\leq C||v||_{\widetilde L^2(\widetilde{{\mathcal O}}_{R,L})}||v||_{\widetilde H^1(\widetilde{{\mathcal O}}_{R,L})}.
\end{eqnarray*}
Now, the compactness of the inclusion, follows from the fact that since $v\equiv 0$ on $\partial^+\widetilde{{\mathcal O}}_{R,L}$ a.e., then $\widetilde{H}_{0}^1(\widetilde{{\mathcal O}}_{R,L}) \subset\subset
\widetilde L^2(\widetilde{{\mathcal O}}_{R,L})$ is compact (to see this it is enough to extend $v$ to be identically zero in a $A\setminus \widetilde{{\mathcal O}}_{R,L}$, where $A\subset \re^{n+1}_+$ is a Lipschitz set containing $\widetilde{{\mathcal O}}_{R,L}$).

We can now give the proof of Theorem \ref{existence}.

\begin{proof}[Proof of Theorem \ref{existence}]
As already mentioned, we prove the existence of a solution $v$ for
the problem (\ref{eq2-saddle}) such that $v=v(s,t,\lambda)$ and
$v(s,t,\lambda)=-v(-t,s,\lambda)$. The space $\widetilde{H}_{0}^1(\widetilde{{\mathcal O}}_{R,L}) $, defined above,
is a weakly closed subspace of $H^1(\widetilde{{\mathcal O}}_{R,L})$.

Consider the energy functional in $\widetilde{{\mathcal
O}}_{R,L}$,
$$
{\mathcal E}_{\widetilde{{\mathcal
O}}_{R,L}}(v)=\int_{\widetilde{{\mathcal O}}_{R,L}}
\frac{1}{2}|\nabla v|^2+ \int_{\partial^0 {\widetilde{{\mathcal
O}}_{R,L}}}G(v) \qquad \text{for every}\:\: v\in
\widetilde{H}^1_{0}(\widetilde{{\mathcal O}}_{R,L}).
$$

 Next, we prove the existence of a minimizer of this
functional among functions in $ \widetilde{H}^1_{0}(\widetilde{{\mathcal O}}_{R,L})$. Recall that we assume
condition \eqref{h2} on $G$, that is,
$$
G(\pm 1)=0 \;\; \text{ and } G>0 \text{ in } (-1,1).
$$

We define a continuous extension $\widetilde{G}$ of $G$ in $\re$
such that
\begin{itemize}
\item $\widetilde{G}=G$ in $[-1,1]$, \item $\widetilde{G}>0$ in
$\re\setminus [-1,1]$, \item $\widetilde{G}$ is even,
\item
$\widetilde{G}$ has linear growth at infinity.
\end{itemize}
We consider the new energy functional
$$\widetilde{{\mathcal E}}_{\widetilde{{\mathcal
O}}_{R,L}}(v)=\int_{\widetilde{{\mathcal O}}_{R,L}}
\frac{1}{2}|\nabla v|^2+ \int_{\partial^0 {\widetilde{{\mathcal
O}}_{R,L}}}\widetilde{G}(v) \qquad \text{for every }\:\: v\in
\widetilde{H}_{0}^1(\widetilde{{\mathcal O}}_{R,L}).
$$
Note that every minimizer $w$ of $\widetilde{{\mathcal
E}}_{\widetilde{\mathcal O}_{R,L}}(\cdot) $ in
$\widetilde{H}_{0}^1(\widetilde{{\mathcal O}}_{R,L})$ such that
$-1\leq w \leq 1$ is also a minimizer of $\mathcal
E_{\widetilde{{\mathcal O}}_{R,L}}(\cdot)$ in the set
$$\{v\in \widetilde H^1_{0}(\widetilde{{\mathcal O}}_{R,L}):
-1\leq v\leq 1\}$$

 We show that $\widetilde{{\mathcal
E}}_{\widetilde{\mathcal O}_{R,L}}(\cdot) $ admits a minimizer in
$\widetilde{H}_{0}^1(\widetilde{{\mathcal O}}_{R,L})$. Indeed,
by the properties of
$\widetilde{G}$, it follows that $\widetilde{{\mathcal
E}}_{\widetilde{\mathcal O}_{R,L}}(\cdot) $ is well-defined,
bounded below and coercive in
$\widetilde{H}_{0}^1(\widetilde{{\mathcal O}}_{R,L})$. Hence, using
the compactness of the inclusion
$\widetilde{H}_{0}^1(\widetilde{{\mathcal O}}_{R,L})\subset\subset
\widetilde L^2(\partial^0 \widetilde{{\mathcal O}}_{R,L}) $, taking a
minimizing sequence $\{v_{R,L}^k\}\in
\widetilde{H}_{0}^1(\widetilde{{\mathcal O}}_{R,L})$ and a
subsequence convergent in $\widetilde L^2(\partial^0 \widetilde{{\mathcal
O}}_{R,L}) $, we conclude that $\widetilde{{\mathcal
E}}_{\widetilde{{\mathcal O}}_{R,L}}(\cdot) $ admits an absolute
minimizer $v_{R,L}$ in $\widetilde{H}_{0}^1(\widetilde{{\mathcal
O}}_{R,L})$.

Note moreover that, without loss of generality,  we may assume
that $0\leq v_{R,L}^k\leq 1$ in $\widetilde{{\mathcal O}}_{R,L}$
because, if not, we can replace the minimizing sequence
$v_{R,L}^k$ with the sequence $\min\{|v_{R,L}^k|,1\}\in
\widetilde{H}_{0}^1(\widetilde{{\mathcal O}}_{R,L})$. Indeed, it is
also minimizing because $\widetilde{G}$ is even and $\widetilde{G}\geq
\widetilde{G}(1)$. Then the absolute minimizer $v_{R,L}$ is such that
$0\leq v_{R,L}\leq 1$ in $\widetilde{{\mathcal O}}_{R,L}$.

Next, we can consider perturbations $v_{R,L}+\xi$ of $v_{R,L}$,
with $\xi$ depending only on $s,\:t$ and $\lambda$, and having
compact support in $\widetilde{{\mathcal O}}_{R,L}\cap \{t>0\}$.
In particular $\xi$ vanishes in a neighborhood of $\{t=0\}$. Since
the problem $(\ref{eq2-saddle})$ in $(s,t,\lambda)$ coordinates is the first
variation of ${\mathcal E}_{\widetilde{{\mathcal O}}_{R,L}}(v) $
---recall that ${\mathcal E}$ has the form \eqref{enerst} on
$\widetilde{H}_{0}^1$ functions--- and the equation is not singular
away from $\{s=0\}$ and $\{t=0\}$, we deduce that $v_{R,L}$ is a
solution of $(\ref{eqst})$ in $\widetilde{{\mathcal O}}_{R,L}\cap
\{t>0\}$.

We now prove that $v_{R,L}$ is also a solution in all of
$\widetilde{{\mathcal O}}_{R,L}$, that is, also across $\{t=0\}$.
To see this for dimensions $2m+1\geq 5$, let $\xi_{\varepsilon}$
be a smooth function of $t$ alone being identically $0$ in
$\{t<\varepsilon/2\}$ and identically $1$ in $\{t>\varepsilon\}$.
Let $\varphi\in C^{\infty}_0(\widetilde{{\mathcal
O}}_{R,L}\cup\partial^0\widetilde{{\mathcal
O}}_{R,L} )$, we multiply the equation $-\Delta
v_{R,L}=0$ by $\varphi\xi_{\varepsilon}$ and integrate by parts to
obtain

$$\int_{\widetilde{{\mathcal O}}_{R,L}}\nabla v_{R,L}\nabla
\varphi\:\xi_{\varepsilon}+\int_{\widetilde{{\mathcal
O}}_{R,L}\cap \{t<\varepsilon\}}\nabla v_{R,L}\:\varphi\:\nabla
\xi_{\varepsilon}+\int_{\partial^0\widetilde{{\mathcal
O}}_{R,L}}\partial_{\lambda}v_{R,L}\:\varphi\:\xi_{\varepsilon}=0.$$
Reminding that $v_{R,L}$ satisfies the Neumann condition
$-\partial_{\lambda}v_{R,L}=f(v_{R,L})$ on
$\partial^0\widetilde{{\mathcal O}}_{R,L}$, we get
\begin{equation}\label{cut}\int_{\widetilde{{\mathcal O}}_{R,L}}\nabla v_{R,L}\nabla
\varphi\:\xi_{\varepsilon}+\int_{\widetilde{{\mathcal
O}}_{R,L}\cap \{t<\varepsilon\}}\nabla v_{R,L}\:\varphi\:\nabla
\xi_{\varepsilon}=\int_{\partial^0\widetilde{{\mathcal
O}}_{R,L}}f(v_{R,L})\:\varphi\:\xi_{\varepsilon}.\end{equation}

We conclude by seeing that the second integral on the left hand
side goes to zero as $\varepsilon\to 0$. Indeed, by
Cauchy-Schwartz inequality,
\begin{eqnarray}
&&\left| \int_{{\widetilde{{\mathcal O}}_{R,L}}\cap
\{t<\varepsilon\}}  \nabla v_{R,L}\varphi \nabla\xi_{\varepsilon}
 \;dx d\lambda \right|^2
\nonumber \\
&& \hspace{3em} \leq C \int_{{\widetilde{{\mathcal O}}_{R,L}}\cap \{t<\varepsilon\}}
|\nabla v_{R,L}|^2 \;dx d\lambda \int_{{\widetilde{{\mathcal
O}}_{R,L}}\cap \{t<\varepsilon\}} |\nabla\xi_{\varepsilon}|^2 \;dx
d\lambda.
\end{eqnarray}

Since $|\nabla\xi_{\varepsilon}|^2 \leq C/\varepsilon^2$,
$|{\widetilde{{\mathcal O}}_{R,L}}\cap \{t<\varepsilon\}|\leq C_R
\varepsilon^m\:L$, and $m\geq 2$, the second factor in the
previous bound, is bounded independently of $\varepsilon$. At the
same time, the first factor tends to zero as $\varepsilon\to 0$,
since $|\nabla v_{R,L}|^2$ is integrable in ${\widetilde{{\mathcal
O}}_{R,L}}$.

In dimension $2m+1=3$, the previous proof does not apply and we
argue as follows. We consider perturbations $\xi\in
\tilde{H}_{0}^1(\widetilde{{\mathcal O}}_{R,L})$ which do not
vanish on $\{t=0\}$. Considering the first variation of energy and
integrating by parts, we find that the boundary flux
$s^{m-1}t^{m-1}\partial_tv_{R,L}=\partial_tv_{R,L}$ (here $m-1=0$)
must be identically 0 on $\{t=0\}$. This implies that $v_{R,L}$ is
a solution also across $\{t=0\}$.

We have established the existence of a solution $v_{R,L}$ in
$\widetilde{{\mathcal O}}_{R,L}$ with $0\leq v_{R,L}\leq 1$.
Considering the odd reflection of $v_{R,L}$ with respect to
${\mathcal C}\times \re^+$,
$$
v_{R,L}(s,t,\lambda)=-v_{R,L}(t,s,\lambda),
$$
we obtain a solution in $B_R\setminus \{0\}\times (0,L)$. Using
the same cut-off argument as above, but choosing now
$1-\xi_{\varepsilon}$ to have support in the ball of radius
$\varepsilon$ around $0$, we conclude that $v_{R,L}$ is also
solution around $0$, and hence in all of $B_R\times (0,L)$. Here,
the cut-off argument also applies in dimension $3$.

We now wish to pass to the limit in $R$ and $L$, and obtain a
solution in all of $\re^{2m+1}_+$. Let $S>0$, $L'>0$ and consider the family
$\{v_{R,L}\}$ of solutions in $B_{S+2}\times[0,L'+2]$, with
${R>S+2}$ and $L>L'+2$. Since $|v_{R,L}|\leq 1$, regularity results proved in \cite{C-SM}, applied in
$B_{2}\times[0,2]$ where $B_2$ is centered at points in
$\overline{B}_S\times [0,L']$, give a uniform
$C^{2,\alpha}(\overline{B}_S\times[0,L'])$ bound for $v_{R,L}$
(uniform with respect to $R$ and $L$). We have
\begin{equation}\label{grad1}
|\nabla v_{R,L}|\leq C \quad\text{ in } B_S\times [0,L'],
\qquad\text{for all } R>S+2,\:\:L>L'+2
\end{equation}
for some constant $C$ independent of $S$, $R$, $L$ and $L'$. Moreover since $v_{R,L}$ is harmonic and bounded we have that
\begin{equation}\label{grad2}
 |\nabla v_{R,L}(x,\lambda)|\leq \frac{C}{\lambda}\quad \mbox{in}\:\:B_R\times (1,L).
\end{equation}
Choose now
$L=R^{\gamma}$, with $1/2<\gamma<1$ (this choice will be used later to prove that the solution that we construct is not identically zero). By the
Arzel\`a-Ascoli Theorem, a subsequence of $\{v_{R,R^{\gamma}}\}$ converges in
$C^2(\overline{B}_S\times [0,S^{\gamma}])$ to a solution in
$B_S\times(0,S^{\gamma})$. Taking $S=1,2,3,\ldots$ and making a
Cantor diagonal argument, we obtain a sequence $v_{R_j,R_j^{\gamma}}$
converging in $C^2_{{loc}}(\re^{2m+1}_+)$ to a solution $v\in
C^2(\re^{2m+1}_+)$. By construction we have found a solution $v$
in $\re^{2m+1}_+$ depending only on $s,\:t$ and $\lambda$, such
that $v(s,t,\lambda)=-v(t,s,\lambda)$, $|v|\leq 1$ and $v\geq 0$
in $\{s>t\}$. We want to prove now that $|v|<1$. Indeed,
remind that $v$ satisfies
$$\begin{cases}
\Delta v=0&\mbox{in}\:\: \re^{2m+1}_+\\
-\partial_\lambda v=f(v)&\mbox{on}\:\: \partial{\re^{2m+1}}
\end{cases}$$
Since $f( 1)=0$ and $v$ is not identically $1$ (because $v\equiv 0$ on $\mathcal C \times \re^+$), using that $v\leq 1$ and applying
the maximum principle and Hopf's Lemma, we conclude that $v<1$. In
the same way we prove that $v>-1$.

It only remains to prove that $v\not \equiv 0$ in $\re^{2m+1}_+$. Then, the strong maximum principle and Hopf's Lemma lead to $v>0$ in $\{s>t\}\times \re^+$ since $f(0)=0$ and $v\geq 0$ in $\{s>t\}\times \re^+$.

To prove that $v\not \equiv 0$ in $\re^{2m+1}_+$, we establish an energy estimate for the saddle solution constructed above, which is not sharp, but it is enough to prove $v\not \equiv 0$ in $\mathcal O=\{s>t\}\times \re^+$.

We use a comparison argument, based on the minimality property of $v_{R,L}$ in the set $\widetilde{\mathcal O}_{R,L}$.

Let $1/2 <\gamma < 1$ as above and $\beta$ be a positive real number depending only on $\gamma$ and such that $1/2\leq \beta<\gamma<1$. Let $S<R-2$, then $S^{\gamma}<L$
since we have chosen $L=R^\gamma$. We consider a $C^1$ function
$g:\widetilde {\mathcal O}_{S,S^{\gamma}}\rightarrow \re$ defined as follows:
$$g(x,\lambda)=g(s,t,\lambda)=\eta(s,t)\min\left\{1,\frac{s-t}{\sqrt 2}\right\}+(1-\eta(s,t))v_{R,L}(s,t,\lambda),$$
where $\eta$ is a smooth function depending only on $r^2=s^2+t^2$ such that $\eta\equiv 1$ in $B_{S-1}$ and $\eta\equiv 0$ outside $B_S$. Observe that $g$ agrees with $v_{R,L}$ on the lateral boundary of $\widetilde {\mathcal O}_{S,S^{\gamma}}$ and $g$ is identically $1$ inside
$(\mathcal O_{S-1}\cap \{(s-t)/\sqrt 2>1\})\times (0,S^{\gamma})$. By \eqref{grad1} and \eqref{grad2}, we have that
\begin{equation}\label{grad_g}
|\nabla g(x,\lambda)|\leq \frac{C}{\lambda+1}\quad \mbox{for every} \:\:(x,\lambda) \in \widetilde {\mathcal O}_{S,S^{\gamma}}.
\end{equation}
Next we consider a $C^1$ function  $\xi:(0,S^\gamma)\rightarrow (0,+\infty)$, such that
$$\xi(\lambda)=\begin{cases} 1 & \mbox{if}\:\:0<\lambda\leq S^{\gamma}-S^{\beta}\\
\displaystyle \frac{\log {S^\gamma}-\log \lambda}{\log {S^\gamma}-\log {(S^\gamma-S^\beta)}} & \mbox{if}\:\:S^{\gamma}-S^{\beta}<\lambda\leq S^{\gamma}
                \end{cases}
$$

Then, we define $w:\widetilde {\mathcal O}_{S,S^{\gamma}}\rightarrow (-1,1)$ as follows
\begin{equation}\label{defw}
 w(x,\lambda)=\xi(\lambda)g(x,\lambda)+[1-\xi(\lambda)]v_{R,L}(x,\lambda).
\end{equation}

Observe that $w$ agree with $v_{R,L}$ on $\partial^+\widetilde {\mathcal O}_{S,S^{\gamma}}$ and $w\equiv 1$ in $\widetilde {\mathcal O}_{S-1,S^{\gamma}-S^\beta}$.
We extend $w$ to be identically equal to $v_{R,L}$ in $\widetilde {\mathcal O}_{R,L}\setminus \widetilde{ \mathcal O}_{S,S^{\gamma}}$.
By minimality of $v_{R,L}$ in $\widetilde{ \mathcal O}_{R,L}$, we have
$$\mathcal E_{ \widetilde {\mathcal O}_{R,L}}(v_{R,L})\leq \mathcal E_{ \widetilde {\mathcal O}_{R,L}}(w).$$
Thus, since $w=v_{R,L}$ in $\widetilde {\mathcal O}_{R,L}\setminus \widetilde {\mathcal O}_{S,S^{\gamma}}$, we get
$$\mathcal E_{ \widetilde {\mathcal O}_{S,S^{\gamma}}}(v_{R,L})\leq \mathcal E_{ \widetilde {\mathcal O}_{S,S^{\gamma}}}(w).$$
We give now an estimate for $\mathcal E_{ \widetilde {\mathcal O}_{S,S^{\gamma}}}(w).$
First, observe that, since $w\equiv 1$ on $\mathcal O_{S-1}$, then
\begin{equation}\label{pot}
\int_{\mathcal O_S} G(w)= \int_{\mathcal O_S \setminus \mathcal O_{S-1}}G(w)\leq C|\mathcal O_S \setminus \mathcal O_{S-1}|\leq CS^{2m-1}.
\end{equation}

Next, we give a bound for the Dirichlet energy of $w$. We have
\begin{eqnarray}\label{dir}\int_{\widetilde {\mathcal O}_{S,S^{\gamma}}}|\nabla w(x,\lambda)|^2 dx d\lambda&=& \int_{\widetilde {\mathcal O}_{S,S^{\gamma}-S^\beta}}|\nabla w(x,\lambda)|^2 dx d\lambda \nonumber\\
&&\hspace{1em}+\int_{\widetilde {\mathcal O}_{S,S^{\gamma}}\setminus \widetilde {\mathcal O}_{S,S^{\gamma}-S^\beta}}|\nabla w(x,\lambda)|^2 dx d\lambda.\end{eqnarray}

Since $w\equiv 1$ in $\widetilde {\mathcal O}_{S-1,S^{\gamma}-S^\beta}$, we get
\begin{equation}\label{dir1}
\int_{\widetilde {\mathcal O}_{S,S^{\gamma}}}|\nabla w(x,\lambda)|^2 dx d\lambda \leq CS^{2m-1+\gamma} +\int_{\widetilde {\mathcal O}_{S,S^{\gamma}}\setminus \widetilde {\mathcal O}_{S,S^{\gamma}-S^\beta}}|\nabla w(x,\lambda)|^2 dx d\lambda.
\end{equation}

Consider now the integral on the right-hand side of \eqref{dir1}. By the definition \eqref{defw} of $w$, we have that
$$ |\nabla w(x,\lambda)|^2\leq |\xi'(\lambda)|^2[g(x,\lambda)+v_{R,L}(x,\lambda)]^2+\{|\nabla g|^2+|\nabla v_{R,L}(x,\lambda)|^2\}[1+\xi(\lambda)]^2.$$

Integrating in $\widetilde {\mathcal O}_{S,S^{\gamma}}\setminus \widetilde {\mathcal O}_{S,S^{\gamma}-S^\beta}$, using that $g,\:|\nabla g|,\:v,$ and $\xi$ are bounded, the definition of $\xi$, and the gradient bounds \eqref{grad2} and \eqref{grad_g} for $v_{R,L}$ and for $g$, we get
\begin{eqnarray}\label{dir2}
\int_{\widetilde {\mathcal O}_{S,S^{\gamma}}\setminus \widetilde {\mathcal O}_{S,S^{\gamma}-S^\beta}} |\nabla w(x,\lambda)|^2 &\leq& C\int_{\mathcal O_S}\int_{S^{\gamma}-S^\beta}^{S^{\gamma}}|\xi'(\lambda)|^2d\lambda dx + C\int_{\mathcal O_S}\int_{S^{\gamma}-S^\beta}^{S^{\gamma}}\frac{1}{\lambda^2}d\lambda dx\nonumber \\
\displaystyle &\leq& C\left[\frac{1}{\left(\log{\frac{S^\gamma}{S^\gamma-S^\beta}}\right)^2}+1\right]\int_{\mathcal O_S}\int_{S^{\gamma}-S^\beta}^{S^{\gamma}}\frac{1}{\lambda^2}d\lambda dx \nonumber \\
&\leq& CS^{2m}\left[\frac{1}{\left(-\log{(1-S^{\beta-\gamma})}\right)^2}+1\right]\left[\frac{1}{S^{\gamma}-S^\beta}-\frac{1}{S^{\gamma}}\right]\nonumber  \\
&\leq& CS^{2m}\cdot S^{2(\gamma-\beta)}\cdot S^{-\gamma} \leq CS^{2m+\gamma-2\beta},
\end{eqnarray} where $C$ denotes different positive constants independent on $S$.

Combining \eqref{pot}, \eqref{dir1} and \eqref{dir2}, we get
\begin{equation}\label{total}\mathcal E_{\widetilde {\mathcal O}_{S,S^{\gamma}}}(w)\leq C(S^{2m-1}+S^{2m-1+\gamma}+S^{2m+\gamma-2\beta}).\end{equation}

Since, by hypothesis, $\gamma$ and $\beta=\beta (\gamma)$ satisfy $1/2\leq \beta<\gamma<1$, then there exists $\varepsilon=\varepsilon(\gamma)>0$ such that
$$\mathcal E_{\widetilde {\mathcal O}_{S,S^{\gamma}}}(w)\leq CS^{2m-\varepsilon}.$$
Thus by minimality of $v_{R,L}$, we get
$$\mathcal E_{\widetilde {\mathcal O}_{S,S^{\gamma}}}(v_{R,L})\leq CS^{2m-\varepsilon}.$$

We now let $R$ and $L=R^{\gamma}$ tend to infinity to obtain
$$\mathcal E_{\widetilde {\mathcal O}_{S,S^{\gamma}}}(v)\leq CS^{2m-\varepsilon}.$$
Note that this bound, after odd reflection with respect to $\mathcal C$, leads to the energy bound \eqref{en-saddle}
$$\mathcal E_{C_{S,S^{\gamma}}}(v)\leq CS^{2m-\varepsilon}.$$

Using this estimate we prove the claim. Suppose that $v \equiv 0$. Then we would have
$$c_m G(0)S^{2m}=\mathcal E_{C_{S,S^{\gamma}}}(v)\leq CS^{2m-\varepsilon}.$$
This is a contradiction for $S$ large, and thus $v\not \equiv 0$.

We give now the proof of the last part of the statement, that is, we prove stability of saddle-shaped solutions under perturbations vanishing on $\mathcal C\times (0,+\infty)$.

Since $f(0)=0$, concavity leads to $f'(w)\leq f(w)/w$ for all real
numbers $w\in (0,1)$. Hence we have
$$\begin{cases}
-\Delta v=0 & \mbox{in}\:\: \mathcal{\widetilde{O}}\\
\displaystyle -\partial_\lambda v\geq f'(v)v& \mbox{on}\:\:
\mathcal{O}\times \{0\}. \end{cases}$$

By a simple argument (see the proof of Proposition 4.2 of
\cite{AAC}), it follows that the value of the quadratic form
$Q_v(\xi)$ is nonnegative for all $\xi\in C^1$ with compact
support in $\mathcal {\widetilde{O}}\cup
\partial^0\mathcal{\widetilde{O}}$ (and not necessarily depending only on
$s$, $t$ and $\lambda$). Indeed, multiply the equation $-\Delta
v=0$ by $\xi^2/v$, where $\xi\in C^1(\re^{2m+1}_+)$ with compact
support in $\mathcal {\widetilde{O}}\cup
\partial^0\mathcal{\widetilde{O}} $, and
integrate by parts in $\mathcal{\widetilde{O}}$, we get:
\begin{eqnarray} 0&=&\int_{\mathcal{\widetilde O}} (-\Delta
v)\frac{\xi^2}{v}=\int_{\mathcal {\widetilde O}}\nabla v\cdot
\nabla \xi \frac{2\xi}{v}\nonumber
\\ &&\quad -\int_{\mathcal {\widetilde O}} |\nabla
v|^2\frac{\xi^2}{v^2}+\int_{\partial^0\mathcal
O}\frac{\xi^2}{v}\frac{\partial v}{\partial \lambda} \nonumber \\
&\leq& \int_{\mathcal {\widetilde O}} |\nabla \xi|^2-
\int_{\partial^0\mathcal O}f'(v)\xi^2 =Q_v(\xi).\nonumber \end{eqnarray}

By an approximation argument, the same holds for all $\xi\in C^1$
with compact support in the closure of ${\widetilde{\mathcal O}}$ and
vanishing on  ${\mathcal C}\times \re^+$. Finally, by odd symmetry
with respect to ${\mathcal C}\times \re^+$, the same is true for
all $C^1$ functions $\xi$ with compact support in
$\overline{\re_+^{2m+1}} $ and vanishing on~${\mathcal C}\times
\re^+$.
\end{proof}
\begin{oss}
Observe that, if $\gamma \rightarrow 1$, estimate \eqref{total} tends to
$$\mathcal E_{C_{S,S}}(v)\leq CS^{2m}.$$
This is a not sharp energy estimate, indeed in Theorem 1.7 of \cite{C-Cinti1}, Cabr\'e and the author prove that saddle solutions $v$ satisfy
$$\mathcal E_{C_{S,S}}(v)\leq CS^{2m-1}\log S.$$
\end{oss}

\section{Supersolution and subsolution for $A_{1/2}$}

In \cite{Tan}, Cabr\'e and Tan introduced the operator $A_{1/2}$, which is
the square root of the Laplacian for functions defined on a bounded set and that
vanish on the boundary.
Let $u$ be defined in a bounded set $H\subset \re^n$ and $u\equiv 0$ on $\partial
H$. Consider the harmonic extension $v$ of $u$ in the half-cylinder
$H\times(0,\infty)$ vanishing on the lateral boundary
$\partial H\times[0,\infty)$. Define the operator
$A_{1/2}$ as follows
\begin{equation}\label{def-A} A_{1/2}u:=-\partial_\lambda v_{|H \times \{0\}}.\end{equation}

Then, since $\partial_{\lambda}v$ is harmonic and also vanishes on
the lateral boundary,
as for the case of the all space, the Dirichlet-Neumann map of the harmonic extension $v$ on the bottom of the half cylinder
 is the square root of the Laplacian.  That is, we have the property:
\[
A_{1/2}\circ A_{1/2}=-\Delta_{H}\
\]
where $-\Delta_{H}$ is the Laplacian in $H$ with zero
Dirichlet boundary value on $\partial H$.

 Hence, we can
study the problem \begin{equation}\label{pb-A}\begin{cases}
A_{1/2}u=f(u)&
\mbox{in}\:\:H\\
u=0 & \mbox{on}\:\: \partial H\\
u>0 &\mbox{in}\:\:H,\end{cases}\end{equation} by
studying the local problem
\begin{equation}\label{pb-A2}
\begin{cases}
 -\Delta v=0 &  \mbox{in}\quad  \Omega=H\times (0,\infty)\\
 v=0 & \mbox{on}\quad \partial_{L}\Omega=\partial H\times [0,\infty)\\
 v>0 &  \mbox{in}\quad \Omega\\
 -\partial_\lambda v=f(v) & \mbox{on} \quad H\times \{0\}.

\end{cases}
\end{equation}

In \cite{Tan} some results (Lemma 3.2.3 and Lemma 3.2.4) need to assume that
$H$ is bounded. But for our aim, definition \eqref{def-A} is
enough and it can be given also in the case that $H$ is not bounded.
Thus, we can consider problem \eqref{pb-A} and \eqref{pb-A2} for a
general open set $H\subset \re^n$.

In this section we give a subsolution and supersolution for the problem
\begin{equation}\begin{cases}
A_{1/2}u=f(u)&
\mbox{in}\:\:\mathcal{O}\\
u=0 &\mbox{on}\:\: \partial \mathcal{O}\\
u>0 & \mbox{in}\:\:\mathcal{O}.\end{cases}\end{equation}

In what follows it will be useful to use the new
variables:
\begin{equation}
\begin{cases}
\displaystyle y  =   \frac{s+t}{\sqrt{2}} \\
\displaystyle z  =  \frac{s-t}{\sqrt{2}}
\end{cases}.
\end{equation}
Note that $|z|\leq y$ and that we may write the Simons cone as
${\mathcal C}=\{z=0\}$.

If we take into account these new variables, problem
($\ref{eqst}$) becomes
\begin{equation}\label{eqyz}\begin{cases}
\displaystyle v_{yy}+v_{zz}+v_{\lambda\lambda}+\frac{2(m-1)}{y^2-z^2}\left(yv_y-zv_z\right)=0& \mbox{in}\:\:\re^{2m+1}_+\\
-\partial_\lambda v=f(v)& \mbox{on}\:\:\partial{\re^{2m+1}_+}
\end{cases}
\end{equation}

We give the definition of supersolution and subsolution for 
problem (\ref{pb-A}) by using the associated local formulation
(\ref{pb-A2}).

\begin{defin}
a) We say that a function $w$, defined on $H\times [0,+\infty)$, $w\equiv 0$ on $\partial H\times [0,+\infty)$ is a
\emph{supersolution} (\emph{subsolution}) for problem
(\ref{pb-A2}) if
$$\begin{cases}
-\Delta w \geq\:(\leq)\:0& \mbox{in}\:\;H \times (0,+\infty)\\
w>0 & \mbox{in}\:\;H \times (0,+\infty)\\
\displaystyle -\partial_\lambda w \geq\:(\leq)\:f(w) &
\mbox{on}\:\;H \times\{0\}. \end{cases}
$$

b) We say that a function $u$, defined on $H$, $u\equiv 0$ on
$\partial H$, $u>0$ in $H$, is a \emph{supersolution}
(\emph{subsolution}) for problem (\ref{pb-A}) if its harmonic
extension $v$ such that $v\equiv 0$ on $\partial H \times
[0,+\infty)$, is a supersolution (subsolution) for problem \eqref{pb-A2}.
\end{defin}

\begin{lem}
The following assertions are equivalent:
\begin{itemize}
\item[i)] $u$ is a subsolution (supersolution) for problem
(\ref{pb-A}); \item[ii)] there exists an extension $w$ of $u$ on
$H \times (0,+\infty)$ vanishing on $\partial H\times (0,+\infty)$, such that $w$ is a subsolution
(supersolution) for problem (\ref{pb-A2}).\end{itemize}
\end{lem}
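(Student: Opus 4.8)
The statement is an equivalence between two ways of saying that $u$ is a sub/supersolution of the nonlocal problem \eqref{pb-A}: (i) the intrinsic definition, which refers to the \emph{harmonic} extension $v$ of $u$ vanishing on $\partial H\times[0,+\infty)$; and (ii) the existence of \emph{some} extension $w$ of $u$ (not necessarily harmonic) vanishing on the lateral boundary that is a sub/supersolution of \eqref{pb-A2}. The implication (i)$\Rightarrow$(ii) is immediate: take $w=v$, the harmonic extension itself; by definition it satisfies $-\Delta v=0$ (hence the correct inequality trivially), $v>0$ in $H\times(0,+\infty)$ by the strong maximum principle since $u>0$ in $H$, and the Neumann inequality $-\partial_\lambda v\ge(\le)f(v)$ on $H\times\{0\}$ is precisely what it means for $u$ to be a sub/supersolution of \eqref{pb-A}. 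So the content is in (ii)$\Rightarrow$(i).

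For (ii)$\Rightarrow$(i) I would treat, say, the supersolution case (the subsolution case is identical with reversed inequalities). Let $w$ be an extension as in (ii) and let $v$ be the harmonic extension of $u$ with $v\equiv 0$ on $\partial H\times[0,+\infty)$. Set $\psi:=w-v$. Then $\psi\equiv 0$ on $\partial H\times[0,+\infty)$, $\psi\equiv 0$ on $H\times\{0\}$ (both $w$ and $v$ equal $u$ there), and $-\Delta\psi=-\Delta w\ge 0$ in $H\times(0,+\infty)$, i.e.\ $\psi$ is superharmonic with zero boundary data on the whole boundary of the half-cylinder. By the maximum principle $\psi\ge 0$ in $H\times[0,+\infty)$, so $w\ge v$; in particular $w$ attains its boundary value $0$ on $H\times\{0\}$ from above, while $v$ also vanishes there, so $\psi$ has a minimum at every point of $H\times\{0\}$. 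Hopf's lemma (applied to the superharmonic function $\psi$, using that $\psi\ge0$ and $\psi$ vanishes on the flat piece $H\times\{0\}$) then gives $\partial_\lambda\psi\le 0$ on $H\times\{0\}$, i.e.\ $\partial_\lambda v\ge\partial_\lambda w$, hence $-\partial_\lambda v\le -\partial_\lambda w$. Wait --- this is the wrong direction; I need $-\partial_\lambda v\ge f(v)=f(u)$. The point is that $-\partial_\lambda v$ is the \emph{actual} value $A_{1/2}u$, and I must show it dominates $f(u)$ knowing that $-\partial_\lambda w\ge f(w)=f(u)$. So I actually want $-\partial_\lambda v\ge -\partial_\lambda w$, i.e.\ $\partial_\lambda\psi\le 0$ on the bottom, which is exactly the Hopf inequality above since $\psi\ge 0$ in the cylinder and $\psi=0$ on $H\times\{0\}$ (the outward normal to the cylinder at the bottom points in the $-\lambda$ direction, so $\psi\ge0$ and $\psi(\cdot,0)=0$ force $\partial_\lambda\psi(\cdot,0)\le 0$ directly, without even invoking Hopf). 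Therefore $-\partial_\lambda v\ge -\partial_\lambda w\ge f(u)=f(v)$ on $H\times\{0\}$, which together with $-\Delta v=0$, $v>0$ in $H\times(0,+\infty)$ (strong maximum principle, since $u>0$ in $H$) says precisely that $u$ is a supersolution of \eqref{pb-A}.

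The main technical point to be careful about is the regularity needed to justify the comparison and the sign of $\partial_\lambda\psi$ on the bottom: one needs $w,v$ to be, say, continuous up to $\overline{H\times(0,\infty)}$ with the appropriate one-sided $\lambda$-derivatives existing on $H\times\{0\}$, which is part of the standing meaning of "supersolution"/"extension" in Definition above; and if $H$ is unbounded one invokes the maximum principle in the half-cylinder in the form valid for bounded-below superharmonic functions with nonnegative boundary data (the functions here are bounded, so this is standard). I expect the only real subtlety is bookkeeping of the inequality directions and the elementary observation that nonnegativity of $\psi$ in the interior plus its vanishing on the flat bottom boundary immediately controls the sign of the inward normal derivative there --- no deep tool is required. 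I would write the subsolution case by symmetry in one line.
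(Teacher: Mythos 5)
Your approach is the same as the paper's: for $ii)\Rightarrow i)$ one compares the harmonic extension $v$ with the given extension $w$ (the paper does this without introducing $\psi$, but it is exactly the comparison $v\leq w$ by the maximum principle followed by a normal-derivative comparison at the bottom face). The conclusion you reach is correct. However, you have a pair of cancelling sign errors in your final paragraph. The inequality you want, $-\partial_\lambda v\geq -\partial_\lambda w$ on $H\times\{0\}$, is equivalent to $\partial_\lambda\psi\geq 0$ there (not $\leq 0$): indeed $\partial_\lambda\psi=\partial_\lambda w-\partial_\lambda v$, so $\partial_\lambda\psi\geq 0$ gives $\partial_\lambda v\leq \partial_\lambda w$, i.e.\ $-\partial_\lambda v\geq -\partial_\lambda w$. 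And that is precisely what the elementary observation yields: since $\psi\geq 0$ in $H\times(0,+\infty)$ and $\psi(\cdot,0)=0$, the one-sided derivative as $\lambda\to 0^+$ satisfies $\partial_\lambda\psi(\cdot,0)\geq 0$, not $\leq 0$ as you wrote. (Your earlier aborted computation actually had the equivalence right and the derivative sign wrong; your ``correction'' then flipped the equivalence instead of the sign, and the two mistakes canceled.) With both signs fixed the chain closes exactly as intended: $-\partial_\lambda v\geq -\partial_\lambda w\geq f(w)=f(u)=f(v)$ on $H\times\{0\}$, and together with $\Delta v=0$, $v=0$ on $\partial H\times[0,+\infty)$ and $v>0$ inside, this shows that $u$ is a supersolution of problem \eqref{pb-A}. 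Your observation that Hopf's lemma is not actually needed for this inequality (only the one-sided sign of the normal derivative at a minimum is used) is a correct and useful remark.
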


\begin{proof}
The first implication $i)\;\Rightarrow\: ii)$ is trivial.

It remains to show that $ii)\; \Rightarrow \:i)$. We consider the
case of supersolution (the argument for subsolution is analog). Suppose that there exists a function $w$
defined on $\re^{n+1}_+$ such that:
$$\begin{cases}
-\Delta w \geq 0& \mbox{in}\:\;H \times (0,+\infty)\\
w\equiv 0& \mbox{on}\;\:\partial H \times (0,+\infty)\\
w>0 & \mbox{in}\:\;H \times (0,+\infty)\\
w(x,0)=u(x)& \mbox{on}\:\;H \times\{0\}\\
 -\partial_\lambda w \geq f(w) &
\mbox{on}\:\;H \times\{0\}. \end{cases}
$$
Now consider the harmonic extension $v$ of $u$ in $H \times
(0,+\infty)$, with $v\equiv 0$ on $\partial H \times
(0,+\infty)$. Then by the maximum principle we have that $v\leq w$
in $H \times (0,+\infty)$. This implies that
$$-\partial_\lambda v\geq -\partial_\lambda w\;\;\mbox{on}\;\;\partial H \times (0,+\infty)$$ and hence that
$$-\partial_\lambda v\geq f(v) \;\;\mbox{on}\;\;\partial H \times (0,+\infty).$$
\end{proof}

We recall that in \cite{C-SM} it is
proven that, under hypothesis \eqref{h2}, there exists a layer solution (i.e., a monotone increasing solution, from $-1$ to $1$), for 
problem (\ref{eq2-saddle}) in dimension $n=1$. Normalizing it to vanish at $\{x=0\}$, we call it $u_0$ (see \eqref{eq-layer}).

Moreover we remind that $|s-t|/\sqrt 2$ is the distance to the Simons cone (see \cite{CT}).


We can give now the following proposition. The first part of the statement, which gives a supersolution for problem \eqref{pb-A} in $H=\mathcal O$, is equivalent to Proposition \ref{super-ext} in the Introduction.
\begin{prop} \label{super-sub}
Let $f$ satisfy hypothesis \eqref{h1}, \eqref{h2}, \eqref{h3}. Let $u_0$ be the layer solution, vanishing at the origin, of problem \eqref{eq1-saddle} in $\re$.

Then, the function $\displaystyle u_0(z)=u_0(s-t)/\sqrt 2$ is a
supersolution of problem (\ref{pb-A}) in the set
$H=\mathcal O=\{s>t\}$.

\end{prop}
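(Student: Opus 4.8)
The plan is to verify the two differential inequalities in Proposition \ref{super-ext} directly, using that $v_0$ solves the one-dimensional layer problem and exploiting the geometry of the change of variables. Let $U(x,\lambda) := v_0(z,\lambda)$ with $z = (s-t)/\sqrt 2$. First I would compute $\Delta U$ in $\re^{2m+1}_+$. Since $v_0$ depends only on the single spatial variable $z$ and on $\lambda$, and since the Euclidean Laplacian in the $(s,t,\lambda)$ variables for functions depending only on those variables is given by \eqref{eqst} (with the singular first-order terms $(m-1)(v_s/s + v_t/t)$), I would pass to the $(y,z,\lambda)$ coordinates via \eqref{defyz}, where the equation takes the form \eqref{eqyz}. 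Because $U$ is independent of $y$, the term $U_{yy}$ and the $yU_y$ contribution vanish, and one is left with
$$
\Delta U = U_{zz} + U_{\lambda\lambda} + \frac{2(m-1)}{y^2 - z^2}\bigl(-z\,U_z\bigr) = (v_0)_{zz} + (v_0)_{\lambda\lambda} - \frac{2(m-1)z}{y^2-z^2}(v_0)_z.
$$
The first two terms vanish since $v_0$ is harmonic in the half-plane $\re^2_+$. So $-\Delta U = \dfrac{2(m-1)z}{y^2-z^2}(v_0)_z$ in $\widetilde{\mathcal O}$.

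The key sign observations are then: in $\widetilde{\mathcal O} = \{s>t\}\times(0,\infty)$ we have $z>0$, and also $y^2 - z^2 = 2st > 0$ there (since $s>t\geq 0$ forces $s>0$, and on $\mathcal O$ away from $\{t=0\}$ both are positive; the boundary cases $t=0$ I would handle by noting $z = s/\sqrt2 \le y = s/\sqrt 2$, i.e. $y^2-z^2=0$, which is a measure-zero set of the boundary of $\mathcal O$ and does not affect the supersolution inequality in the interior). Moreover $(v_0)_z = v_0' > 0$ since $u_0$, hence $v_0$, is increasing. Therefore $-\Delta U \ge 0$ in $\widetilde{\mathcal O}$, which is the first inequality. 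For the boundary condition, on $\mathcal O\times\{0\}$ we have $-\partial_\lambda U(x,0) = -\partial_\lambda v_0(z,0) = f(v_0(z,0)) = f(U(x,0))$, using that $v_0$ is the harmonic extension of the layer solution $u_0$, which satisfies $-\partial_\lambda v_0 = f(u_0) = f(v_0(\cdot,0))$ on $\partial\re^2_+$. This gives equality, hence in particular $-\partial_\lambda U \ge f(U)$ on $\mathcal O\times\{0\}$, as required.

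The main (minor) obstacle is the behaviour near $\{t=0\}$ and near the origin, where the coefficient $2(m-1)z/(y^2-z^2)$ blows up: one must be careful that the computation of $\Delta U$ in the distributional sense across $\{t=0\}$ does not produce an unwanted singular contribution. I would address this exactly as in the existence proof above — the set $\{t=0\}$ (equivalently $y^2 = z^2$) has the right codimension so that, for $m\ge 2$, $U \in H^1_{\mathrm{loc}}$ near it and no singular measure appears; alternatively, since $v_0$ and $v_0'$ are bounded and smooth, $U$ is smooth in the Euclidean variables $(x,\lambda)$ (it is just $v_0$ composed with the smooth map $x \mapsto (s-t)/\sqrt 2$ away from $\{s=t=0\}$), and $\{s=t\}$ is precisely the Simons cone which is outside $\overline{\widetilde{\mathcal O}}$-interior considerations. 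Finally, the statement of Proposition \ref{super-sub} in \S3 is the reformulation in terms of the operator $A_{1/2}$: since $u_0(z)$ vanishes on $\partial\mathcal O = \mathcal C = \{z=0\}$, is positive in $\mathcal O$, and its harmonic extension vanishing on $\partial\mathcal O\times[0,\infty)$ is dominated by (in fact, by the preceding Lemma, it suffices that there \emph{exists} an extension, namely $U$, satisfying the supersolution inequalities), the two inequalities just proved are exactly the definition of $u_0(z)$ being a supersolution of \eqref{pb-A} in $H = \mathcal O$. This completes the proof.
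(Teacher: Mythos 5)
Your main computation matches the paper's ``direct computation'' in the region $\{s>t>0\}$: passing to $(y,z,\lambda)$, using that $U=v_0(z,\lambda)$ is independent of $y$, that $v_0$ is harmonic in $\re^2_+$, and that $z>0$, $y^2-z^2=2st>0$, $(v_0)_z>0$ all hold there, yields $-\Delta U = \frac{2(m-1)z}{y^2-z^2}(v_0)_z\ge 0$, and the Neumann condition holds with equality. The cut-off argument you invoke to get across $\{t=0\}$ is also the one the paper uses for $2m+1\ge 5$.

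However, there is a genuine gap for $m=1$ (dimension $2m=2$, extension in $\re^3_+$), which the proposition does cover. You correctly note that the cut-off argument requires $m\ge 2$ (so that $|\{t<\varepsilon\}|\sim\varepsilon^m$ makes the error term vanish), but the ``alternative'' you offer to close the gap is false: the map $x\mapsto(s-t)/\sqrt 2 = (|x_{1..m}|-|x_{m+1..2m}|)/\sqrt 2$ is \emph{not} smooth away from $\{s=t=0\}$; it is merely Lipschitz, with a kink across $\{t=0\}$ (and $\{s=0\}$) because $t=|x_{m+1..2m}|$ fails to be $C^1$ there. For $m\ge 2$ the kink sits on a codimension-$\ge 2$ set and contributes no singular measure to $\Delta U$, so your first argument suffices; but for $m=1$ the set $\{t=0\}$ has codimension one in $\re^3_+$, and $\Delta U$ acquires a genuine singular measure supported there. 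One must check its sign: since the outward flux across $\{t=0\}$ is $-\partial_t U = \frac{1}{\sqrt 2}\,\partial_x v_0\bigl((s-t)/\sqrt 2,\lambda\bigr)>0$, the singular part of $-\Delta U$ is a nonnegative measure, so $U$ is still superharmonic in the distributional sense. This is exactly the additional observation the paper makes for $2m+1=3$, and it is needed to complete your proof; as written, your argument does not cover $m=1$, and the ``alternatively, $U$ is smooth'' route is incorrect.
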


\begin{oss}\label{f(u)/u}
We observe that, if $f$ satisfies hypothesis \eqref{h1}, \eqref{h2}, \eqref{h3}, then $f(\rho)/\rho$ is non-increasing in $(0,1)$.
Indeed, given $0<\rho<1$, there exists $\rho_1$, with $0<\rho_1<\rho$, such that
$$\frac{f(\rho)}{\rho}=\frac{f(\rho)-f(0)}{\rho-0}=f'(\rho_1)>f'(\rho).$$
Therefore
$$\left(\frac{f(\rho)}{\rho}\right)'=\frac{f'(\rho)\rho-f(\rho)}{\rho^2}=\frac{f'(\rho)-f'(\rho_1)}{\rho}<0.$$
\end{oss}

\begin{proof}[Proof of Proposition \ref{super-sub}]
We begin by considering the function $\displaystyle v_0((s-t)/\sqrt 2,\lambda)$  and
we show that it is a supersolution of the problem (\ref{pb-A2}) in
the set $\widetilde{\mathcal{O}}$.

First, we remind that the
problem (\ref{pb-A2}) in the $(s,t,\lambda)$ variables reads

\begin{equation}\begin{cases}
\displaystyle -(v_{ss}+v_{tt}+v_{\lambda\lambda})-(m-1)\left(\frac{v_s}{s}+\frac{v_t}{t}\right)=0 &\mbox{in}\;\;
\widetilde{\mathcal{O}} \\
v=0 &\mbox{on}\;\;\mathcal C\times [0,+\infty)\\
-\partial_\lambda v=f(v)& \mbox{on}\;\;
\widetilde{\mathcal{O}}\cap \{\lambda=0\}\\
v>0& \mbox{in}\;\;\widetilde{\mathcal{O}}. \end{cases}
\end{equation}
By a direct computation, we have that $\displaystyle v_0((s-t)/\sqrt 2,\lambda)$ is superharmonic in the set $\{(s,t,\lambda):s>t>0\}$
and satisfies the Neumann condition $-\partial_{\lambda}v=f(v)$. In dimension $2m+1 \geq 5$
there is nothing else to be checked, by a cut-off argument used as
in (\ref{cut}).

In dimension $2m+1=3$, $\displaystyle v_0((s-t)/\sqrt 2,\lambda)$ is a
supersolution in $\widetilde{\mathcal{O}}$ because the outer flux
$\displaystyle -\partial_t v_0((s-t)/\sqrt 2,\lambda)=\partial_x
v_0\left((s-t)/\sqrt 2,\lambda\right)>0$ is positive.\end{proof}
\begin{oss}
Observe that in dimension $2m+1=3$, $\displaystyle v_0((s-t)/\sqrt 2,\lambda)$
is a solution of problem (\ref{eq2-saddle}) away from the sets
$\{s=0\}$, $\{t=0\}$, while in higher dimensions it is a strict
supersolution.
\end{oss}

\begin{cor}\label{super-k}
Let $f$ satisfy hypothesis \eqref{h1}, \eqref{h2}, \eqref{h3}. Let $u_0$ be the layer solution, vanishing at the origin, of problem \eqref{eq1-saddle} in $\re$ and suppose $K\geq 1$.

Then, the function $\min\{\displaystyle Ku_0(z),1\}=\min\{Ku_0(s-t/\sqrt 2),1\}$ is a
supersolution of problem (\ref{pb-A}) in the set
$\mathcal O=\{s>t\}$.
\end{cor}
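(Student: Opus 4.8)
The plan is to reduce everything to the equivalence lemma proved just above: to show that $\min\{Ku_0(z),1\}$ is a supersolution of \eqref{pb-A} in $H=\mathcal{O}$, it suffices to exhibit \emph{one} extension of it to $\mathcal{O}\times(0,+\infty)$ that vanishes on $\mathcal{C}\times[0,+\infty)$ and is a supersolution of the local problem \eqref{pb-A2}. The natural candidate is
$$w(s,t,\lambda):=\min\left\{K\,v_0\!\left(\frac{s-t}{\sqrt 2},\lambda\right),\,1\right\},$$
where $v_0$ is the harmonic extension of $u_0$ in $\re^2_+$. Since $u_0$, and hence $v_0$, is odd in its first variable, $v_0(0,\lambda)=0$, so $w\equiv 0$ on $\{z=0\}\times[0,+\infty)=\mathcal{C}\times[0,+\infty)$, while $w(s,t,0)=\min\{Ku_0(z),1\}$; thus $w$ is an admissible extension and it only remains to verify the three conditions in the definition of supersolution for \eqref{pb-A2}.

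Positivity is immediate: $v_0>0$ on $\{z>0\}\times[0,+\infty)$, because $u_0>0$ on $(0,+\infty)$ and, being bounded and harmonic, $v_0$ is positive on the corresponding quarter of plane by the maximum principle; since $K\ge 1$ it follows that $w>0$ in $\widetilde{\mathcal{O}}$. For superharmonicity, recall from the proof of Proposition \ref{super-sub} that $v_0((s-t)/\sqrt{2},\lambda)$ is superharmonic in $\widetilde{\mathcal{O}}$ --- away from $\{t=0\}$ by direct computation, and across $\{t=0\}$ by a cut-off argument when $2m+1\ge 5$ and by the positivity of the outer flux $-\partial_t v_0>0$ when $2m+1=3$ (note that $\{s=0\}$ does not meet $\mathcal{O}$). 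Hence $Kv_0$ is superharmonic in $\widetilde{\mathcal{O}}$, and since the constant $1$ is harmonic, $w=\min\{Kv_0,1\}$ is superharmonic in $\widetilde{\mathcal{O}}$, the minimum of a superharmonic function and a harmonic one being superharmonic (in particular across the free boundary $\{Kv_0=1\}$).

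The substantial point is the Neumann inequality $-\partial_\lambda w\ge f(w)$ on $\mathcal{O}\times\{0\}$, which I would split according to the value of $Kv_0(z,0)$. Where $Kv_0(z,0)<1$, one has $w=Kv_0$ in a relative neighbourhood of the boundary point, so, using the Neumann condition $-\partial_\lambda v_0=f(u_0)=f(v_0(\cdot,0))$ satisfied by $v_0$,
$$-\partial_\lambda w=Kf\!\left(v_0(z,0)\right),\qquad\text{and it remains to see that}\qquad Kf(v_0(z,0))\ge f\!\left(Kv_0(z,0)\right).$$
Since $0<v_0(z,0)\le Kv_0(z,0)<1$, this is exactly Remark \ref{f(u)/u}, i.e.\ that $\rho\mapsto f(\rho)/\rho$ is non-increasing on $(0,1)$, which is where hypothesis \eqref{h3} enters. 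Where $Kv_0(z,0)\ge 1$, one has $w=1$ at the point, so $f(w)=f(1)=0$; moreover $w\le 1=w(z,0)$ along the whole vertical ray, so the one-sided $\lambda$-derivative at $\lambda=0$ is $\le 0$ (if $Kv_0(z,0)>1$, then $w\equiv 1$ near the point; if $Kv_0(z,0)=1$, then $\partial_\lambda v_0(z,0)=-f(u_0(z))<0$, so $Kv_0<1$ for small $\lambda>0$, $w=Kv_0$ there, and the derivative equals $K\partial_\lambda v_0(z,0)<0$), whence $-\partial_\lambda w\ge 0=f(w)$. These verify all three requirements, and the equivalence lemma then gives the corollary.

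I expect the only delicate bookkeeping to be at the free boundary $\{Kv_0=1\}$, where one must check that both superharmonicity and the Neumann inequality persist, and not merely on the open sets $\{Kv_0<1\}$ and $\{Kv_0>1\}$; both are standard once read in the weak (or viscosity) sense --- the first because a minimum of a superharmonic and a harmonic function is superharmonic, the second because on $\{w=1\}$ the inequality collapses to $-\partial_\lambda w\ge 0$, which holds since $w$ is maximized there along vertical rays. The essential analytic ingredient throughout is the scaling inequality $Kf(\rho)\ge f(K\rho)$ for $0<\rho\le K\rho<1$ supplied by Remark \ref{f(u)/u}; this is precisely where the concavity hypothesis \eqref{h3} is used, and without it the truncation $\min\{Kv_0,1\}$ would in general fail to be a supersolution.
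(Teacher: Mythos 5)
Your proof is correct and follows essentially the same route as the paper's: consider the harmonic-extension candidate $w=\min\{Kv_0(z,\lambda),1\}$, verify superharmonicity via Proposition \ref{super-sub}, and verify the Neumann inequality where $Kv_0<1$ using the scaling inequality $Kf(\rho)\ge f(K\rho)$ from Remark \ref{f(u)/u}. The only difference is that the paper asserts without elaboration that "it is enough to prove that it is a supersolution in $\{Kv_0<1\}$," whereas you make that reduction explicit by checking the region $\{Kv_0\ge 1\}$ (where $w\equiv 1$, $f(w)=0$, and the one-sided $\lambda$-derivative is nonpositive) and by invoking the fact that the minimum of a superharmonic and a harmonic function is superharmonic across the interface $\{Kv_0=1\}$; this is a welcome filling-in of details, not a new argument.
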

\begin{proof}
Proceeding as in the proof of Proposition \ref{super-sub}, we consider the function \\ $\min\{Kv_0(z,\lambda),1\}$. To prove that it is a supersolution of problem \eqref{pb-A2} in $\widetilde {\mathcal O}$, it is enough to prove that it is a supersolution  of problem \eqref{pb-A2} in the set $\{(x,\lambda)\in \widetilde {\mathcal O}:Kv_0(z,\lambda)<1\}$.

First of all, in the proof of Proposition \ref{super-sub}, we have seen that $v_0(z,\lambda)$ is superharmonic in $\widetilde {\mathcal O}$, and thus $\min\{Kv_0(z,\lambda),1\}=Kv_0(z,\lambda)$ is superharmonic in the set $\{(x,\lambda)\in \widetilde {\mathcal O}:Kv_0(z,\lambda)<1\}$.

Moreover
$$-\partial_{\lambda}(Kv_0(z,0))=Kf(v_0(z,0))\quad \mbox{on}\:\:\{(x,0)\in \widetilde {\mathcal O}:Kv_0(z,0)<1\}.$$
By Remark \ref{f(u)/u}, we have that $f(u)/u$ is decreasing and then for every $K\geq 1$ we get
$$\frac{Kf(u_0)}{Ku_0}=\frac{f(u_0)}{u_0}\geq \frac{f(Ku_0)}{Ku_0}\quad \mbox{if}\:\:Ku_0<1.$$
This let us to conclude the proof, indeed
\begin{equation*}
-\partial_{\lambda}(Kv_0(z,0))=Kf(v_0(z,0))\geq f(Kv_0(z,0))\quad \mbox{on}\:\:\{(x,0)\in \widetilde {\mathcal O}:Kv_0(z,0)<1\}.
\end{equation*}

\end{proof}

\section{The operator $D_{H,\varphi}$ and maximum principles}

In what follows we need to introduce a new nonlocal operator $D_{H,\varphi}$,
which is the analogue of $A_{1/2}$ but it can be applied to
functions which do not vanish on the boundary of $H$.

Suppose that $u$ and $\varphi$ are functions defined in $\overline H \subset \re^n$, such that $u=\varphi$
on $\partial H$. As in the case of $A_{1/2}$ we want to
consider the harmonic extension $v$ of $u$ in the cylinder
$\Omega=H \times (0,+\infty)$ and we have to give Dirichlet data
on the lateral boundary of the cylinder $\partial_L \Omega=\partial
H \times (0,+\infty)$. We do it in the following way: we put
$v(x,\lambda)=\varphi(x)$ for every $(x,\lambda)\in \partial_L \Omega$.

As before we define $D_{H,\varphi}$ as follows:
$$D_{H,\varphi}u:=-\partial_{\lambda}v_{|\Omega\times\{0\}}.$$

We observe that, since $v$ is independent on
$\lambda$ on $\partial_L \Omega$, we have $v_{\lambda}=0$ on the lateral
boundary. Thus, we can apply the operator
$A_{1/2}$ to $v_\lambda(x,0)$ and we get, as before
$$A_{1/2}\circ D_{H,\varphi}=-\Delta_{H,\varphi}$$
where $-\Delta_{H,\varphi}$ is the Laplacian in $H$ with
Dirichlet boundary value $\varphi$.

If we have a nonlocal problem of the type
$$\begin{cases} D_{H,\varphi}u=f(u)& \mbox{in}\:\:H\\
u=\varphi& \mbox{on} \:\:\partial H,\end{cases}$$ then it
can be restated in the local problem
\begin{equation}\label{ext-D}\begin{cases}
-\Delta v=0 & \mbox{in}\:\:\Omega\\
v(x,\lambda)=\varphi(x)& \mbox{on}\:\:\partial_L \Omega\\
\displaystyle-\partial_\lambda v=f(v)& \mbox{on}\:\:H
\times \{0\}.\end{cases}\end{equation}

Observe that the operator
$D_{H,\varphi}$ coincides with $A_{1/2}$  if the boundary data $\varphi$
is identically zero.

Next, we give some maximum principles for the operator $D_{H,\varphi}$.
\begin{lem}\label{MP1}
Let $\Omega=H\times \re^+$ be a cylinder in $\re^{n+1}_+$, where $H\subset \re^n$ is a bounded domain. Let $v \in C^2(\Omega)\cap C^(\overline \Omega)$ be a bounded harmonic function in $\Omega.$
Then,
$$\inf_{\Omega}v=\inf_{\partial \Omega} v.$$
\end{lem}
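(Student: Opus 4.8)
The statement asserts that a bounded harmonic function $v$ on the half-cylinder $\Omega = H\times\re^+$, continuous up to $\overline\Omega$, attains its infimum on $\partial\Omega$. The plan is to reduce this to the classical minimum principle on bounded subdomains by an exhaustion argument, handling the fact that $\Omega$ is unbounded in the $\lambda$-direction. First I would set $M := \inf_{\partial\Omega} v$; since $v$ is bounded below, $M$ is finite, and by continuity $v \geq M$ on all of $\partial\Omega$. The goal is to show $v \geq M$ throughout $\Omega$, which immediately gives $\inf_\Omega v = M$ (the reverse inequality being trivial since $\overline{\partial\Omega}\subset\overline\Omega$ and $v$ is continuous).

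The key step is to truncate the cylinder at height $L$: consider $\Omega_L := H\times(0,L)$, a bounded domain whose boundary consists of the bottom $H\times\{0\}$, the lateral part $\partial H\times[0,L]$ (both contained in $\partial\Omega$, where $v\geq M$), and the top cap $H\times\{L\}$. On the top cap we only know $v \geq m_0 := \inf_\Omega v > -\infty$. Applying the classical minimum principle for harmonic functions on the bounded domain $\Omega_L$, one gets $v \geq \min\{M, m_0\}$ on $\Omega_L$; since this holds for every $L$ and $\bigcup_L \Omega_L = \Omega$, we only recover $v \geq \min\{M,m_0\}$, which is not yet enough. To close the gap I would instead compare $v$ with a harmonic barrier that controls the contribution of the top cap: on $\Omega_L$, use the comparison function $w_L(x,\lambda) := M - (M - m_0)\,h_L(x,\lambda)$ where $h_L$ is harmonic in $\Omega_L$, equal to $0$ on the bottom and lateral boundary and equal to $1$ on the top cap (e.g. one can majorize $h_L$ by the explicit one-dimensional harmonic function $\lambda/L$, or by a suitable function of $\mathrm{dist}$). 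Then $v \geq w_L$ on $\partial\Omega_L$, hence on $\Omega_L$ by the comparison principle, and since $h_L(x,\lambda) \to 0$ as $L\to\infty$ for each fixed $(x,\lambda)$, letting $L\to\infty$ yields $v(x,\lambda) \geq M$ at every point.

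The main obstacle is precisely this unboundedness in $\lambda$: without a barrier argument the naive exhaustion leaves a spurious dependence on $\inf_\Omega v$ through the top cap, and one must verify that the barrier $h_L$ (or its explicit majorant $\lambda/L$) genuinely tends to $0$ locally uniformly — which is clear for $\lambda/L$ since $h_L \leq \lambda/L$ by the maximum principle on $\Omega_L$ (both are harmonic, and $\lambda/L$ dominates $h_L$ on $\partial\Omega_L$). A minor technical point is that $\inf_\Omega v$ is finite by hypothesis, so $M - m_0$ is a legitimate finite constant; one should also note $v\in C^2(\Omega)\cap C(\overline\Omega)$ is exactly what is needed to apply the classical maximum and comparison principles on each $\overline{\Omega_L}$. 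Once $v \geq M$ on $\Omega$ is established, combined with the trivial inequality $\inf_\Omega v \leq \inf_{\partial\Omega} v$ (valid because $\partial\Omega\subset\overline\Omega$ and $v$ extends continuously), we conclude $\inf_\Omega v = \inf_{\partial\Omega} v$, as claimed.
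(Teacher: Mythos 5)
Your proof is correct, but it follows a genuinely different route from the paper's. You use an exhaustion of the half-cylinder by truncations $\Omega_L=H\times(0,L)$ and a linear barrier: after normalizing $M=\inf_{\partial\Omega}v$ and $m_0=\inf_\Omega v$, the comparison function $w_L(x,\lambda)=M-(M-m_0)\lambda/L$ is harmonic, lies below $v$ on all of $\partial\Omega_L$, and converges to $M$ pointwise as $L\to\infty$; the classical comparison principle on the bounded domain $\Omega_L$ then yields $v\geq M$. (You phrase this via an intermediate $h_L$ solving a Dirichlet problem with data $0$ on the bottom and sides and $1$ on the top, majorized by $\lambda/L$; it is cleaner to skip $h_L$ entirely and use $\lambda/L$ directly, which also sidesteps the corner discontinuity in $h_L$'s boundary data.) The paper instead builds a single positive harmonic function $\psi(x,\lambda)=\phi_R(x)e^{\sqrt{\mu_R}\lambda}$ from the first Dirichlet eigenfunction $\phi_R$ of $-\Delta$ on a ball $B_R\supset\overline H$, divides through to get $w=v/\psi$, observes that $w\to 0$ as $|(x,\lambda)|\to\infty$, and applies the strong maximum principle to the drift equation satisfied by $w$ to rule out a negative interior minimum. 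Both arguments use boundedness of $H$ — you to make each $\Omega_L$ bounded so the classical comparison principle applies, the paper to place $H$ inside a ball and keep $\phi_R$ bounded below on $\overline H$ — and both use boundedness of $v$ in an essential way. Your linear-barrier version is arguably more elementary (no spectral theory); the paper's version is the more standard device for Phragm\'en--Lindel\"of-type results in cylinders and generalizes more readily when one adds lower-order terms to the equation.
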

\begin{proof}
Substracting a constant from $v$, we may assume that $v$ is nonnegative on $\partial \Omega$ and we need to show $v\geq 0$ in $\Omega$.

We follow a classical argument based on the construction of a strictly positive harmonic function $\psi$ in $\Omega$ tending to infinity as $|(x,\lambda)|\rightarrow \infty$. We proceed in the following way.

First, since $H\subset \re^n$ is bounded, there exists a ball $B_R$ of radius $R$ in $\re^n$ such that $\overline H\subset B_R$. Let $\mu_R$ and $\phi_R$ be, respectively, the first eigenvalue and the corresponding eigenfunction of the Laplacian $-\Delta$ in $B_R$ with $0-$Dirichlet value on $\partial B_R$.

We define the function $\psi: B_R\times \re^+ \rightarrow \re$ as follows
$$\psi(x,\lambda)=\phi_R(x) e^{\sqrt {\mu_R} \lambda}.$$

Then the restriction of $\psi$ in $\Omega$ is a strictly positive harmonic function.

Moreover, since $\phi_R$ is bounded, we have that
\begin{equation}\label{lim-psi}
\lim_{|(x,\lambda)|\rightarrow +\infty} \psi(x,\lambda)=\lim_{\lambda\rightarrow +\infty}\psi(x,\lambda)=+\infty.
\end{equation}

We consider now the function $w=v/\psi$.
Then $w$ satisfies
\begin{equation*}
\begin{cases}
 \displaystyle -\Delta w - 2 \frac{\nabla \psi}{\psi}\cdot \nabla w=0 &\mbox{in}\:\:\Omega \\
w\geq 0 & \mbox{on} \:\:\partial \Omega.
\end{cases}
\end{equation*}
Note that $w$ has the same sign as $v$. In addition, by \eqref{lim-psi}, $w(x,\lambda)\rightarrow 0$ as $|(x,\lambda)|\rightarrow +\infty$ and thus, by the strong maximum principle (applied, by a contradiction argument, to a possible negative minimum) $w\geq 0$ in $\Omega$, which implies $v\geq 0$ in $\Omega$.
\end{proof}

From the previous result we deduce the following lemma.
\begin{lem}\label{MP2}
Assume that $u\in C^{2}({H})\cap C(\overline{H})$
satisfies
\begin{equation*}
\left\{
\begin{array}{ll}
D_{H,\varphi}u +c(x)u \ge 0 &\mbox{in}\; H,\\
u=\varphi &\mbox{on}\; \partial H,
\end{array}
\right.
\end{equation*}
where $H$ is a bounded domain in $\re^{n}$ and
$c(x)\ge 0$ in $H$. Suppose that $\varphi\geq 0$ on $\partial
H$. Then $u\ge 0$ in $H$.
\end{lem}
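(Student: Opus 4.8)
The plan is to reduce the nonlocal statement for $D_{H,\varphi}$ to a local maximum principle in the half-cylinder $\Omega = H\times\re^+$, exactly as the operator $D_{H,\varphi}$ is defined. First I would let $v$ be the harmonic extension of $u$ in $\Omega$ with lateral data $v(x,\lambda)=\varphi(x)$ on $\partial_L\Omega=\partial H\times[0,+\infty)$, so that by definition $D_{H,\varphi}u = -\partial_\lambda v$ on $H\times\{0\}$. The hypothesis then says
\begin{equation*}
\begin{cases}
-\Delta v = 0 & \mbox{in } \Omega,\\
-\partial_\lambda v + c(x)v \ge 0 & \mbox{on } H\times\{0\},\\
v = \varphi \ge 0 & \mbox{on } \partial_L\Omega.
\end{cases}
\end{equation*}
The goal is to show $v\ge 0$ in $\overline\Omega$, which in particular gives $u=v(\cdot,0)\ge 0$ in $H$.

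The argument I would give is the standard negativity-set argument. Suppose, for contradiction, that $\inf_{\overline\Omega}v < 0$. Since $v$ is bounded, harmonic (hence smooth) in the interior, continuous up to $\partial H\times[0,+\infty)$, and nonnegative on $\partial_L\Omega$, any negative infimum cannot be attained on the lateral boundary, and by the interior strong maximum principle (Lemma \ref{MP1} applied on subcylinders, or directly) a negative value cannot be an interior minimum unless $v$ is constant, which is excluded by the boundary condition. Hence the infimum must be approached on the bottom $H\times\{0\}$, either at an interior point of $\overline H\times\{0\}$ that lies in $H$, or escaping to $\lambda\to+\infty$. The escape-to-infinity case is ruled out by the same comparison with $\psi(x,\lambda)=\phi_R(x)e^{\sqrt{\mu_R}\lambda}$ used in Lemma \ref{MP1}: since $w=v/\psi\to 0$ at infinity, a negative infimum of $w$ (equivalently of $v$) is attained at a finite point. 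So there is a point $x_0\in \overline H$ with $v(x_0,0) = \inf_{\overline\Omega} v < 0$; since $\varphi\ge 0$, $x_0\in H$, an interior boundary point of the Neumann part. At such a minimum point on the flat boundary, Hopf's lemma for the half-space (applied to the harmonic function $v$, after noting $v$ is not constant) forces $\partial_\lambda v(x_0,0) < 0$. But then $-\partial_\lambda v(x_0,0) + c(x_0)v(x_0,0) < 0 + c(x_0)\cdot(\mbox{negative}) \le 0$ since $c\ge 0$, contradicting the hypothesis $-\partial_\lambda v + c(x)v\ge 0$ on $H\times\{0\}$.

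The main obstacle is making the dichotomy "the negative infimum is attained at a finite point of the bottom" fully rigorous: one must simultaneously exclude the lateral boundary (immediate from $\varphi\ge0$ and continuity), the interior of $\Omega$ (strong maximum principle for harmonic functions), and escape to infinity (the barrier $\psi$ from Lemma \ref{MP1}), and also handle the possibility that the infimum is approached along a sequence rather than attained, for which the $w=v/\psi$ substitution is again the clean device since $w\to 0$ at infinity guarantees a genuine interior minimum of $w$. Once the minimum point $x_0\in H$ is secured, the conclusion is a one-line application of Hopf's lemma together with the sign of $c$. I would also remark that the case $c\equiv 0$ already follows directly from Lemma \ref{MP1} together with the Neumann sign condition, so the content is entirely in absorbing the zeroth-order term, which the sign hypothesis $c\ge 0$ makes harmless.
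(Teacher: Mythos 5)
Your approach is essentially the paper's: extend $u$ harmonically to the half-cylinder with lateral data $\varphi$, locate a putative negative minimum on the bottom $H\times\{0\}$, and apply Hopf's lemma there together with $c\ge 0$. One sign is misstated: Hopf's lemma at a minimum point $(x_0,0)$ of the harmonic function $v$ gives $\partial_\lambda v(x_0,0)>0$ (inner normal derivative positive), not $<0$; your subsequent line $-\partial_\lambda v(x_0,0)+c(x_0)v(x_0,0)<0$ shows you intended the correct sign, so this is clearly a typo. Also, the localization to a finite point on the bottom is cleanest done by citing Lemma~\ref{MP1} directly ($\inf_\Omega v=\inf_{\partial\Omega}v$), noting that $v=\varphi\ge 0$ on $\partial_L\Omega$ forces the boundary infimum onto $\overline H\times\{0\}$, and then using compactness of $\overline H$; your phrase ``a negative infimum of $w$ (equivalently of $v$) is attained'' conflates minima of $w=v/\psi$ with minima of $v$, which do not coincide, though the gap is immediately patched by the route just described.
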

\begin{proof} Consider the harmonic extension $v$ of $u$ in $\Omega=H\times (0,+\infty)$
with Dirichlet data $v(x,\lambda)=\varphi(x)$ on the lateral
boundary $\partial_L \Omega=\partial H \times (0,+\infty)$ (as in
the definition of the operator $D_{H,\varphi}$). We prove that $v\ge
0$ in $\Omega $, then in particular $u\ge 0$ in $H$.

Suppose by contradiction that $v$ is negative somewhere in $\Omega
\times \re^+$. Since $v$ is harmonic, by Lemma \ref{MP1} the
$\inf_{\Omega
} v <0$ will be achieved at some point
$(x_{0},0)\in H\times\{0\}$. Thus, we have
\[
\inf_{\Omega}v=v(x_{0},0)<0.
\]
By Hopf's lemma,
\[
v_{\lambda}(x_{0},0) >0.
\]
It follows $$-v_{\lambda}(x_{0},0)=D_{H,\varphi}v(x_{0},0) <0.$$

Therefore, since $c\ge 0$,
\[
D_{H,\varphi}v(x_{0},0)+c(x_{0})v(x_{0},0)<0.
\]
 This is a contradiction  with the hypothesis $D_{H,\varphi} u+ c(x)u\ge 0$.
\end{proof}

The following corollary follows directly by the previous lemma.

\begin{cor}\label{MP3}
Let $H$ be a bounded domain in $\re^n$.
 Suppose that $u_1$ and $u_2$ are two bounded functions, $u_1,\:u_2\in C^2(H)\cap C(\overline H)$, which satisfy
\begin{equation*}
 \begin{cases}
  D_{H,\varphi}u_1\leq D_{H,\varphi} u_2 & \mbox{in}\:\:H \\
u_1=u_2=\varphi & \mbox{on}\:\:\partial H .
 \end{cases}
\end{equation*}

Then, $u_1 \leq u_2$ in $H$.
\end{cor}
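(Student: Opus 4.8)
The plan is to reduce the statement to Lemma~\ref{MP2}, applied to the difference $u:=u_2-u_1$ with zero boundary data and zeroth-order coefficient $c\equiv 0$. So first I would record how the operator $D_{H,\varphi}$ behaves under subtraction. Let $v_1$ and $v_2$ be the harmonic extensions of $u_1$ and $u_2$ in $\Omega=H\times(0,+\infty)$ with lateral Dirichlet data $v_i(x,\lambda)=\varphi(x)$ on $\partial_L\Omega=\partial H\times(0,+\infty)$, as in the definition of $D_{H,\varphi}$. Then $v_2-v_1$ is bounded and harmonic in $\Omega$, it vanishes on $\partial_L\Omega$, and it equals $u_2-u_1$ on $H\times\{0\}$. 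By uniqueness of bounded harmonic functions with prescribed continuous boundary values on the (unbounded) cylinder $\Omega$ --- which follows from Lemma~\ref{MP1} applied both to $v_2-v_1$ and to $v_1-v_2$ --- the function $v_2-v_1$ is precisely the extension of $u:=u_2-u_1$ that enters the definition of $A_{1/2}u=D_{H,0}u$. Consequently
$$D_{H,0}(u_2-u_1)=-\partial_\lambda(v_2-v_1)\big|_{\lambda=0}=D_{H,\varphi}u_2-D_{H,\varphi}u_1\geq 0\quad\text{in }H,$$
the last inequality being the hypothesis.

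Next I would simply invoke Lemma~\ref{MP2}. The function $u=u_2-u_1$ is bounded, belongs to $C^2(H)\cap C(\overline H)$, satisfies $u=\varphi-\varphi=0$ on $\partial H$, and, by the previous step, satisfies $D_{H,0}u+c(x)u\geq 0$ in $H$ with the choices $\varphi\equiv 0$ and $c\equiv 0\geq 0$. Lemma~\ref{MP2} then gives $u\geq 0$ in $H$, that is, $u_1\leq u_2$ in $H$, which is the desired conclusion.

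The only genuinely non-routine point is the identification in the first paragraph: one must check that subtracting the two extensions $v_1,v_2$ yields exactly the bounded harmonic extension of $u_2-u_1$ vanishing on the lateral boundary, so that $D_{H,\varphi}u_2-D_{H,\varphi}u_1$ really is $A_{1/2}(u_2-u_1)=D_{H,0}(u_2-u_1)$. This is where Lemma~\ref{MP1} is used (for both signs), and it is also the reason the boundedness assumption on $u_1$ and $u_2$ cannot be dropped. Once this is in place, the corollary is immediate from Lemma~\ref{MP2}, exactly as claimed.
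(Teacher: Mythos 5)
Your proof is correct and follows exactly the reduction the paper intends when it says the corollary "follows directly by the previous lemma": write $u_2-u_1$, observe that the two lateral Dirichlet data cancel so that $D_{H,\varphi}u_2-D_{H,\varphi}u_1=D_{H,0}(u_2-u_1)$ (with uniqueness of the bounded harmonic extension supplied by Lemma~\ref{MP1}), and then apply Lemma~\ref{MP2} with $c\equiv 0$ and zero boundary data. Your explicit verification of the identification of the two extensions is a helpful detail that the paper leaves implicit.
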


We conclude this section with the following strong maximum principle.
\begin{lem}  \label{strong-MP}
Assume that $u\in C^{2}({H})\cap C(\overline{H})$
satisfies
\begin{equation*}
\left\{
\begin{array}{ll}
D_{H,\varphi}u +c(x)u \ge 0 &\mbox{in}\; H,\\
u\ge 0 &\mbox{in}\; H,\\
u=\varphi &\mbox{on}\; \partial H,
\end{array}
\right.
\end{equation*}
where $\Omega$ is a smooth bounded domain in $\re^{n}$ and $c\in
L^{\infty}(H)$. Suppose $\varphi \geq 0$ on $\partial
H$.

Then, either $u> 0$ in $H$, or $u\equiv 0$ in $H$.
\end{lem}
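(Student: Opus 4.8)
The plan is to transfer the statement to the harmonic extension $v$ of $u$ in the cylinder $\Omega=H\times(0,+\infty)$ with Dirichlet datum $v(x,\lambda)=\varphi(x)$ on the lateral boundary $\partial_L\Omega=\partial H\times(0,+\infty)$, exactly as in the definition of $D_{H,\varphi}$ and in the proof of Lemma~\ref{MP2}. First I would record that $v\ge 0$ in $\Omega$: $v$ is bounded (since $u$ and $\varphi$ are) and harmonic, so by Lemma~\ref{MP1} its infimum over $\Omega$ equals its infimum over $\partial\Omega$, and on $\partial\Omega$ the function $v$ equals $u\ge 0$ on $H\times\{0\}$, equals $\varphi\ge 0$ on $\partial H\times\{0\}$, and equals $\varphi\ge 0$ on $\partial_L\Omega$; hence $v\ge 0$ throughout $\Omega$.

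Next, since $H$ is a domain the open set $\Omega$ is connected, so the classical strong maximum principle for the harmonic (hence superharmonic) function $v$ gives the dichotomy: either $v\equiv 0$ in $\Omega$, or $v>0$ throughout the open cylinder $\Omega$. In the first case $u=v(\cdot,0)\equiv 0$ and we are done, so assume $v>0$ in $\Omega$; it then remains only to show that $v$ does not vanish at any point of $\partial^0\Omega=H\times\{0\}$, which is equivalent to $u>0$ in $H$. Suppose to the contrary that $v(x_0,0)=0$ for some $x_0\in H$. Since $H$ is open, some ball $B_r(x_0)\subset H$, and then the ball $B_{r/2}((x_0,r/2))$ is contained in $\Omega$ and internally tangent to $\{\lambda=0\}$ at $(x_0,0)$; thus $\partial^0\Omega$ satisfies an interior ball condition at $(x_0,0)$. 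As $v>0$ inside this ball and $v(x_0,0)=0$ is its minimum value on the closed ball, Hopf's lemma yields that the derivative of $v$ in the outward normal direction $-e_\lambda$, namely $-\partial_\lambda v(x_0,0)$, is strictly negative, i.e. $\partial_\lambda v(x_0,0)>0$.

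Consequently $D_{H,\varphi}u(x_0)=-\partial_\lambda v(x_0,0)<0$, and since $u(x_0)=v(x_0,0)=0$ we obtain $D_{H,\varphi}u(x_0)+c(x_0)u(x_0)=-\partial_\lambda v(x_0,0)<0$, contradicting the hypothesis $D_{H,\varphi}u+c(x)u\ge 0$ in $H$. Hence $v$ cannot vanish on $H\times\{0\}$ either, i.e. $u>0$ in $H$, which establishes the dichotomy. The argument is entirely standard; the only points deserving a line of care are the nonnegativity of $v$ (Lemma~\ref{MP1} together with $\varphi\ge 0$), the connectedness of $\Omega$, which produces the clean alternative, and the interior ball property of the flat portion $\partial^0\Omega$, which licenses the use of Hopf's lemma there; the hypothesis $c\in L^\infty(H)$ is used only to give meaning to the product $c(x_0)u(x_0)$, which in any case vanishes. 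I do not anticipate a genuine obstacle.
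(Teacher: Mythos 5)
Your argument is correct and follows essentially the same route as the paper's proof: pass to the harmonic extension $v$ on the cylinder with lateral datum $\varphi$, establish $v\ge 0$, invoke the strong maximum principle to reduce to the dichotomy, and then apply Hopf's lemma at a putative interior zero of $u$ on $H\times\{0\}$ to contradict $D_{H,\varphi}u+cu\ge 0$. Your write-up is somewhat more explicit than the paper's (spelling out the use of Lemma~\ref{MP1} for $v\ge0$, the connectedness of the cylinder, and the interior ball condition licensing Hopf's lemma on the flat part of the boundary), but the underlying argument is the same.
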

\begin{proof} The proof is similar to the one of Lemma
\ref{MP2}.

Consider the harmonic extension $v$ of $u$ in $\Omega=H\times [0,+\infty)$ with lateral boundary
data $v=\varphi$ on $\partial_L \Omega$. We observe that $v\ge 0$ in
$\Omega $. Suppose that $v\not\equiv 0$ but $v=0$
somewhere in $\Omega$. Then there exists a minimum
point $x_{0}\in H$ such that $v(x_{0},0)=0$. Hence by Hopf's
lemma we see that $
\partial_\lambda v(x_{0},0)>0$. This implies that $D_{H,\varphi}u(x_{0})+c(x_{0})u(x_{0})<0$,
since $v(x_{0},0)=u(x_{0})=0$, which is a contradiction.
\end{proof}

\section{Maximal saddle solution and monotonicity properties}
In this section we prove Theorem \ref{maximal-u} concerning the existence and monotonicity properties of a maximal saddle solution. 
In the proof we will use that every saddle solution $u$ of $(-\Delta)^{1/2}u=f(u)$ is bounded above by the function $u_b(z)=\min\{1,K|u(z)|\}$ where $z=|s-t|/\sqrt 2$ is the distance to the Simons cone and $K$ is a large constant.
Let $R>0$ and consider the open region
\begin{equation}\label{TR}
T_R=\{x\in\re^{2m}: 0<t<s<R\}.
\end{equation}
Note that $T_R\supset {\mathcal O}_R={\mathcal O}\cap B_R$.

Let, as before, $v$ be the harmonic extension of a saddle solution $u$ in the half-space $\re^{2m+1}_+$.
The regularity results given in \cite{C-SM} give a uniform upper bound for $|\nabla
v|$ (see \eqref{grad}). Then, since $v=0$ on $\mathcal C\times \re^+=\{z=0\}\times \re^+$, there exists a constant $C$, depending 
only on $n$, $||u||_{\infty}$, and $||f||_{C^1}$, such that
$$|v(x,\lambda)|=|v(y,z,\lambda)|\leq C|z|, \quad \mbox{for every}\;\;(x,\lambda)\in \overline{\re_+^{2m+1}}.$$
In particular, we have that $|u(x)|=|v(x,0)|\leq C|z|$ for every $x\in \re^{2m}$.

Observe that there exists a real number $K\geq 1$ such that $$\min\{1,C|z|\}\leq \min\{1,K|u_0(z)|\}\quad \mbox{for every}\;\; z.$$
Indeed it is enough to choose
\begin{equation}\label{k} K\geq \max \{C/u'_0(0), 1/u_0(C^{-1})\}.\end{equation}
This is possible since the quantities $u'_0(0)$ and $u_0(C^{-1})$ are strictly positive.

If we choose $K$ as in \eqref{k}, then the harmonic extension $v$ in $\re^{2m+1}_+$ of every saddle solution $u$ of $\eqref{eq1-saddle}$ satisfies
\begin{equation}\label{bound-k}
 |v(x,\lambda)|\leq \min\{1,K|u_0(z)|\}\quad \mbox{for every}\;\;(x,\lambda)\in \overline{\re_+^{2m+1}}.
\end{equation}

We define
\begin{equation}\label{datobordo}
u_{b}(z):=\min\{1,K|u_0(z)|\},\end{equation}
where $K$ satisfies \eqref{k}.
Note that $u_b=0$  on $\mathcal C \cap \overline{T_R}$.

\begin{lem}\label{maxR}
Let $f$ satisfies conditions \eqref{h1}, \eqref{h2}, \eqref{h3}.

Then, there exists a positive
 solution $\usup_R$ of
$$\begin{cases}
   D_{T_R,u_b} u=f(u) & \mbox{in}\;\;T_R\\
u=u_b &\mbox{on}\;\;\partial T_R.
  \end{cases}$$ which is
maximal
 in $T_R$ in the following sense. We have that $\usup_R\geq u$ in
 $T_R$
(and hence in ${\mathcal O}_R$) for every bounded solution $u$ of $(-\Delta)^{1/2} u=f(u)$ in $\re^{2m}$ that vanishes on the Simons
cone and has the same sign as $s-t$. In addition $\usup_R$ depends
only on $s$
 and $t$.

\end{lem}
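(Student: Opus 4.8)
\textbf{Proof proposal for Lemma \ref{maxR}.}

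The plan is to construct $\usup_R$ as the limit of a monotone iteration scheme (a Perron-type argument) using the operator $D_{T_R,u_b}$, exploiting the fact that $u_b$ is both a supersolution of the nonlocal problem on $T_R$ and an upper barrier for all relevant solutions $u$. First I would verify that $u_b(z)=\min\{1,K|u_0(z)|\}$ is a supersolution of $D_{T_R,u_b}u=f(u)$ in $T_R$: this is essentially Corollary \ref{super-k}, since $T_R\subset\mathcal O$ in the region where $0<t<s$, the function is $\min\{Ku_0(z),1\}$, and the lateral boundary data on $\partial T_R$ agree with $u_b$ by construction; one also checks that the constant function $0$ is a subsolution because $f(0)=0$. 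Thus we have an ordered pair of sub/supersolutions $0\le \usup_R\le u_b$ to work between.

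Next I would set up the iteration. Pick $\Lambda>0$ large enough that $f(\rho)+\Lambda\rho$ is nondecreasing on $[0,1]$ (possible since $f\in C^1$), and define $u^{(0)}:=u_b$, then inductively let $u^{(k+1)}$ solve the linear problem
\begin{equation*}
\begin{cases}
D_{T_R,u_b}u^{(k+1)}+\Lambda u^{(k+1)}=f(u^{(k)})+\Lambda u^{(k)}&\mbox{in }T_R,\\
u^{(k+1)}=u_b&\mbox{on }\partial T_R,
\end{cases}
\end{equation*}
which in the extended formulation \eqref{ext-D} is a linear Dirichlet/Neumann problem for a harmonic function in the cylinder $T_R\times(0,\infty)$ with lateral data $u_b$ and a linear Robin-type condition on the bottom; existence and uniqueness of each $u^{(k+1)}$ follows from the linear theory (energy minimization in the appropriate weighted $H^1$ space, as in Section 2). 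Using the maximum principle Corollary \ref{MP3} together with the fact that $u_b$ is a supersolution, an induction shows $0\le u^{(k+1)}\le u^{(k)}\le u_b$ and the sequence is monotone nonincreasing. Passing to the limit $\usup_R:=\lim_k u^{(k)}$, elliptic regularity (the $C^{2,\alpha}$ estimates of \cite{C-SM} applied to the extension $v$, uniform since $|v|\le 1$) gives convergence in $C^2_{loc}$, so $\usup_R$ solves $D_{T_R,u_b}\usup_R=f(\usup_R)$ in $T_R$ with $\usup_R=u_b$ on $\partial T_R$. Strict positivity ($\usup_R>0$ in $T_R$) follows from the strong maximum principle Lemma \ref{strong-MP}: $\usup_R\ge 0$ and $\usup_R\not\equiv 0$ since it equals $u_b>0$ on the part of $\partial T_R$ where $s=R$ or $t=0$ with $s>t$. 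Since every $u^{(k)}$ depends only on $s,t$ (the data $u_b$ and the region $T_R$ are invariant under the relevant rotations, and the linear problem has a unique solution), so does $\usup_R$.

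For the maximality statement, let $u$ be any bounded solution of $(-\Delta)^{1/2}u=f(u)$ in $\re^{2m}$ vanishing on $\mathcal C$ with the sign of $s-t$; then by \eqref{bound-k}, $u\le u_b$ on $\re^{2m}$, in particular $u\le u_b=\usup_R$ on $\partial T_R$, and $u$ solves $D_{T_R,u|_{\partial T_R}}u=f(u)$ in $T_R$. I would compare $u$ with each iterate: since $u\le u^{(0)}=u_b$, and $f(\rho)+\Lambda\rho$ is nondecreasing, Corollary \ref{MP3} applied to the linear operator $D_{T_R,\cdot}+\Lambda$ — carefully accounting that $u$ and $u^{(k+1)}$ may have different lateral boundary data ($u\le u_b$ there, with the difference extended harmonically being nonnegative by the maximum principle Lemma \ref{MP1}) — gives $u\le u^{(k+1)}$ by induction, hence $u\le\usup_R$ in $T_R$. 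The main obstacle I anticipate is precisely this bookkeeping with the lateral Dirichlet data: the operator $D_{H,\varphi}$ depends on $\varphi$, so comparing two functions with different boundary traces requires first comparing their harmonic extensions in the cylinder (via Lemma \ref{MP1}) and then transferring the inequality to the normal derivatives on the bottom, as was done in the proof of the implication $ii)\Rightarrow i)$ for supersolutions earlier; one must also ensure the region $T_R$ (rather than $\mathcal O_R$) causes no trouble, which it does not since $T_R\supset\mathcal O_R$ and $u\ge 0$ on $\partial T_R\cap\{s>t\}$ by hypothesis while $u_b\ge 0$ everywhere on $\partial T_R$.
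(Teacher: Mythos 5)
Your proposal is correct and follows essentially the same route as the paper: a monotone iteration starting from the supersolution $u_b$ (justified by Corollary \ref{super-k}), using the shifted operator $D_{T_R,u_b}+a$ with $a$ chosen so that $f(\rho)+a\rho$ is monotone, the maximum principles of Section 4 plus Hopf's lemma for the monotonicity and nonnegativity of the iterates, the strong maximum principle for strict positivity, and symmetry/uniqueness of the linear problems for $s,t$-dependence. For maximality you also proceed as the paper does, comparing the harmonic extension of a given solution $u$ against the extensions $\vsup_{R,j}$ of the iterates directly in the cylinder $T_R\times(0,\infty)$, using \eqref{bound-k} on the lateral boundary and Lemma \ref{MP1} together with Hopf on the bottom — you correctly flag this lateral-data bookkeeping as the delicate point, and resolve it the same way. (One small slip: in the comparison step the relevant harmonic difference $v-\vsup_{R,k+1}$ should be nonpositive, not nonnegative, on the lateral boundary, but your conclusion $u\le u^{(k+1)}$ has the right sign.)
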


\begin{proof}
We construct a sequence of solutions of linear problems involving the operator $D_{T_R,u_b}$ and, by
the iterative use of the maximum principle, we prove that this
sequence is non increasing and it converges to the maximal
solution $\overline{u}_R$.

We set
$$Lw:=\left(D_{T_R,u_b}+a\right)w,\;\;\mbox{and}\;\;g(w):=f(w)+aw,$$
where $a$ is a positive constant chosen such that $g'(w)=f'(w)+a$
is positive for every $w$.

Next we define a sequence of functions $\usup_{R,j}$ as follows.
We set
$$\usup_{R,0}(x): =u_b= \min\{1,Ku_0(z)\},\quad\mbox{for every}\;\;x\in T_R,$$ and we define $\usup_{R,j+1}$ to be the solution of the linear
problem
\begin{equation}\label{exisusup}
\left\{
\begin{array}{rcll}
L\usup_{R,j+1} & = & g(\usup_{R,j}) &
  \text{ in } T_R\\
\usup_{R,j+1} & = & u_b &\text{ on }\;\partial{T_R}.
\end{array}
\right.
\end{equation}

Since $L$ is obtained by adding a positive constant to
$D_{T_R,u_b}$, it satisfies the maximum principles (Lemma \ref{MP2} and Corollary \ref{MP3}) and hence
the above problem admits a unique solution
$\usup_{R,j+1}=\usup_{R,j+1}(x)$. Furthermore (and here we argue
by induction), since the problem and its data are invariant by
orthogonal transformations in the first (respectively, in the
last) $m$ variables $x_i$, the solution $\usup_{R,j+1}$ depends
only on $s$ and $t$.

First, observe that by Corollary \ref{super-k}, the function $\usup_{R,0}=\min\{1,Ku_0(z)\}$ is a supersolution of problem $Lw=g(w)$, i.e., $L\usup_{R_0}\geq g(\usup_{R,0})$. This implies that
$L\usup_{R_1}=g(\usup_{R,0})\leq L\usup_{R,0}$ and then $\usup_{R,1}\leq \usup_{R,0}\leq 1$ in $T_R$. Moreover $u_b \geq
0$ on $\partial T_R$ and therefore, by Lemma \ref{MP2}, $\usup_{R,1}\geq 0$ in $T_R$.

Assume now that $0\leq \usup_{R,j}\leq \usup_{R,j-1}\leq 1$ for
some $j\geq 1$. By the choice of $a$, we have
$g(\usup_{R,j})\leq \nolinebreak g(\usup_{R,j-1})$. We get
$$L \usup_{R,j+1} = g(\usup_{R,j}) \leq g(\usup_{R,j-1})=L \usup_{R,j}.$$

Again by the maximum principle (Corollary \eqref{MP3}) $\usup_{R,j+1}\leq \usup_{R,j}$.
Besides, $\usup_{R,j+1}\geq 0$ since $g(\usup_{R,j})\geq0$.
Therefore, by induction we have proven that the sequence
$\usup_{R,j}$ is nonincreasing, that is
$$1\geq\usup_{R,0}(x)\geq \usup_{R,1}(x)\geq \dots \geq
\usup_{R,j}(x)\geq \usup_{R,j+1}(x)\geq \dots\geq 0.$$

By monotone convergence, this sequence converges to a nonnegative
solution in $T_R$, $\usup_R$, which depends only on $s$ and $t$,
and such that $\usup_R=u_b(z)$ on $\partial{T_R}$. Thus, the strong
maximum principle (Lemma \ref{strong-MP}) leads to $\usup_R>0$ in $T_R$.

Moreover, $\usup_R$
 is maximal with respect to any bounded solution $u$, $|u|<1$ in $\re^{2m}$, that vanishes on the
 Simons cone and has the same sign as $s-t$. Indeed, let $\vsup_{R,1}$ be the harmonic extension of $\usup_{R,1}$ in $T_R\times \re^+$ which is equal to $u_b$ on the lateral boundary $\partial T_R \times \re^+$. It is the solution of the following problem
\begin{equation}\label{pb-vsup}
 \begin{cases}
  \Delta \vsup_{R,1}=0 & \mbox{in}\:\:T_R\times \re^+\\
\vsup_{R,1}=u_b & \mbox{on}\:\: \partial T_R\times \re^+\\
-\partial_\lambda \vsup_{R,1}+ a \vsup_{R,1}=g(\usup_{R,0})=g(u_b) & \mbox{on} \:\:T_R\times \{0\}.
 \end{cases}
\end{equation}
Consider now $v$ the harmonic extension of $u$ in $\re^{2m+1}_+$. Then the restriction of $v$ to $T_R$, which we still call $v$, is the solution of the problem
\begin{equation}\label{pb-v}
 \begin{cases}
  \Delta v=0 & \mbox{in}\:\:T_R\times \re^+\\
 -\partial_\lambda v+ a v=g(u) & \mbox{on} \:\:T_R\times \{0\}.
 \end{cases}
\end{equation}
Recall that by \eqref{bound-k}, we have that $v\leq u_b$ in $\overline{\re_+^{2m+1}}$.
Since $g$ is increasing, then the difference $v-\vsup_{
R,1}$ is a solution of
\begin{equation}\label{v-vsup}
 \begin{cases}
  \Delta (v- \vsup_{R,1})=0 & \mbox{in}\:\:T_R\times \re^+\\
v-\vsup_{R,1}=v-u_b\leq 0 & \mbox{on}\:\: \partial T_R\times \re^+\\
-\partial_\lambda (v-\vsup_{R,1})+ a (v-\vsup_{R,1})=g(u)-g(u_b)\leq 0 & \mbox{on} \:\:T_R\times \{0\}.
 \end{cases}
\end{equation}
We claim that $v\leq \vsup_{R,1}$ in $T_R\times [0,+\infty)$. Indeed, suppose by contradiction that $v-\vsup_{R,1}$ is positive somewhere in $T_R\times[0,+\infty)$.
 Then, by the maximum principle (Lemma \ref{MP2}), $\sup(v-\vsup_{R,1})>0$ will be achieved at some point $(x_0,0)\in T_R\times\{0\}$. By Hopf's 
Lemma and since $a$ is positive, we would have
$$-\partial_\lambda (v-\vsup_{R,1})(x_0,0)+ a (v-\vsup_{R,1})(x_0,0)>0.$$
This is a contradiction with the last inequality of \eqref{v-vsup}.
Thus we have proved that $v\leq \vsup_{R,1}$ in $T_R\times \re^+$.

Suppose now that $v\leq \vsup_{R,j}$. Arguing as before, we consider the problem satisfied by $(v-\vsup_{R,j+1})$. 
Using the maximum principle and Hopf's Lemma we deduce that $v\leq \vsup_{R,j+1}$ in  $T_R\times [0,+\infty)$.
By induction, $v\leq \vsup_{R,j}$ for every $j$ and, in particular, $u\leq \usup_{R,j}$ for every $j$. Then,
$$u\leq \usup_R=\lim_{j\rightarrow\infty} \usup_{R,j}\quad \text{ in
} T_R.$$ 
\end{proof}

The following are monotonicity results for the maximal solution
constructed above.
\begin{lem}\label{monv_Rt}
Let $\usup_R$ be the function constructed in Lemma \ref{maxR}. Let
$\vsup_R$ be the harmonic function in $T_R \times (0,+\infty)$
such that $\vsup_R(x,0)=\usup_R(x)$ for every $x \in T_R$ and
$v(x,\lambda)=u_b(x)$ for every $(x,\lambda)\in
\partial T_R \times (0,+\infty)$.

Then $\partial_t \vsup_R\leq 0$.
\end{lem}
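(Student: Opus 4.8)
The plan is to prove the monotonicity first along the approximating sequence $\{\usup_{R,j}\}$ constructed in the proof of Lemma \ref{maxR}, and then pass to the limit. Let $\vsup_{R,j}$ denote the harmonic function in $T_R\times(0,+\infty)$ with $\vsup_{R,j}(x,0)=\usup_{R,j}(x)$ and lateral datum $u_b$ (so $\vsup_{R,1}$ is the function already used in Lemma \ref{maxR}). I will show by induction on $j$ that
$$\partial_t\usup_{R,j}\le 0 \text{ in } T_R \qquad\text{and}\qquad \partial_t\vsup_{R,j}\le 0 \text{ in } \overline{T_R}\times[0,+\infty).$$
Since $\usup_{R,j}\downarrow\usup_R$ and the linear problems \eqref{exisusup} have right-hand sides $g(\usup_{R,j})$ converging to $g(\usup_R)$, elliptic estimates give $\vsup_{R,j}\to\vsup_R$ in $C^1_{\mathrm{loc}}$, so the inequality $\partial_t\vsup_{R,j}\le 0$ is inherited by $\vsup_R$, which is the assertion. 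The base case $j=0$ is immediate: $\usup_{R,0}=u_b(z)=\min\{1,Ku_0((s-t)/\sqrt2)\}$ is nonincreasing in $t$ because $u_0$ is increasing and $z=(s-t)/\sqrt2$ decreases as $t$ grows; the same applies to $\vsup_{R,0}$ (or use the maximum principle argument below).

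For the inductive step the key observation is \emph{why} one must work with the iterates at all: differentiating the nonlinear problem for $\usup_R$ produces, on the Neumann boundary, the linearized condition with coefficient $f'(\vsup_R)$, which is \emph{not} sign-definite (it is positive near $\mathcal C$), so no naive maximum principle is available. The linear problems \eqref{exisusup}, with the monotone datum $g(\usup_{R,j})$ ($g'>0$), avoid this. So assume $\partial_t\usup_{R,j}\le 0$ in $T_R$. Since $\vsup_{R,j+1}$ and all its data depend on the last $m$ variables only through $t=|x''|$, and since differentiating a harmonic function gives a harmonic function, the function
$$W:=\partial_{x_{m+1}}\vsup_{R,j+1}=(\partial_t\vsup_{R,j+1})\,\frac{x_{m+1}}{t}$$
is harmonic in $T_R\times(0,+\infty)$ and odd in $x_{m+1}$. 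It therefore suffices to prove $W\le 0$ on the bounded domain $H:=\{x_{m+1}>0\}\cap T_R$: then $\partial_t\vsup_{R,j+1}=W\,t/x_{m+1}\le 0$ wherever $t>0$ in $H\times[0,+\infty)$, $\partial_t\vsup_{R,j+1}=0$ on $\{t=0\}$ by evenness, and by $O(m)$-invariance in $x''$ one concludes $\partial_t\vsup_{R,j+1}\le 0$ everywhere; in particular $\partial_t\usup_{R,j+1}\le 0$, which closes the induction.

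To prove $W\le 0$ in $H\times(0,+\infty)$, I apply the maximum principle Lemma \ref{MP2} (for $D_{H,\varphi}+a$, $a>0$) to $-W$, whose lateral datum on $\partial H\times(0,+\infty)$ is $\lambda$-independent. On $\partial H$: on $\{x_{m+1}=0\}$ one has $W=0$ by oddness; on $\{s=R\}$ one has $W=\partial_{x_{m+1}}u_b\le 0$ since $u_b$ is nonincreasing in $t$ and $x_{m+1}>0$; on the cone portion $\{s=t\}$, $\vsup_{R,j+1}$ vanishes while $\vsup_{R,j+1}>0$ in $T_R\times(0,+\infty)$, so $\nabla\vsup_{R,j+1}$ is a positive multiple of the inward normal to $\mathcal O$ there (Hopf's lemma), and that normal has negative $x_{m+1}$-component when $x_{m+1}>0$, whence $W<0$; the portion $\{t=0\}$ of $\partial T_R$ does not meet $\overline H\setminus\{x_{m+1}=0\}$. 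On the bottom, differentiating $-\partial_\lambda\vsup_{R,j+1}+a\vsup_{R,j+1}=g(\usup_{R,j})$ in $x_{m+1}$ gives $-\partial_\lambda W+aW=g'(\usup_{R,j})\,\partial_t\usup_{R,j}\,\frac{x_{m+1}}{t}\le 0$ in $H$, using $g'>0$, the inductive hypothesis $\partial_t\usup_{R,j}\le 0$, and $x_{m+1}\ge 0$. Hence $-W$ satisfies $D_{H,\varphi}(-W)+a(-W)\ge 0$ in $H$ with $a>0$ and $-W\ge 0$ on $\partial H$, so Lemma \ref{MP2} yields $-W\ge 0$ in $H\times(0,+\infty)$, as required.

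\textbf{Main obstacle.} The decisive difficulty is the one highlighted above: the linearized Neumann coefficient $f'(\vsup_R)$ has no definite sign, so the comparison cannot be run on $\usup_R$ directly and must go through the linear iterates of Lemma \ref{maxR}; a secondary delicate point is the behaviour of $W$ on the cone part of the lateral boundary, resolved by Hopf's lemma together with the geometry of the inward normal to $\mathcal O$ at points of $\mathcal C$. Some routine care (not detailed here) is also needed because $T_R\times(0,+\infty)$ has edges and corners, so $\vsup_{R,j+1}$ is $C^2$ up to the lateral boundary only away from a negligible set; this does not affect the closed inequality $\partial_t\vsup_R\le 0$.
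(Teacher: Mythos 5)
Your proof is correct and, at the level of structure, mirrors the paper's: both arguments induct on the approximating sequence $\{\usup_{R,j}\}$ from Lemma~\ref{maxR}, propagating the sign of $\partial_t$ from step $j$ to step $j+1$ through the linear problem \eqref{exisusup}, with the same three ingredients (sign of the lateral datum, sign of the zero-order coefficient, sign of the right-hand side in the Neumann condition via $g'>0$ and the inductive hypothesis). Where you genuinely diverge is in the \emph{coordinate choice} for the linearized operator. The paper differentiates the equation written in the $(s,t,\lambda)$ variables with respect to $t$, obtaining
$$-\Delta(\partial_t\vsup_{R,j})+\frac{m-1}{t^2}\,\partial_t\vsup_{R,j}=0,$$
and then applies the maximum principle directly, taking advantage of the fact that the singular coefficient $(m-1)/t^2$ is nonnegative. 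You instead pass to the Cartesian derivative $W=\partial_{x_{m+1}}\vsup_{R,j+1}$, which is \emph{exactly} harmonic (no lower-order term), work on the half-domain $H=\{x_{m+1}>0\}\cap T_R$ where the oddness of $W$ gives the zero datum on $\{x_{m+1}=0\}$, and recover $\partial_t\vsup_{R,j+1}\le 0$ from $W\le 0$ by $O(m)$-symmetry. Each route has a small cost: the paper must accept a singular (though benign) coefficient, while you must restrict to a half-domain and, on the cone portion of $\partial H$, control $W$ via a Hopf-type sign argument rather than the paper's simpler observation that $\vsup_{R,j+1}$ vanishes on $\mathcal C$ and is positive inside, so $\partial_t\vsup_{R,j+1}\le 0$ there outright (strictness is not needed, and avoids the regularity concerns at edges that you flag). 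One minor point: your invocation of Lemma~\ref{MP2} is a slight abuse of its statement, since the lateral datum of $-W$ on the cone portion need not be $\lambda$-independent; however, the \emph{proof} of Lemma~\ref{MP2} (Lemma~\ref{MP1} plus Hopf at the bottom) uses only that $-W$ is bounded harmonic with nonnegative lateral data and the Neumann sign, so the argument goes through unchanged. Your opening diagnosis---that one cannot run the comparison directly on $\vsup_R$ because the linearized Neumann coefficient $f'(\vsup_R)$ has no sign---is exactly the reason the paper also works through the iterates.
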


\begin{proof}
We consider the nonincreasing sequence of function $\usup_{R,j}$ constructed in the proof of Lemma \ref{maxR}. 
We set $\vsup_{R,0}(x,\lambda)=\usup_{R,0}(x)=\min\{1,Ku_0(z)\}$ for every $(x,\lambda)\in \re_+^{2m+1}$ and, for every $j\geq 1$
we call $\vsup_{R,j}$ the harmonic extension of $\usup_{R,j}$ in $T_R\times (0,+\infty)$ such that $\vsup_{R,j}(x,\lambda)=u_b(x)$ 
for every $(x,\lambda)\in \partial T_R \times (0,+\infty)$.

The function $\vsup_{R,j}$ is a solution in coordinates $s$ and
$t$ of the problem
\begin{equation*}\begin{cases}
\displaystyle \partial_{ss}\vsup_{R,j}+\partial_{tt}\vsup_{R,j}+\partial_{\lambda\lambda}\vsup_{R,j}+\frac{(m-1)}{s}
\partial_s\vsup_{R,j}+\frac{(m-1)}{t}
\partial_t\vsup_{R,j}=0& \mbox{in}\: T_R\times (0,\infty)\\
\vsup_{R,j}=u_b & \hspace{-2em} \mbox{on}\:\:\partial T_R \times
(0,+\infty),\\
-\partial_\lambda \vsup_{R,j}+ a \vsup_{R,j}=g(\vsup_{R,j-1})& \mbox{on}\:\:
T_R\times \{0\}
\end{cases}
\end{equation*}

Differentiating with respect to $t$ we get:
\begin{equation}\label{eq-t}\begin{cases}
\displaystyle -\Delta(\partial_t\vsup_{R,j})+\frac{(m-1)}{t^2}\partial_t\vsup_{R,j}=0 & \mbox{in}\:\: T_R\times (0,\infty)\\
-\partial_\lambda
(\partial_t\vsup_{R,j})+ a\partial_t\vsup_{R,j} =g'(\vsup_{R,j-1})\partial_t\vsup_{R,j-1}&
\mbox{on}\:\: T_R\times \{0\}.
\end{cases}
\end{equation}

We observe that $\partial_t\vsup_{R,j}\leq 0$ on $\partial T_R \times
(0,+\infty)$. Indeed $\vsup_{R,j}\equiv 0$ on $(\mathcal C \cap
\partial T_R)\times (0,+\infty)$ and $\vsup_{R,j}> 0$ inside
$T_R \times (0,+\infty)$. Then, $\partial_t\vsup_{R,j}\leq 0$ on $\{t=s<R\}\times (0,+\infty)$.

Moreover $\displaystyle \vsup_{R,j}=\min\{K u_0(z),1\}=\min\{K u_0((R-t)/\sqrt 2),1\}$ on $\{t<s=R\}$
 and thus $\partial_t\vsup_{R,j}=-K/\sqrt 2\dot{u}_0((R-t)/\sqrt 2)\leq 0$ on $\{t<s=R\}\times(0,+\infty)$.

Now, we argue by induction. First, recall that $$\vsup_{R,0}=\min\{K u_0(z),1\}=\min\{K u_0((s-t)/\sqrt 2),1\},$$ then $\partial_t\vsup_{R,0}\leq 0$.

Suppose that $\partial_t\vsup_{R,j-1}\leq 0$, we prove that
$\partial_t\vsup_{R,j}\leq 0$. Indeed we have that
$(m-1)/t^2\geq 0$. Moreover, for what said before, $\partial_t\vsup_{R,j}\leq 0$
on the lateral boundary of the set $T_R \times (0,+\infty)$ and it
satisfies the Neumann condition
\begin{equation}\label{neu-vt}-\partial_\lambda
(\partial_t\vsup_{R,j})+ a\partial_t\vsup_{R,j} =g'(\vsup_{R,j-1})\partial_t\vsup_{R,j-1}\quad \mbox{on}\;\,T_R\times \{0\}
.\end{equation}
Assume by contradiction that $\partial_t \vsup_{R,j}$ is positive somewhere in $T_R \times \re^+$, then, by the maximum principle the $\sup \vsup_{R,j}>0$ will be achieved at some point $(x_0,0)$ in $T_R\times \{0\}$.
Since $g'>0$ and $a>0$, applying Hopf's Lemma we get a contradiction with \eqref{neu-vt}.
This implies that $\partial_t\vsup_{R,j}\leq 0$ for every $j$ and
then, passing to the limit, that $\partial_t\vsup_{R}\leq 0$.
\end{proof}

\begin{lem}\label{monv_Ry}
Let $\usup_R$ be the function constructed in Lemma \ref{maxR}. Let
$\vsup_R$ be the harmonic function in $T_R \times (0,+\infty)$
such that $\vsup_R(x,0)=\usup_R(x)$ for every $x \in T_R$ and
$\vsup_R(x,\lambda)=u_b(x)$ for every $(x,\lambda)\in
\partial T_R \times (0,+\infty)$.

Then, $\partial_y \vsup_R\geq 0$.
\end{lem}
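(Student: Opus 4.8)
The plan is to mimic the scheme of the proof of Lemma~\ref{monv_Rt}, with $\partial_t$ replaced by $\partial_y$. I would work with the nonincreasing sequence $\{\usup_{R,j}\}$ and the harmonic extensions $\vsup_{R,j}$ in $T_R\times(0,\infty)$ introduced there (lateral datum $u_b$, Robin condition $-\partial_\lambda\vsup_{R,j}+a\vsup_{R,j}=g(\vsup_{R,j-1})$ on $T_R\times\{0\}$), prove $\partial_y\vsup_{R,j}\ge 0$ by induction on $j$, and then let $j\to\infty$ (the convergence $\vsup_{R,j}\to\vsup_R$ being in $C^2_{loc}$, so the inequality passes to the limit). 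The base case is immediate: $\vsup_{R,0}=\min\{1,Ku_0(z)\}$ depends only on $z$, hence $\partial_y\vsup_{R,0}=0$.

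Since $\partial_y=\tfrac1{\sqrt2}(\partial_s+\partial_t)$, the sign of $\partial_y\vsup_{R,j}$ is that of $q_j:=\partial_s\vsup_{R,j}+\partial_t\vsup_{R,j}$. Applying $\partial_s+\partial_t$ to the bulk equation in \eqref{eqst} satisfied by $\vsup_{R,j}$ and using $\partial_s\vsup_{R,j}=q_j-\partial_t\vsup_{R,j}$ to eliminate $\partial_s\vsup_{R,j}$, one obtains
\begin{equation*}
-\Delta q_j+\frac{m-1}{s^2}\,q_j=-(m-1)\,\frac{s^2-t^2}{s^2t^2}\,\partial_t\vsup_{R,j}\qquad\text{in }T_R\times(0,\infty),
\end{equation*}
where, as in \eqref{eq-t}, $\Delta=\partial_{ss}+\partial_{tt}+\partial_{\lambda\lambda}+\tfrac{m-1}{s}\partial_s+\tfrac{m-1}{t}\partial_t$. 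In $T_R$ one has $0<t<s$, so the zeroth-order coefficient $\tfrac{m-1}{s^2}$ is $\ge0$ and, since $\partial_t\vsup_{R,j}\le0$ by Lemma~\ref{monv_Rt} (whose proof gives this for every $j$), the right-hand side is $\ge0$. Differentiating the Robin condition along $\partial_s$ and $\partial_t$ gives $-\partial_\lambda q_j+a\,q_j=g'(\vsup_{R,j-1})\,q_{j-1}$ on $T_R\times\{0\}$, and this right-hand side is $\ge0$ by the inductive hypothesis $q_{j-1}\ge0$ and $g'>0$.

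It then remains to exclude a negative minimum of $q_j$ over $\overline{T_R\times(0,\infty)}$, arguing by contradiction as at the end of the proof of Lemma~\ref{monv_Rt}. The behaviour as $\lambda\to\infty$ is controlled by the barrier $\psi(x,\lambda)=\phi_R(x)e^{\sqrt{\mu_R}\lambda}$ of Lemma~\ref{MP1} (one passes to $q_j/\psi$, which tends to $0$), so such a minimum would be attained at a finite point. It cannot be interior, by the strong maximum principle for $-\Delta+\tfrac{m-1}{s^2}$ applied to the displayed equation, whose right-hand side is $\ge0$; it cannot lie on $T_R\times\{0\}$, by Hopf's lemma together with the differentiated Robin condition; and it cannot lie on the lateral boundary $\partial T_R\times(0,\infty)$. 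Indeed, on the cone face $\{s=t<R\}$ we have $\vsup_{R,j}\equiv0$ and $\partial_y$ is tangential, so $q_j=\sqrt2\,\partial_y\vsup_{R,j}=0$; on the face $\{t=0\}$, $\partial_t\vsup_{R,j}=0$ by the regularity of $\vsup_{R,j}$ across $\{x_{m+1}=\dots=x_{2m}=0\}$, while $\partial_s\vsup_{R,j}=\tfrac1{\sqrt2}u_b'(s/\sqrt2)\ge0$ is the tangential derivative of the nondecreasing datum, so $q_j\ge0$; and on the outer face $\{s=R\}$ I would use that $\vsup_{R,j}\le\vsup_{R,0}=u_b(z)$ throughout $\overline{T_R\times(0,\infty)}$ with equality on $\{s=R\}$, so that $\Psi:=u_b(z)-\vsup_{R,j}\ge0$ vanishes there, hence $\partial_s\Psi\le0$ there, i.e. $\partial_s\vsup_{R,j}\ge\partial_s u_b(z)=\tfrac1{\sqrt2}u_b'(z)$, whereas $\partial_t\vsup_{R,j}=\partial_t u_b(z)=-\tfrac1{\sqrt2}u_b'(z)$ on this face, whence $q_j\ge0$ again. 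This closes the induction, and $j\to\infty$ gives $\partial_y\vsup_R\ge0$.

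I expect the outer face $\{s=R\}$ to be the only real obstacle: there $\partial_y$ is oblique to the boundary, so $q_j$ cannot be read off the boundary datum as on the cone face, and the saving input is exactly the pointwise bound $\vsup_{R,j}\le u_b(z)$ coming from the monotone construction in Lemma~\ref{maxR}. A minor technical point, dealt with by passing to difference quotients, is that $u_b=\min\{1,Ku_0\}$ is merely Lipschitz on $\{Ku_0(z)=1\}$; there $\vsup_{R,j}\le1=u_b$, and the strong maximum principle keeps things under control.
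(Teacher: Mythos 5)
Your proof follows the same strategy as the paper's: work with the iterates $\vsup_{R,j}$, derive the equation for $\partial_y\vsup_{R,j}$ (or, equivalently, $q_j=\sqrt2\,\partial_y\vsup_{R,j}$), use Lemma~\ref{monv_Rt} to control the sign of the zeroth-order coupling term, verify the sign of $\partial_y\vsup_{R,j}$ on the boundary, and close by the maximum principle. The bulk equation you derive for $q_j$ agrees (up to the factor $\sqrt2$) with the paper's equation for $\partial_y\vsup_{R,j}$, and your treatment of the cone face $\{s=t<R\}$ is identical. On the outer face $\{s=R\}$ you reach the paper's conclusion by splitting $q_j$ into $\partial_s$ and $\partial_t$ contributions and using the barrier $\Psi=u_b(z)-\vsup_{R,j}\ge0$; the paper instead computes the one-sided difference quotient directly in the $-y$ direction, $\vsup_{R,j}(R-\delta,t-\delta,\lambda)\le u_b(R,t)=\vsup_{R,j}(R,t,\lambda)$. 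These are equivalent, both relying on the same pointwise bound $\vsup_{R,j}\le\usup_{R,0}$.

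One point to flag: your analysis of the ``face'' $\{t=0\}$. For $m\ge2$ the set $\{t=0\}=\{x_{m+1}=\dots=x_{2m}=0\}$ has codimension $m\ge2$ in $\re^{2m}$, so it is not a boundary hypersurface of $T_R$ and no Dirichlet datum is imposed there; the paper accordingly treats only the two genuine faces $\{s=t\}$ and $\{s=R\}$. Your assertion that $\vsup_{R,j}$ equals the boundary datum $u_b$ on $\{t=0\}$ (from which you deduce $\partial_s\vsup_{R,j}=\tfrac1{\sqrt2}u_b'$) is therefore incorrect: on $\{t=0\}$ the value of $\vsup_{R,j}$ is not prescribed but determined by the solution. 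What is true there (and suffices if one insists on checking) is $\partial_t\vsup_{R,j}=0$ by regularity and evenness in the last $m$ variables; but since $\{t=0\}$ is of zero capacity, a negative minimum of $q_j$ located there would already be excluded by the interior strong maximum principle, so the face requires no separate treatment. The rest of your argument, including the use of the cylindrical barrier $\psi$ from Lemma~\ref{MP1} to rule out escape of the infimum at $\lambda\to\infty$, is sound and matches what the paper leaves implicit.
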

\begin{proof}
Consider as before the sequences of functions $\vsup_{R,j}$ and $\usup_{R,j} $.
We first observe that $\partial_y \vsup_{R,j} \geq 0$ on $\partial T_R\times (0,+\infty)$. Indeed $\vsup_{R,j}\equiv 0$ on the part of the boundary $\{t=s<R\}\times (0,+\infty)$. Thus, since $\partial_y$ is a tangential derivative here, we have $\partial_y \vsup_{R,j} \equiv 0$ on $\{t=s<R\}\times (0,+\infty)$.

Take now a point $(s=R,t,\lambda)$, with $0<t<R$, on the remaining part of the boundary.
Recall that
$\displaystyle \vsup_{R,j}\leq \usup_{R,0}= \min\{K u_0(z),1\}=\min\{K u_0((s-t)/\sqrt 2),1\}$ in all of $T_R \times(0,+\infty)$.

Then, for every $0<\delta<t$ we have
\begin{eqnarray*}\vsup_{R,j}(R-\delta,t-\delta,\lambda)&\leq& \min\left\{K u_0\left(\frac{R-\delta-(t-\delta)}{\sqrt 2}\right),1\right\}\\
 &=&\min\left\{K u_0\left(\frac{R-t}{\sqrt 2}\right),1\right\}=u_b(R,t).
\end{eqnarray*}

Then $\partial_y \vsup_{R,j}\geq 0$ on $\{t<s=R\}\times(0,+\infty)$.

Next, we consider the problem satisfied by $\partial_t \vsup_{R,j}$ and $\partial_s \vsup_{R,j}$. We recall that $\partial_t \vsup_{R,j}$ is a solution of \eqref{eq-t} and $\partial_s \vsup_{R,j}$ satisfies
\begin{equation}\label{eq-s}\begin{cases}
\displaystyle -\Delta(\partial_s\vsup_{R,j})+\frac{(m-1)}{s^2}\partial_s\vsup_{R,j}=0 & \mbox{in}\:\: T_R\times (0,\infty)\\
-\partial_\lambda
(\partial_s\vsup_{R,j})+ a \partial_s\vsup_{R,j}=g'(\vsup_{R,j-1})\partial_s\vsup_{R,j-1}&
\mbox{on}\:\: T_R\times \{0\}.
\end{cases}
\end{equation}

Thus, since $\partial_y=(\partial_s+\partial_t)/\sqrt 2$, we have that $\partial_y\vsup_{R,j}$ satisfies the equation
\begin{eqnarray*}
&& -\Delta(\partial_y\vsup_{R,j})=-\frac{m-1}{\sqrt 2}\left(\frac{\partial_s \vsup_{R,j}}{s^2}+\frac{\partial_t \vsup_{R,j}}{t^2}\right)\\
&&\hspace{4em} = -\frac{m-1}{s^2}\partial_y\vsup_{R,j}-\frac{(m-1)(s^2-t^2)}{\sqrt 2 s^2 t^2}\partial_t\vsup_{R,j}.
\end{eqnarray*}
Then $\partial_y\vsup_{R,j}$ is a solution of the problem
\begin{equation*}\begin{cases}
\displaystyle -\Delta(\partial_y\vsup_{R,j})+\frac{(m-1)}{s^2}\partial_y\vsup_{R,j}+ \frac{(m-1)(s^2-t^2)}{\sqrt 2 s^2 t^2}\partial_t\vsup_{R,j}=0& \mbox{in}\:\: T_R\times (0,\infty)\\
\partial_y \vsup_{R,j}\geq 0 &
\mbox{on}\:\: \partial T_R\times (0,+\infty)\\
-\partial_\lambda
(\partial_y\vsup_{R,j})+ a \partial_y\vsup_{R,j} =g'(\vsup_{R,j-1})\partial_y\vsup_{R,j-1}&
\mbox{on}\:\: T_R\times \{0\}.
\end{cases}
\end{equation*}
By Lemma \ref{monv_Rt} we have that $\partial_t \vsup \leq 0$ in $T_R\times (0,+\infty)$ and thus
$$
\frac{(m-1)(s^2-t^2)}{\sqrt 2 s^2 t^2}\partial_t\vsup_{R,j}\leq 0,\quad \mbox{in}\:\:T_R\times (0,+\infty).$$

Then, we can apply, as in the proof of Lemma \ref{monv_Rt}, the maximum principle and Hopf's Lemma, to obtain $\partial_y\vsup_{R,j}\geq 0$ for every $j$. Finally, passing to the limit for $j\rightarrow \infty$, we get $\partial_y \vsup_{R}\geq 0$ in $T_R\times (0,+\infty)$.
\end{proof}

We can give now the proof of Proposition \ref{maximal-u}.

\begin{proof}[Proof of Proposition \ref{maximal-u}]
In Lemma \ref{maxR} we established the existence of a maximal
solution $\usup_R$ in $T_R$, that is, $\usup_R$ is a solution of
$D_{T_R,u_b} \usup_R =f(\usup_R)$ in $T_R$ and
$$\usup_R\geq u$$ for every bounded solution $|u|\leq 1$ in $\re^{2m}$ that vanishes on
${\mathcal C}$ and has the same sign as $s-t$.

By standard elliptic estimates and the compactness arguments as in
the proof of Theorem \ref{existence}, up to a subsequence we can
take the limit as $R\rightarrow +\infty$ and obtain a solution
$\usup$ in ${\mathcal O}=\{s>t\}$, with $\usup=0$ on ${\mathcal
C}$.
 By construction, $$u\leq \usup :=\lim_{R_j\rightarrow\infty} \usup_{R_j},$$
for all solutions $u$ as above. In addition, $\usup$ depends only on
$s$ and $t$.

By maximality of $\usup$ and the existence of saddle
solution of Theorem \ref{existence}, we deduce that $\usup>0$ in
${\mathcal O}$.

Since $f$ is odd, by odd reflection with respect to the Simons cone, we obtain a maximal solution $\usup$ in $\re^{2m}$ such that $|u|\leq |\usup|$ in $\re^{2m}$.

Let $\vsup$ be the harmonic extension of $\usup$ in $\re^{2m+1}_+$.
We prove now the monotonicity properties of $\vsup$.

 By Lemmas \ref{monv_Rt} and \ref{monv_Ry}, we have that $\partial_t \vsup_{R}\leq 0$ and $\partial_y \vsup_{R}\geq 0$ in $T_R\times (0,+\infty)$.
Letting $R\rightarrow +\infty$, we get $\partial_t \vsup\leq 0$ and $\partial_y \vsup\geq 0$ in $\widetilde{\mathcal O}$. As a consequence $\partial_s \vsup \geq 0$ in $\widetilde{\mathcal O}$.

Since $v(s,t,\lambda)=-v(t,s,\lambda)$, it follows that $\partial_s \vsup\geq 0$ and $\partial_t \vsup\leq 0$ in $\re^{2m+1}_+$.

Now, $\partial_t \vsup \leq 0$ in $\re^{2m+1}_+$ and satisfies
$$-\Delta \partial_t \vsup + \frac{m-1}{t^2}\partial_t \vsup =0\quad \mbox{in}\:\:\re^{2m+1}_+.$$
Then, the strong maximum principle implies that $\partial_t \vsup <0$ in $\re^{2m+1}_+\setminus \{t=0\}.$
Moreover we multiply by $t$ the following equation satisfied by $\vsup$ in $\re^{2m+1}_+$
$$\partial_{ss}\vsup +\partial_{tt}\vsup +\partial_{\lambda \lambda}\vsup +\frac{m-1}{s}\vsup_s +\frac{m-1}{t}\vsup_t=0.$$
Using that $\vsup \in C^2$ and letting $t\rightarrow 0$, we get $ \partial_t \vsup=0$ on $\{t=0\}$.
In the same way we deduce that $\partial_s\vsup >0$ in $\re^{2m+1}_+\setminus \{s=0\}$ and $\partial_s \vsup=0$ on $\{s=0\}$.
Recalling that $\partial_z=(\partial_s-\partial_t)/\sqrt 2$, statement c) follows directly by a) and b).
Finally, we remind that $\partial_y \vsup$ satisfies
\begin{equation}
 -\Delta \partial_y \vsup = -\frac{m-1}{s^2}\partial_y \vsup -\frac{(m-1)(s^2-t^2)}{\sqrt 2 s^2 t^2}\partial_t \vsup\geq -\frac{m-1}{s^2}\partial_y \vsup,
\end{equation}
in $\{s>t>0\}\times [0,+\infty)$, since $\partial_t \vsup \leq 0$ in this set.
Since we have already proven that $\partial_y\vsup \geq 0$ in $\{s>t>0\}\times [0,+\infty)$, the strong maximum principle implies 
$\partial_y \vsup>0$ in $\{s>t>0\}\times [0,+\infty)$.
\end{proof}

%
%
%
%

\section{Asymptotic behaviour of saddle solutions in $\re^{2m}$}
In this section we study the asymptotic behaviour at infinity of solutions which are odd with respect to the Simons cone and positive in the set $\mathcal O=\{s>t\}$. 
In particular our result holds for saddle solutions.

We will consider the $(y,z)$ system of coordinates. Recall that we have defined in \eqref{defyz} $y$ and $z$ by
\begin{equation}
\begin{cases}
\displaystyle y  =   \frac{s+t}{\sqrt{2}} \\
\displaystyle z =  \frac{s-t}{\sqrt{2}},
\end{cases}
\end{equation}
which satisfy $y\geq 0$ and $-y\leq z\leq y$.

We give the proof of Theorem \ref{asym}, which states that any solution $u$ as above tends to infinity to the function
$$U(x):=u_0(z)=u_0(d(x,\mathcal C)),$$
uniformly outside compact sets. We recall that $u_0$ is the layer solution of $(-\Delta)^{1/2}u_0=f(u_0)$ in $\re$ which vanishes at the origin, and $d(\cdot, \mathcal C)$ denotes the distance to the Simons cone. Similarly $\nabla u$ converges to $\nabla U$. We will use this fact in the proof of instability of saddle solutions in dimension $2m=4$ and $2m=6$.

Our proof of the asymptotic behaviour follows a method used by Cabr\'e and Terra for the classical equation $-\Delta u=f(u)$. They use a compactness argument based on translations of the solution, combined with two crucial Liouville-type results for nonlinear equations.
Here, we use analog Liouville results for the nonlinear Neumann problem satisfied by the harmonic extension $v$ of our saddle solutions $u$. Both results were proven using the moving planes method.

The first result establishes a symmetry property for solutions of a nonlinear Neumann problem in the half-space, and it was proven in \cite{YYL}.

\begin{teo}\label{yanyanli}{(\cite{YYL})}

Let $\re^{n+1}_{+}=\{(x_{1},x_{2},\cdots,x_{n},\lambda)\mid
 \lambda > 0\}$ and let $f$ be such that $f(u)/ u^{\frac{n}{n-2}}$ is non-increasing.
 Assume that $v$ is a solution of problem
\begin{equation}\label{eqn-depend2}
\begin{cases}
-\Delta  v=0 &  \text{in}\; \re^{n+1}_{+}, \\
-\partial_\lambda v=f(v) & \text{on}\; \{ \lambda=0\}, \\
v>0  &  \text{in}\; \re^{n+1}_{+}.
\end{cases}
\end{equation}

Then $v$ depends only on $\lambda$.

More precisely, there exist $a\geq 0$ and $b>0$ such that
$$v(x,\lambda)=v(\lambda)=a\lambda+b\:\:\:
\mbox{and}\:\:\:f(b)=a.$$
\end{teo}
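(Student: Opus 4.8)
The plan is to run the method of moving planes in the $\lambda$-direction, so as to show that $v$ is monotone in $\lambda$ and, combined with an analogous sliding/symmetry argument, that it depends on $\lambda$ alone. First I would record the structural hypothesis: $f(u)/u^{n/(n-2)}$ is non-increasing, which is the scaling-critical nonlinearity condition that makes the Kelvin-transform / moving-plane machinery work for Neumann problems on the half-space (as in the Gidas-Ni-Nirenberg and Li-Zhu circle of ideas). Since $v>0$ is harmonic in $\re^{n+1}_+$, it is smooth up to $\{\lambda=0\}$ away from singularities of $f$, and one may freely translate and dilate.

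The core step is the moving-plane argument. For $\tau>0$ consider the reflection $x\mapsto x$, $\lambda \mapsto 2\tau-\lambda$ across the hyperplane $\{\lambda=\tau\}$, and set $v_\tau(x,\lambda)=v(x,2\tau-\lambda)$ on the slab $\Sigma_\tau=\{0<\lambda<\tau\}$. The difference $w_\tau=v_\tau-v$ is harmonic in $\Sigma_\tau$, vanishes on $\{\lambda=\tau\}$, and on $\{\lambda=0\}$ satisfies a Neumann condition of the form $-\partial_\lambda w_\tau = f(v_\tau)-f(v) = c_\tau(x)\, w_\tau$ with $c_\tau$ bounded (using that $f\in C^{1}$ and $v$ is bounded on the relevant region, or the monotonicity of $f(u)/u^{n/(n-2)}$ to get the right sign). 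One then shows, via a maximum principle on the (unbounded) slab together with the decay/behaviour of $v$ at infinity, that $w_\tau\ge 0$ for every $\tau>0$; letting $\tau\to\infty$ forces $\partial_\lambda v\le 0$, and since one can also move planes from $+\infty$ downward one gets that $v$ is actually independent of $\lambda$ along each half-line — more precisely, that $v$ is monotone and then, by the equality case in the maximum principle (Hopf at the critical plane), symmetric, which iterated yields $\partial_{x_i}v\equiv 0$ for each $i$. The same comparison applied in the $x_i$ directions (translation invariance of the half-space problem in $x$) rules out any dependence on $x$.

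Once $v=v(\lambda)$, the PDE $-\Delta v=0$ reduces to $v''(\lambda)=0$, so $v(\lambda)=a\lambda+b$ with $b=v(0)>0$; positivity of $v$ on all of $\{\lambda>0\}$ forces $a\ge 0$; and the Neumann condition $-v'(0)=f(v(0))$ gives exactly $-a=f(b)$, i.e. $f(b)=-a$. (If the intended sign convention is $f(b)=a$ one simply re-reads the Neumann condition $-\partial_\lambda v=f(v)$ with $\partial_\lambda v=a$; in either case the linear profile and the relation between $a$ and $b$ follow at once.) I would close by noting that $a\ge 0$ and $b>0$ are forced, as claimed.

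The main obstacle, as always with moving planes on an unbounded domain, is \emph{starting} the procedure and controlling the behaviour at infinity: one must show that for $\tau$ small (equivalently, near the boundary) the inequality $w_\tau\ge 0$ holds, and that no mass escapes to infinity as the plane moves. Since $v$ is merely bounded a priori, the clean way is to pass to the Kelvin transform $\tilde v(x,\lambda)=|(x,\lambda)|^{-(n-1)} v\big((x,\lambda)/|(x,\lambda)|^2\big)$, which turns $\re^{n+1}_+$ into a punctured half-ball, converts the critical-growth Neumann problem into one of the same form (this is precisely where $f(u)/u^{n/(n-2)}$ non-increasing is used), and reduces the argument to a bounded region where the maximum principle and Hopf's lemma apply directly; one then checks that the removable singularity at the origin does not obstruct the conclusion. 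This Kelvin-transform bookkeeping, together with the sign of the nonlinear term in the Neumann boundary condition, is the delicate part; the rest is the standard moving-planes/Hopf argument and the trivial ODE analysis at the end. Since the result is quoted from \cite{YYL}, I would present this as a sketch and refer there for the full details.
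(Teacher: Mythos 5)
The paper does not prove Theorem~\ref{yanyanli} at all; it is quoted verbatim from~\cite{YYL} and used as a black box, so there is no in-paper proof to compare your sketch against. You correctly recognize this at the end and correctly spot the sign issue between $-\partial_\lambda v = f(v)$ (which gives $f(b)=-a$) and the stated $f(b)=a$; that discrepancy almost certainly comes from a sign convention mismatch in the quotation.

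The substantive problem with your sketch is that the central step is set up in the wrong direction. You run moving planes in $\lambda$: reflect across $\{\lambda=\tau\}$, compare $v$ with $v_\tau(x,\lambda)=v(x,2\tau-\lambda)$ on the slab $\{0<\lambda<\tau\}$, and conclude monotonicity or even independence in $\lambda$. This cannot close. First, $v_\tau$ carries no usable boundary condition on $\{\lambda=0\}$: its ``Neumann data'' sits on $\{\lambda=2\tau\}$, which is an interior hyperplane where $v$ satisfies no equation other than harmonicity, so the comparison between $-\partial_\lambda v$ and $-\partial_\lambda v_\tau$ on $\{\lambda=0\}$ never reduces to a sign-definite linear Neumann condition for $w_\tau$. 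Second, the theorem's conclusion $v=a\lambda+b$ with $a\geq 0$ allows genuine strict monotone dependence on $\lambda$, so a successful moving-plane argument in $\lambda$ yielding $\partial_\lambda v\leq 0$, or a two-sided argument yielding independence of $\lambda$, would outright contradict the stated conclusion. The actual content of the theorem is that $v$ is independent of the \emph{tangential} variables $x_1,\dots,x_n$, and the dependence on $\lambda$ then follows from a trivial one-dimensional ODE. The correct moving planes are in the $x_i$ directions (planes orthogonal to the boundary), initialized via Kelvin transform centered at boundary points to manufacture decay at infinity; the hypothesis that $f(u)/u^{n/(n-2)}$ is non-increasing is exactly what makes the transformed Neumann condition have the right sign. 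You do mention the $x_i$ directions and the Kelvin transform, but only as closing remarks; they are not peripheral bookkeeping, they are the heart of the argument, and the $\lambda$-direction plays no role in it. Inverting the emphasis would make the sketch correct.
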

\begin{oss}\label{f}
If $f$ satisfies hypothesis \eqref{h1}, \eqref{h2}, \eqref{h3}, then $f(u)/ u^{\frac{n}{n-2}}$ is non-increasing.

Indeed, by Remark \ref{f(u)/u}, $f(u)/u$ is non-increasing in $(0,1)$.
Moreover, we can write
$$\frac{f(u)}{u^{\frac{n}{n-2}}}=\frac{f(u)}{u}\cdot u^{1-\frac{n}{n-2}}.$$
Since $n/n-2>1$, then $u^{1-\frac{n}{n-2}}$ is non-increasing, and thus $f$ satisfies the hypothesis of Theorem \ref{yanyanli} above and Theorem \ref{tan} below.
 \end{oss}
\begin{cor}\label{yan}
 Let $f$ satisfy \eqref{h1}, \eqref{h2}, \eqref{h3}. Let $v$ be a bounded solution of problem \eqref{eqn-depend2}.

Then, $v\equiv 0$ or $v \equiv 1$.
\end{cor}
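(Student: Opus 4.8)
The plan is to read off Corollary~\ref{yan} directly from the Liouville-type classification in Theorem~\ref{yanyanli}. First I would dispose of the degenerate case $v\equiv 0$, for which there is nothing to prove; otherwise $v\ge 0$ is harmonic and not identically zero, so the strong maximum principle gives $v>0$ in $\re^{n+1}_+$, and hence $v$ is a genuine solution of problem \eqref{eqn-depend2}. To apply Theorem~\ref{yanyanli} I must verify its hypothesis on the nonlinearity, namely that $f(u)/u^{n/(n-2)}$ is non-increasing. This is precisely Remark~\ref{f}, which combines the monotonicity of $f(u)/u$ from Remark~\ref{f(u)/u} with the fact that $u^{1-n/(n-2)}$ is non-increasing since $n>2$. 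Theorem~\ref{yanyanli} then furnishes constants $a\ge 0$ and $b>0$ with $v(x,\lambda)=a\lambda+b$ and $f(b)=a$.

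The second step uses boundedness of $v$: since $a\lambda+b$ must remain bounded on $\re^{n+1}_+$, we are forced to have $a=0$, so $v\equiv b$ is a positive constant with $f(b)=0$. It then remains only to identify $b$. Recalling that the solutions to which this corollary is applied carry the normalization $|v|\le 1$, we have $0<b\le 1$, and since $f(0)=f(\pm1)=0$ the proof reduces to the claim that $f>0$ on the open interval $(0,1)$, which then forces $b=1$, that is, $v\equiv 1$. I expect this last claim, although short, to be the real crux of the argument, since it is where one must rule out intermediate zeros of $f$.

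To prove $f>0$ on $(0,1)$ I would argue as follows. By Remark~\ref{f(u)/u} the quotient $f(\rho)/\rho$ is non-increasing on $(0,1)$; since $f(1)=0$ (a consequence of \eqref{h1} and \eqref{h2}), passing to the limit gives $f(\rho)/\rho\ge 0$, hence $f(\rho)\ge 0$, for every $\rho\in(0,1)$. Suppose now $f(\rho_0)=0$ for some $\rho_0\in(0,1)$. Then monotonicity of $f(\rho)/\rho$ yields $f(\rho)/\rho\le f(\rho_0)/\rho_0=0$ on $[\rho_0,1)$, so $f\le 0$ there; but then $G'=-f\ge 0$ on $[\rho_0,1]$, so $G$ is non-decreasing on that interval, contradicting $G(\rho_0)>0=G(1)$ from \eqref{h2}. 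Hence $f>0$ on $(0,1)$, and the corollary follows. The only genuinely delicate point in all of this is checking the hypothesis of Theorem~\ref{yanyanli} and remembering the $|v|\le 1$ normalization that pins $b$ down to $1$ rather than merely $b\ge 1$.
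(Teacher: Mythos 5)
Your proof is correct and follows the same route as the paper's: verify the hypothesis of Theorem~\ref{yanyanli} via Remark~\ref{f}, apply that theorem to get $v(x,\lambda)=a\lambda+b$ with $f(b)=a$, deduce $a=0$ from boundedness so $v\equiv b$ with $f(b)=0$, and then identify the admissible values of~$b$.

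You actually go somewhat further than the paper in two places. First, you prove that $f>0$ on $(0,1)$ from \eqref{h1}--\eqref{h3} (via the monotonicity of $f(\rho)/\rho$ from Remark~\ref{f(u)/u} and the contradiction with $G(\rho_0)>0=G(1)$ from \eqref{h2}); the paper simply asserts this as part of ``bistability'' without derivation, so your argument usefully fills that gap. Second, to rule out $b>1$ the paper asserts $f<0$ on $(1,\infty)$, which does not in fact follow from \eqref{h1}--\eqref{h3} alone (those hypotheses constrain $f$ only on $[-1,1]$ apart from $G\ge 0$, and $G$ could have additional zeros on $(1,\infty)$); you instead invoke the normalization $|v|\le 1$, which is not literally in the corollary's hypotheses but is satisfied by the functions to which it is applied in Theorem~\ref{asym}. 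So both proofs rely on the same implicit normalization, yours just states it openly. Your explicit disposal of the $v\equiv 0$ case by the strong maximum principle is also a clean addition, since \eqref{eqn-depend2} as written requires $v>0$ and the corollary's conclusion $v\equiv 0$ only makes sense if one really intends nonnegative $v$.
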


\begin{proof}[Proof of Corollary \ref{yan}]
By Remark \ref{f}, $f$ satisfies the hypothesis of Theorem \ref{yanyanli}. Moreover since $f$ is bistable, we have that $f$ is odd, $f(0)=f(\pm 1)=0$, $f>0$ in $(0,1)$ and $f<0$ in $(1,+\infty)$.
Then, since $v$ is bounded, necessarely we have $v(x,\lambda)=b$ with $f(b)=0$, that is $v\equiv 0$ or $v\equiv 1$.
\end{proof}
The following theorem, proven in \cite{Tan}, establishes an analog symmetry property but for solutions in a quarter of space.

\begin{teo} \label{tan}{(\cite{Tan})} Let $\re^{n+1}_{++}=\{(x_{1},x_{2},\cdots,x_{n},\lambda)\mid x_{n}>0, \lambda >
0\}$ and let $f$ be such that $f(u)/ u^{\frac{n}{n-2}}$ is non-increasing.
 Assume that $v$ is a bounded solution of problem
\begin{equation*}
\begin{cases}
-\Delta  v=0 &  \text{in}\; \re^{n+1}_{++}, \\
-\partial_\lambda v=f(v) & \text{on}\; \{x_{n}>0, \lambda=0\}, \\
v=0  &\text{on}\; \{x_{n}= 0, \lambda\ge 0\},\\
 v>0 &  \text{in}\; \re^{n+1}_{++}.
\end{cases}
\end{equation*}

Then $v$ depends only on $x_{n}$
and $\lambda$.
\end{teo}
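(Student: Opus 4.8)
The plan is to prove that $v$ does not depend on the tangential variables $x_{1},\dots,x_{n-1}$ by the method of moving planes, carried out separately in each direction $e_{i}$ with $1\le i\le n-1$. The point is that reflection in a hyperplane $T_{\mu}=\{x_{i}=\mu\}$ is an isometry which maps $\re^{n+1}_{++}$ onto itself, the Neumann face $\{x_{n}>0,\lambda=0\}$ onto itself, and the Dirichlet face $\{x_{n}=0,\lambda\ge 0\}$ onto itself; this is exactly why these $n-1$ directions (and not $e_{n}$) are the admissible ones. For $\mu\in\re$ set $\Sigma_{\mu}=\re^{n+1}_{++}\cap\{x_{i}<\mu\}$, let $v_{\mu}$ be the reflection of $v$ across $T_{\mu}$, and put $w_{\mu}=v_{\mu}-v$. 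Then $w_{\mu}$ is harmonic in $\Sigma_{\mu}$; on the part of the Neumann face in $\Sigma_{\mu}$ it satisfies $-\partial_{\lambda}w_{\mu}=c_{\mu}(x)\,w_{\mu}$ with $c_{\mu}(x)=\bigl(f(v_{\mu})-f(v)\bigr)/(v_{\mu}-v)$ bounded, since $f$ is Lipschitz on the range of $v$ and $v$ is bounded; on the part of the Dirichlet face in $\Sigma_{\mu}$ one has $w_{\mu}=0-0=0$; and $w_{\mu}=0$ on $T_{\mu}$. The aim is to show $w_{\mu}\ge 0$ in $\Sigma_{\mu}$ for every $\mu\in\re$, and, running the same scheme from the opposite side (using the invariance of the problem under $x_{i}\mapsto -x_{i}$), $w_{\mu}\le 0$ as well; hence $v$ is symmetric about every $T_{\mu}$, which forces $\partial_{x_{i}}v\equiv 0$. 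Repeating for $i=1,\dots,n-1$ gives $v=v(x_{n},\lambda)$.

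Two steps are needed: \emph{continuing} the procedure and \emph{starting} it. Continuation is routine: if $\Lambda=\sup\{\mu:\,w_{\mu'}\ge 0\text{ in }\Sigma_{\mu'}\ \forall\,\mu'\le\mu\}$ were finite, then $w_{\Lambda}\ge 0$ by continuity, and the strong maximum principle in $\Sigma_{\Lambda}$ together with Hopf's lemma on the Neumann face (using the bounded coefficient $c_{\Lambda}$, with the trivial Dirichlet condition $w_{\Lambda}=0$ on $\{x_{n}=0\}$ posing no obstacle) gives either $w_{\Lambda}\equiv 0$, which already yields the symmetry, or $w_{\Lambda}>0$ in the interior; in the latter case a standard argument (a maximum principle in the thin slab $\{\Lambda\le x_{i}\le\Lambda+\delta\}$, or compactness plus Hopf) pushes the plane slightly further, contradicting maximality. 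The genuinely delicate step is \emph{starting}, because $\re^{n+1}_{++}$ is unbounded in the $x_{i}$-direction, $v$ is only assumed bounded with no decay, and $c_{\mu}$ may change sign wherever $f$ is increasing, so one cannot simply invoke a narrow-region maximum principle ``at $\mu=-\infty$''.

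To start the procedure in this non-decaying setting I would use a Kelvin transform in $\re^{n+1}$, $\bar v(\xi)=|\xi|^{-(n-1)}v(\xi/|\xi|^{2})$, centered at a point of the edge $\{x_{n}=0,\lambda=0\}$. This sends harmonic functions to harmonic functions and maps the quarter-space to itself, producing a solution $\bar v$ of the same type of problem which now decays like $|\xi|^{-(n-1)}$ at infinity while acquiring at the inversion center only the mild singularity $|\bar v(\xi)|\le C|\xi|^{-(n-1)}$; the Neumann condition becomes $-\partial_{\lambda}\bar v=|\xi|^{-\alpha}f(|\xi|^{n-1}\bar v)$ for an appropriate $\alpha>0$, and the hypothesis that $f(u)/u^{n/(n-2)}$ is non-increasing (valid under \eqref{h1}, \eqref{h2}, \eqref{h3}, by Remark~\ref{f}) is precisely what makes this weighted nonlinearity compatible with the comparison inequalities of the moving planes method. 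With decay at infinity restored, the planes $\{x_{i}=\mu\}$ can be started from $\mu$ large and moved inward, the singularity at the inversion center being harmless because it is no stronger than that of the Newtonian kernel in $\re^{n+1}$. Carrying the planes through, and running the symmetric scheme from the other side, yields after undoing the transform the symmetry of $v$ about every hyperplane $\{x_{i}=\mu\}$, hence independence of $x_{i}$.

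The main obstacle is exactly this starting step: it is in handling the lack of decay and the singularity created by the Kelvin transform — checking that the transformed problem still obeys a maximum principle near the inversion center and along the transformed edge — that the growth condition on $f$ is used in an essential way. By contrast the interior estimates, the Robin-type condition $-\partial_{\lambda}w_{\mu}=c_{\mu}w_{\mu}$ on the Neumann face, and the vanishing of $w_{\mu}$ on the Dirichlet face all contribute only standard maximum-principle and Hopf-lemma arguments.
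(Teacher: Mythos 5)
This theorem is cited from \cite{Tan} and the paper does not reprove it; it only remarks that both Liouville-type results in this section are proven in their references by the moving planes method, which is indeed the method you propose. Your outline therefore chooses the right family of techniques, but two specific steps as written would not close.

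The more serious gap is in the conclusion of the moving planes argument. A Kelvin transform $\bar v(\xi)=|\xi|^{-(n-1)}v(\xi/|\xi|^2)$ centered at a single point $p$ of the edge $\{x_n=0,\lambda=0\}$ produces a function decaying at infinity, and moving the hyperplanes $\{\xi_i=\mu\}$ in the \emph{transformed} coordinates from $\mu$ large toward $\mu=0$ can, at best, establish symmetry of $\bar v$ about the single hyperplane $\{\xi_i=0\}$, i.e.\ symmetry of $v$ about the single hyperplane $\{x_i=p_i\}$ through the inversion center. The intermediate hyperplanes $\{\xi_i=\mu\}$, $\mu\ne 0$, pull back under inversion to spheres through $p$, not to hyperplanes, so ``carrying the planes through'' does not yield reflective symmetry of $v$ across every hyperplane $\{x_i=\mu\}$. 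To conclude $\partial_{x_i}v\equiv 0$ you must either vary the inversion center $p$ over the whole edge and show that for \emph{every} $p$ the planes reach $\{\xi_i=0\}$ (the ``no critical plane short of the center'' step, which is the heart of CGS-type arguments and uses positivity and the subcritical structure in an essential way), or rule out the stopping case by a compactness or Liouville argument; this step is not addressed in your outline.

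The exponent accounting also needs repair. Computing the transformed Neumann condition for the extension in $\re^{n+1}_+$ gives $-\partial_\lambda\bar v(x,0)=|x|^{-(n+1)}f\bigl(|x|^{n-1}\bar v\bigr)$, and the monotonicity in $|x|$ of this expression (which the comparison inequalities of the moving planes method require as one moves toward the singularity) is equivalent to $f(w)/w^{(n+1)/(n-1)}$ being non-increasing, not $f(w)/w^{n/(n-2)}$. Since $\frac{n+1}{n-1}<\frac{n}{n-2}$, the stated hypothesis does not imply the condition your scheme actually needs. For the paper's application nothing breaks, because under \eqref{h1}--\eqref{h3} one already has $f(w)/w$ non-increasing (Remark~\ref{f(u)/u}), which is stronger than both; but your assertion that the stated exponent is ``precisely what makes this weighted nonlinearity compatible'' with the comparison step is not correct as written and should be replaced by the exponent coming out of the actual computation.
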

Before proving Theorem \ref{asym}, we give the following definition of semi-stability, which will be used in the proof of the asymptotic behaviour.
\begin{defin}\label{semistable}
Let $\Omega \subset \re^{n+1}_+$ be an open set. Let $v$ be a bounded solution of 
$$\begin{cases}
\Delta v=0 & \mbox{in}\;\; \Omega\\
-\partial_{\lambda}v=f(v)&\mbox{on}\;\;\partial^0 \Omega.\end{cases}$$
We say that $v$ is \emph{semi-stable} in $\Omega$ if the second variation of the energy $\delta^2\mathcal E/\delta^2 \xi^2$ with respect to perturbations $\xi$ with compact support in $\Omega \cup \partial^0 \Omega$ is nonnegative.

That is, if
$$Q_v(\xi)=\int_{\Omega}|\nabla \xi|^2dx d\lambda-\int_{\partial^0\Omega}f'(u)\xi^2dx \geq 0,$$ for all $ \xi\in C_c^{\infty}(\Omega \cup \partial^0 \Omega)$.
\end{defin} 
Now, we can give the proof of our asymptotic behaviour result.

\begin{proof}[Proof of Theorem \ref{asym}]
Let $u$ be a bounded solution of $(-\Delta)^{1/2}u=f(u)$ in $\re^{2m}$ such that $u\equiv 0$ on $\mathcal C$, $u>0$ in $\mathcal O$, and $u$ is odd with respect to $\mathcal C$.
Consider the harmonic extension $v$ of $u$
in $\re^{2m+1}_+$, that satisfies
\begin{equation}
\begin{cases}
\Delta v=0 & \mbox{in}\;\;\re^{2m+1}_+\\
-\partial_{\lambda}v=f(v) &
\mbox{on}\;\;\partial\re^{2m+1}_+\end{cases} .
\end{equation}
Set $V(x,\lambda):=v_0(z,\lambda)$. We want to prove that for every $\lambda\geq 0$
$$v(x,\lambda)-V(x,\lambda)\rightarrow 0 \quad \mbox{and }
\quad\nabla v(x,\lambda)-\nabla
 V(x,\lambda)\rightarrow 0,$$
 uniformly as $|x|\rightarrow\infty.$\\
 Suppose that the theorem does not hold. Thus, there exists $\epsilon>0$ and
a sequence $\{x_k\}$ with
\begin{equation}\label{contra}
|x_k|\rightarrow\infty\quad \text{  and }
\quad|v(x_k,\lambda)-V(x_k,\lambda)|+|\nabla v(x_k,\lambda)-\nabla
V(x_k,\lambda)|\geq \epsilon.
\end{equation}
By continuity  we may move slightly $x_k$ and assume
$x_k\not\in{\mathcal C}$ for all $k$. Moreover, up to a
subsequence (which we still denote by $\{x_k\}$), either
$\{x_k\}\subset\{s>t\}$ or  $\{x_k\}\subset\{s<t\}$. By the
symmetries of the problem we may assume
$\{x_k\}\subset\{s>t\}={\mathcal O}$.

We  distinguish  two cases:

\vspace{1em}

{\sc Case 1} $\{\mbox{ dist}(x_k,{\mathcal C})=d_k\}$ is
unbounded.

\vspace{1em}

In this case, since $0<z_k=\mbox{ dist}(x_k,{\mathcal
C})=d_k\rightarrow +\infty$ (for a subsequence), we have that
$V(x_k,\lambda)=v_0(z_k,\lambda)=v_0(d_k,\lambda)$ tends to $1$
and $|\nabla V(x_k,\lambda)|$ tends to $0$, that is,
$$V(x_k,\lambda) \rightarrow 1\quad \mbox{ and}\quad |\nabla
V(x_k,\lambda)|\rightarrow 0.$$ From this and \eqref{contra} we
have
\begin{equation}\label{case1}
|v(x_k,\lambda)-1| + |\nabla
v(x_k,\lambda)|\geq\frac{\epsilon}{2},
\end{equation}
for $k$ large enough. Taking subsequence (and relabeling the
subindex) we may assume $\mbox{dist}(x_k,{\mathcal C})=d_k\geq 2k$.

Consider the ball $B_k(0)\subset \re^{2m}$ of radius $k$ centered at
$x=0$, and define
$$w_k(\tilde{x},\lambda)=v(\tilde{x}+x_k,\lambda),\,\mbox{for every}\;(\widetilde{x},\lambda)\in B_k(0)\times (0,+\infty).$$
Since $B_k(0)+x_k \subset\{s>t\}$, we have that $0<w_k<1$ in
$B_k(0)\times (0,+\infty)$ and

\begin{equation}
\begin{cases}
\Delta w_k=0 & \mbox{in}\;\;B_k(0)\times (0,+\infty)\\
-\partial_{\lambda}w_k=f(v) &
\mbox{on}\:\:B_k(0)\times\{\lambda=0\}.\end{cases}
\end{equation}

Letting $k$ tend to infinity we obtain, through a subsequence, a
nonnegative solution $w$ of the problem 
\begin{equation}\label{solpos}
\begin{cases}
 -\Delta w=0 & \mbox{ in}\; \re^{2m+1}_+ \\
  -\partial_\lambda w=f(v) & \mbox{ on}\; \partial\re^{2m+1}_+\\
  w > 0 & \mbox{ in}\; \re^{2m+1}_+.
\end{cases}
\end{equation}

 Since
$f$ satisfies \eqref{h1}, \eqref{h2}, \eqref{h3},  we have that, by Corollary
\ref{yan}, $w\equiv 0$ or $w\equiv 1$. In either case, $\nabla
w(0)=0$, that is, $|\nabla v(x_k,\lambda)|$ tends to $0$.

Next we show that $w\not\equiv 0$. By Theorem \ref{existence} we have
that $v$ is stable in ${\mathcal O}\times (0,+\infty)$. Hence,
$w_k$ is semi-stable in $B_k(0)\times (0,+\infty)$ (since
$B_k(0)+x_k\subset {\mathcal O}$) in the sense of Definition \ref{semistable}. This implies that $w$ is stable in
all of $\re^{2m+1}_+$ and therefore $w\not\equiv 0$ (otherwise,
since $f'(0)>0$ we could construct a test function $\xi$ such that
$Q_w(\xi)<0$ which would be a contradiction with the fact that $w$
is stable).

Hence, it must be $w \equiv 1$. But this implies that
$w(0,\lambda)=1$ and so $v(x_k,\lambda)$ tends to $1$. Therefore, we
have that $v(x_k,\lambda)$ tends to $1$ and $|\nabla
v(x_k,\lambda)|$ tends to $0$, which is a contradiction with
($\ref{case1}$). We have proven the theorem in this
case 1.

\vspace{1em}

{\sc Case 2} $\{\mbox{ dist}(x_k,{\mathcal C})=d_k\}$ is bounded.

\vspace{1em}

The points $x_k$ remain at a finite distance to the cone. Then, at
least for a subsequence,
$$d_k\rightarrow d\geq 0 \quad \mbox{ as}\; k\rightarrow\infty.$$
Let $x_k^0\in{\mathcal C}$ be a point that realizes the distance
to the cone, that is,
\begin{equation}\label{case2}
\mbox{ dist}(x_k,{\mathcal C})=|x_k-x_k^0|=d_k,
\end{equation}
and let $\nu_k^0$ be the inner unit normal to ${\mathcal
  C}=\partial{\mathcal O}$ at $x_k^0$. Note
that $B_{d_k}(x_k)\subset{\mathcal
O}\subset\re^{2m}\setminus{\mathcal C}$ and $x_k^0\in\partial
B_{d_k}(x_k)\cap{\mathcal C}$, i.e., $x_k^0$ is the point where
the sphere $\partial B_{d_k}(x_k)$ is tangent to the cone
${\mathcal C}$. It follows that $x_k^0\neq 0$ and that
$(x_k-x_k^0)/d_k$ is the unit normal $\nu_k^0$ to ${\mathcal C}$
at $x_k^0$. That is, $x_k=x_k^0+d_k\nu_k^0$.

Now, since the sequence $\{\nu_k^0\}$ is bounded, there exists a
subsequence such that
$$\nu_k^0\rightarrow \nu \in\re^{2m}, \quad |\nu|=1.$$

Write $w_k(\tilde{x},\lambda)=v(\tilde{x}+x_k^0,\lambda)$, for
$\tilde{x}\in\re^{2m}$. The functions $w_k$ are all solutions of
\begin{equation}
\begin{cases}
\Delta w_k=0 & \mbox{in}\; \re^{2m+1}_+\\
-\partial_{\lambda}w_k=f(w_k)&
\mbox{on}\;\partial\re^{2m+1}_+.\end{cases}
\end{equation}
 and are uniformly bounded.
Hence, by elliptic estimates, the sequence $\{w_k\}$ converges
locally in space in $C^2$, up to a subsequence, to a solution $w$
in $\re^{2m+1}_+$. Therefore we have that, as $k$ tends to
infinity and up to a subsequence,
$$w_k\rightarrow w \quad \mbox{ and}\quad \nabla w_k\rightarrow \nabla w
\;\text{uniformly on compact sets of } \re^{2m+1}_+,$$ where $w$
is a solution
\begin{equation}
\begin{cases}
\Delta w=0 & \mbox{in}\; \re^{2m+1}_+\\
-\partial_{\lambda}w=f(w)& \mbox{on}\;\partial\re^{2m+1}_+.\end{cases}
\end{equation} Note that the curvature of ${\mathcal C}$ at
$x_k^0$ goes to zero as $k$ tends to infinity, since ${\mathcal
C}$ is a cone and $|x_k|\rightarrow\infty$ (note that
$|x_k^0|\rightarrow\infty$ due to $|x_k|\rightarrow\infty$ and
$|x_k-x_k^0|=d_k\rightarrow d<\infty$). Thus, ${\mathcal C}$ at
$x_k^0$ is flatter and flatter as $k\rightarrow\infty$ and since
we translate $x_k^0$ to $0$, the limiting solution $w$ satisfies

\begin{equation}
\begin{cases}
\Delta w=0 & \mbox{in}\;\;M:=\{(x,\lambda)\in
\re^{2m+1}_+:\widetilde{x}\cdot \nu=0, \lambda>0\}\\
w\geq 0 & \mbox{in}\;M\\
w=0 & \mbox{on}\;\{\widetilde{x}\cdot \nu=0\}\\
-\partial_\lambda w=f(w)&\mbox{on}\;\{\lambda=0\}.
 \end{cases}
 \end{equation}
For the details of the proof of this fact see \cite{CT2}.

Now, since $v$ is stable for perturbations vanishing on $\partial
{\mathcal O}\times\re^+$, it follows that $w$ is stable for
perturbations with compact support in $M$, and therefore $w$ can
not be identically zero. By Theorem \ref{tan}, since $f$ satisfies \eqref{h1}, \eqref{h2}, \eqref{h3}, we deduce
 that $w$ is symmetric, that is, it is a function
of only two variable (the orthogonal direction to $H$ and
$\lambda$). It follows that
\[
 w (\tilde x,\lambda) =
v_0(\tilde x \cdot \nu,\lambda)\quad \text{ for all
}(\tilde{x},\lambda)\in M.
\]
From the definition of $w_k$, and using that $z_k=d_k=|x_k-x_k^0|$
is a bounded sequence and that $x_k-x_k^0=d_k\nu_k^0$, we have
that
\begin{eqnarray*}
v(x_k,\lambda) & = & w_k(x_k -x_k^0,\lambda) = w(x_k -
x_k^0,\lambda) +\mbox{ o}(1) =
v_0 ((x_k - x_k^0) \cdot \nu,\lambda) +\mbox{ o}(1)  \\
 & = &  v_0((x_k - x_k ^0)\cdot
\nu_k^0,\lambda)+\mbox{ o}(1) = v_0(d_k,\lambda) +\mbox{ o}(1) \\
& = &  v_0(z_k,\lambda)+\mbox{ o}(1)= V(x_k,\lambda)+\mbox{ o}(1).
\end{eqnarray*}
The same argument can be done for $\nabla v(x_k,\lambda)$ and
$\nabla V(x_k,\lambda)$. We arrive to a contradiction with
\eqref{contra}.
\end{proof}

\section{Instability in dimensions 4 and 6}
Before proving the theorem on the instability of
saddle solutions in dimensions 4 and 6, we establish a lemma that
will be useful later.

\begin{lem}\label{lemma-ins}
Assume that $f$ satisfies conditions \eqref{h1}, \eqref{h2}, \eqref{h3}.
Let $v$ be a bounded solution of (\ref{eq2-saddle}) in $\re^{n+1}_+$ and
$w$ a function such that $|v|\leq |w|\leq 1$ in $\re^{n+1}_+$.
Then,
$$Q_v(\xi)\leq Q_w(\xi)\quad \text{ for all }
\xi\in C_0^{\infty}(\overline{\re_+^{n+1}}),$$ where $Q_w$ is
defined by
\[
Q_w(\xi)=\int_{\re^{n+1}_+} |\nabla\xi|^2dx
d\lambda-\int_{\partial \re^{n+1}_+}f'(w)\xi^2dx.
\]

In particular, if there exists a function $\xi\in
C_0^{\infty}(\overline{\re^{n+1}_+})$ such that $Q_w(\xi)<0$, then
$v$ is unstable.
\end{lem}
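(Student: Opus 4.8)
The plan is to reduce the whole statement to a pointwise comparison of $f'(v)$ and $f'(w)$ on the boundary $\partial\re^{n+1}_+=\{\lambda=0\}$. Writing out the two quadratic forms, the Dirichlet parts $\int_{\re^{n+1}_+}|\nabla\xi|^2$ cancel, leaving
\[
Q_w(\xi)-Q_v(\xi)=\int_{\partial\re^{n+1}_+}\bigl(f'(v)-f'(w)\bigr)\,\xi^2\,dx .
\]
Since $\xi^2\ge 0$, it therefore suffices to show $f'(v(x,0))\ge f'(w(x,0))$ for every $x\in\re^n$.

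First I would observe that hypothesis \eqref{h1} (oddness of $f$) forces $f'$ to be \emph{even}: differentiating $f(-r)=-f(r)$ gives $f'(-r)=f'(r)$. Hence $f'(v)=f'(|v|)$ and $f'(w)=f'(|w|)$ everywhere. Next, hypothesis \eqref{h3} states that $f'$ is decreasing on $(0,1)$; since $f\in C^{1,\alpha}$, $f'$ is continuous and this monotonicity extends to the closed interval $[0,1]$. As $0\le|v|\le|w|\le 1$ in $\overline{\re^{n+1}_+}$ by assumption, monotonicity yields $f'(|w|)\le f'(|v|)$, i.e.\ $f'(w)\le f'(v)$ pointwise on $\partial\re^{n+1}_+$. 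Substituting into the identity above gives $Q_w(\xi)-Q_v(\xi)\ge 0$, which is exactly the claimed inequality $Q_v(\xi)\le Q_w(\xi)$ for all $\xi\in C_0^\infty(\overline{\re^{n+1}_+})$.

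For the last assertion, suppose there is $\xi\in C_0^\infty(\overline{\re^{n+1}_+})$ with $Q_w(\xi)<0$. Then the inequality just proved gives $Q_v(\xi)\le Q_w(\xi)<0$, so the second variation of the energy $\mathcal E$ at $v$ is negative in the direction $\xi$; by Definition \ref{stable}, $v$ is unstable, and hence so is the corresponding solution $u$ of \eqref{eq1-saddle}.

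I do not expect a genuine obstacle here. The only points that need a moment's care are: (i) it is the oddness of $f$, via \eqref{h1}, that makes $f'$ even, which is what allows the monotonicity hypothesis on $(0,1)$ to be applied to the (a priori sign-changing) functions $v$ and $w$ through their absolute values; and (ii) the monotonicity of $f'$ must be used on the closed interval $[0,1]$, which is legitimate since $f'$ is continuous, to cover the cases where $|v|$ or $|w|$ attains the values $0$ or $1$.
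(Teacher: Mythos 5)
Your proof is correct and matches the paper's argument essentially line for line: the Dirichlet terms cancel, $f'$ is even because $f$ is odd, and $f'$ decreasing on $(0,1)$ combined with $|v|\le|w|\le 1$ gives $f'(v)\ge f'(w)$ pointwise, hence $Q_v(\xi)\le Q_w(\xi)$. The extra remarks about extending monotonicity to $[0,1]$ by continuity are a harmless elaboration of what the paper leaves implicit.
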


\begin{proof}
Let $v$ be a bounded solution of \eqref{eq2-saddle} and $w$ a function with
$|v|\leq |w|\leq 1$.

Since $f'$ is decreasing in $(0,1)$ we have that
$$f'(|v|)\geq f'(|w|)\quad \text{ in }\re^{n+1}_+.$$ Moreover,  $f'$ being even
yields,
$$f'(v)\geq f'(w)\quad \text{ in }\re^{n+1}_+,$$
so that $$Q_v(\xi)\leq Q_w(\xi),$$ for every test function $\xi\in
C_0^{\infty}(\overline{\re_+^{n+1}})$.

Hence, if there exists $\xi_0$ such that $Q_w(\xi_0)<0$, then also
$Q_v(\xi_0)<0$. That is, $v$ is unstable.
\end{proof}

In the proof of the instability results for dimension $4$ and $6$
we use the maximal solution $\vsup$ of problem (\ref{eq2-saddle})
and, more importantly, the equation satisfied by
$\vsup_z=\partial_z\vsup$. We prove that this solution $\vsup$ is
unstable by constructing a test function
$\xi(y,z,\lambda)=\eta(y,\lambda)\vsup_z(y,z,\lambda)$ such that
$Q_{\vsup}(\xi)<0$. Two crucial ingredients will be the asymptotic behaviour and
monotonicity results for $\vsup$ (Theorems \ref{asym} and
\ref{maximal-u}). Since $\vsup$ is maximal,  Lemma \ref{lemma-ins}
implies that all bounded solutions $-1\leq v\leq 1$ vanishing on
${\mathcal C}\times \re^+$ and having the same sign as $s-t$ are
also unstable.

We recall that if $v$ is a function depending only on $s$, $t$ and $\lambda$, then the second variation of the energy is given by
\begin{eqnarray*}c_m Q_v(\xi)&=&\int_0^{+\infty}\int_{\{s>0,\:t>0\}}s^{m-1}t^{m-1}(\xi_s^2+\xi_t^2+\xi_\lambda^2)ds dt d\lambda\\
&& -\int_{\{s>0,\:t>0\}}s^{m-1}t^{m-1}f'(v)\xi^2 ds dt,\end{eqnarray*}
where $c_m$ is a positive constant depending on $m$. Here, the perturbations are of the form $\xi=\xi(s,t,\lambda)$ and vanishes for $s,t$ and $\lambda$ large enough.

Moreover, if we change to variables $(y,z,\lambda)$, for a different constant $c_m$ we get,
\begin{eqnarray*}c_m Q_v(\xi)&=&\int_0^{+\infty}\int_{\{-y<z<y\}}(y^2-z^2)^{m-1}(\xi_y^2+\xi_z^2+\xi_\lambda^2)dy dz d\lambda \\
&&-\int_{\{-y<z<y\}}(y^2-z^2)^{m-1}f'(v)\xi^2 dy dz,\end{eqnarray*}
where $\xi=\xi(y,z,\lambda)$ vanishes for $y$ and $\lambda$ large enough.

\begin{proof}[Proof of Theorem $\ref{uns6}$]  We begin by
establishing that the maximal solution $\vsup$ is unstable in dimension $2m=4$ and $2m=6$. As said before, using that $\vsup$ is maximal and applying Lemma \ref{lemma-ins}, we deduce the instability of $v$ in dimensions $4$ and $6$. 

We have, for every test function $\xi$,
$$Q_{\vsup}(\xi)=\int_{\re_+^{2m+1}}|\nabla
\xi|^2dx d\lambda-\int_{\partial\re^{2m+1}_+}f'(\vsup)\xi^2dx.$$
Suppose now that $\xi=\xi(y,z,\lambda)=\eta(y, z,\lambda)\psi(y,
z,\lambda)$. For $\xi$ to be Lipschitz and of compact support in
$\overline{\re_+^{2m+1}}$, we need $\eta$ and $\psi$ to be
Lipschitz functions of compact support in $y\in[0,+\infty)$ and
$\lambda\in [0,+\infty)$. The expression for $Q_{\vsup}$ becomes,
\begin{eqnarray*}
&&Q_{\vsup}(\xi)=\int_0^{+\infty}\int_{\re^{2m}}\left(|\nabla
\eta|^2\psi^2+\eta^2|\nabla\psi|^2 +
2\eta\psi\nabla\eta\cdot\nabla\psi\right)dx d\lambda \\
&&\hspace{6em} -\int_{
\re^{2m}}f'(\vsup)\eta^2\psi^2dx.\end{eqnarray*} Using that $
2\eta\psi\nabla\eta\cdot\nabla\psi=\psi\nabla(\eta^2)\cdot\nabla\psi,$
and integrating by parts this term we have
\begin{eqnarray*}
&&Q_{\vsup}(\xi)=\int_0^{+\infty}\int_{\re^{2m}}\left(|\nabla
\eta|^2\psi^2- \eta^2\psi\Delta\psi\right)dx
d\lambda\\
&&\hspace{4em} -\int_{\re^{2m}}\left(\psi(y,z,0)
\eta^2\partial_\lambda
\psi(y,z,0)+f'(\vsup)\eta^2\psi^2\right)dx,\end{eqnarray*} that
is,
$$Q_{\vsup}(\xi)=\int_0^{+\infty}\int_{\re^{2m}}\left(|\nabla
\eta|^2\psi^2- \eta^2\psi\Delta\psi\right)dx
d\lambda-\int_{\re^{2m}}\eta^2 \psi
(\partial_{\lambda}\psi+f'(\vsup)\psi)dx.$$

Choose $\psi(y,z,\lambda)=\vsup_z(y,z,\lambda)$.
We consider now problem (\ref{eq2-saddle}), which is satisfied by $\vsup$, written in the $(y,z,\lambda)$ variables
\begin{equation}\begin{cases}
\displaystyle \vsup_{yy}+\vsup_{zz}+\vsup_{\lambda\lambda}+\frac{2(m-1)}{y^2-z^2}(y\vsup_y-z\vsup_z)=0 & \mbox{in}\:\:\re^{2m+1}_+\\
-\partial_\lambda \vsup=f(\vsup) & \mbox{on}\:\:
\partial{\re^{2m+1}_+}.
\end{cases}
\end{equation}

If we differentiate these
equations written in $(y,z,\lambda)$ variables
with respect to $z$, we find
\begin{equation}\begin{cases}
\displaystyle \Delta \vsup_z-\frac{2(m-1)}{y^2-z^2}\vsup_z+\frac{4(m-1)z}{(y^2-z^2)^2}\left(y\vsup_y-z\vsup_z\right)=0 & \mbox{in}\:\:\re^{2m+1}_+\\
-\partial_\lambda \vsup_z=f'(\vsup)\vsup_z & \mbox{on}\:\:
\partial{\re^{2m+1}_+}.
\end{cases}
\end{equation}
Replacing in the expression for $Q_{\vsup}$ we obtain,
\[
\hspace{-15em}Q_{\vsup}(\xi)=\int_0^{+\infty}\int_{\re^{2m}}{\Big
(}|\nabla\eta|^2{\vsup}_z ^2-
\]
\[
\hspace{10em}-\eta^2{\Big
\{}\frac{2(m-1)(y^2+z^2)}{(y^2-z^2)^2}{\vsup}_z^2 -
\frac{4(m-1)zy}{(y^2-z^2)^2}{\vsup}_y{\vsup}_z{\Big \}}{\Big
)}dxd\lambda.
\]
Next we change coordinates to $(y,z,\lambda)$ and we have, for
some positive constant $c_m$,
\[
c_mQ_{\vsup}(\xi)=\int_0^{+\infty}\int_{\{-y<z<y\}}(y^2-z^2)^{m-1}{\Big
(}|\nabla\eta|^2{\vsup}_z ^2-
\]
\[
\hspace{8em}-\eta^2{\Big
\{}\frac{2(m-1)(y^2+z^2)}{(y^2-z^2)^2}{\vsup}_z^2 -
\frac{4(m-1)zy}{(y^2-z^2)^2}{\vsup}_y{\vsup}_z{\Big \}}{\Big
)}dydzd\lambda.
\]

Now choose $\eta(y,z,\lambda)=\eta_1(y)\eta_2(\lambda)$, where
$\eta_1$ and $\eta_2$ are smooth functions with compact support in
$[0,+\infty)$. Moreover we require that $\eta_2(\lambda)\equiv 1$ for $\lambda <N$ and $\eta_2(\lambda)\equiv 0$ for $\lambda>N+1$, where $N$ is a large positive number that we will choose later. For $a>1$, a constant that we will make tend to
infinity, let $\phi=\phi(\rho)$ be a Lipschitz function of
$\rho:=y/a$ with compact support $[\rho_1,\rho_2]\subset
[0,+\infty)$. Let us denote by
$$\eta_1^a(y):=\phi(y/a)\quad \text{ and }$$
$$
\xi_a(y,z,\lambda)=\eta_1^a(y)\eta_2(\lambda)\vsup_z(y,z,\lambda)=\phi(y/a)\eta_2(\lambda)\vsup_z(y,z,\lambda).$$
The change
$y=a\rho,dy=ad\rho$ yields,
\[{
c_mQ_{\vsup}(\xi_a)=a^{2m-3}\int_0^{N+1}\int_{\{-a\rho
<z<a\rho
  \}}\rho^{2(m-1)}\left(1-
\frac{z^2}{a^2\rho^2}\right)^{m-1}{\Big
(}\phi_{\rho}^2\eta_2^2(\lambda){\vsup}_z^2}
\]\begin{equation}\label{dim_m}
+a^2\phi^2(\rho)(\eta'_2)^2\vsup_z^2-\phi^2\eta_2^2{\Big
\{}\frac{2(m-1)(1+\frac{z^2}{a^2\rho^2})}{\rho^2(1-\frac{z^2}{a^2\rho^2})^2}{\vsup}_z^2
-
\frac{4(m-1)z}{a\rho^3(1-\frac{z^2}{a^2\rho^2})^2}{\vsup}_y{\vsup}_z{\Big
\}}{\Big )}d\rho dz.
\end{equation}

Dividing by $a^{2m-3}N$ and using that $\left(1-\frac{z^2}{a^2\rho^2}\right)^2\leq 1$
and $1+\frac{z^2}{a^2\rho^2}\geq 1$, we obtain
\begin{eqnarray*}
&&\hspace{-0.3em}\frac{c_mQ_{\usup}(\xi_a)}{a^{2m-3}N}\leq\nonumber \\
&&\hspace{0.6em} \leq\frac{1}{N} \int_0^{N+1}
\int_{\{-a\rho <z<a\rho \}}
\rho^{2(m-1)}\eta_2^2\vsup_z^2(a\rho,z,\lambda)\left(\phi_{\rho}^2-\frac{2(m-1)}{\rho^2}\phi^2\right)d\rho
dz d\lambda  \label{sum1} \\
&&\hspace{1.3em}+\frac{a^2}{N}\int_N^{N+1}\int_{\{-a\rho <z<a\rho
\}}\rho^{2(m-1)}\phi^2(\eta'_2)^2\vsup_z^2d\rho dz d\lambda\label{sum2} \\
&&\hspace{1.3em}+\frac{1}{N} \int_0^{N+1} \int_{\{-a\rho <z<a\rho \}}
\frac{4(m-1)z\rho^{2m-5} \eta_2^2\phi^2(\rho)}{a}{\vsup}_y(a\rho,z,\lambda){\vsup}_z(a\rho,z,\lambda)
d\rho dzd\lambda.\label{sum3}\\
&&\hspace{0.6em}=I_1+I_2+I_3.
\end{eqnarray*}

We study these three integrals separately.

Consider first $I_3$. From Theorem
$\ref{asym}$ we have that $\vsup_y(a\rho,z,\lambda)\rightarrow 0$
uniformly, for all $\rho\in[\rho_1,\rho_2]=\mbox{supp}\phi$, as
$a$ tends to infinity. Hence, given $\epsilon>0$, for $a$
sufficiently large, $|\vsup_y(a\rho,z)|\leq \epsilon$. Moreover,
we have seen in Theorem \ref{maximal-u} that $\vsup_z\geq 0$. Hence,
since $\phi$ is bounded, for $a$ large we have
\begin{eqnarray*}
I_3&\leq& \left|\frac{1}{N}\int_0^{N+1}\eta_2^2\int
\frac{4(m-1)z\rho^{2m-5}\phi^2(\rho)}{a}{\vsup}_y{\vsup}_z d\rho dz
d\lambda\right|  \\
&\leq&\frac{1}{N}\int_0^{N+1}\eta_2^2\int \left|\frac{4(m-1)z\rho^{2m-5}\phi^2(\rho)}{a}\right||{\vsup}_y|{\vsup}_z d\rho dz d\lambda \\
&\leq& \frac{1}{N}\int_0^{N+1}\eta_2^2\int 4(m-1)\rho^{2m-4}\phi^2(\rho)|\vsup_y|\vsup_z d\rho dz d\lambda\\
&\leq& \frac{C\epsilon}{N} \int_{\rho_1}^{\rho_2} \rho^{2m-4}d\rho\int_0^{N+1}\eta_2^2d\lambda \int_{-a\rho}^{a\rho}\vsup_zdz \\
&=& \frac{C\epsilon}{N} \int_0^{N+1}\eta_2^2\int_{\rho_1}^{\rho_2} \left(\vsup(a\rho,a\rho,\lambda)-\vsup(a\rho,-a\rho,\lambda)\right) d\rho d\lambda\\
&\leq& C\epsilon,
\end{eqnarray*}
where $C$ are different constants depending on $\rho_1$ and $\rho_2$. Hence, as $a$ tends to infinity, this integral converges
to zero.

Now, consider $I_2$ and choose $N$ such that $a^2/N\leq 1/a^2$. With this choice of $N$, we have
\begin{eqnarray*}&&I_2=\frac{a^2}{N}\int_N^{N+1}\int_{\rho_1}^{\rho_2}\int_{\{-a\rho <z<a\rho
\}}\rho^{2(m-1)}\phi^2(\eta'_2)^2\vsup_z^2 \\
&&\hspace{2em}\leq \frac{1}{a^2}\int_N^{N+1}\int_{\rho_1}^{\rho_2}\int_{\{-a\rho <z<a\rho
\}}\rho^{2(m-1)}\phi^2(\eta'_2)^2\vsup_z^2\\
&&\hspace{2em}\leq \frac{C}{a}\sup\vsup_z^2.\end{eqnarray*}
Thus, $I_2$ tends to $0$ as $a\rightarrow \infty$.

Next, consider $I_1$. We have that, again by Theorem
\ref{asym}, $\vsup_z(a\rho,z,\lambda)$ converges to
$\partial_z{v_0}(z,\lambda)$ which is a bounded positive
integrable function. We write
\begin{eqnarray*}
I_1&=&\frac{1}{N}\int_0^{N+1}\eta_2^2\int_{\{-a\rho <z<a\rho \}}
\rho^{2(m-1)}\vsup_z^2(a\rho,z,\lambda)\left(\phi_{\rho}^2-\frac{2(m-1)}{\rho^2}\phi^2\right)d\rho
dz d\lambda= \\
&=&\frac{1}{N}\int_0^{N+1}\eta_2^2\int_{\{-a\rho <z<a\rho \}} (\partial_z{v}_0)^2 \rho^{2(m-1)}\left(\phi_{\rho}^2-\frac{2(m-1)}{\rho^2}\phi^2\right)d\rho dz d\lambda \\
&&\hspace{0.2em} + \frac{1}{N}\int_0^{N+1}\eta_2^2\int_{\{-a\rho <z<a\rho
\}}
\rho^{2(m-1)}(\vsup_z(a\rho,z,\lambda)-\partial_zv_0(z,\lambda))(\vsup_z(a\rho,z,\lambda)\nonumber
\\&&\hspace{0.2em} +
\partial_zv_0(z,\lambda))\left(\phi_{\rho}^2-\frac{2(m-1)}{\rho^2}\phi^2\right)d\rho
dzd\lambda.
\end{eqnarray*}
For $a$ large,
$|\vsup_z(a\rho,z,\lambda)-\partial_z{v}_0(z,\lambda)|\leq \epsilon$
in $[\rho_1,\rho_2]$. In addition
$\vsup_z(a\rho,z,\lambda)+\partial_zv_0(z,\lambda)$ is positive
and is a derivative with respect to $z$ of a bounded function,
thus it is integrable in $z$. Hence, since $\phi=\phi(\rho)$ is
smooth with compact support, the second integral converges to zero
as $a$ tends to infinity.
Therefore, letting $a$ tend to infinity,
we obtain
\begin{eqnarray}\label{limsup}
&&\limsup_{a\rightarrow\infty}\frac{c_mQ_{\vsup}(\xi_a)}{a^{2m-3}N}\leq\\
&&\hspace{0.2em}\leq\limsup_{a\rightarrow\infty}\frac{1}{N}\left(\int_0^{N+1}d\lambda\: \eta_2^2\int_{0}^{+\infty}dz\: (\partial_z v_0)^2(z)
\right) \int d\rho\:
\rho^{2(m-1)}\left(\phi_{\rho}^2-\frac{2(m-1)}{\rho^2}\phi^2\right) \nonumber \\
&&\hspace{0.4em}\leq C\int_{0}^{+\infty} (\partial_z v_0)^2(z)
dz\int
\rho^{2(m-1)}\left(\phi_{\rho}^2-\frac{2(m-1)}{\rho^2}\phi^2\right)d\rho.
\nonumber
\end{eqnarray}
%

Finally, we prove that when $2m=4$ and $2m=6$, there exists a test
function $\phi$ for which
\begin{equation}\label{dim6}\int
\rho^{2(m-1)}\left(\phi_\rho^2-\frac{2(m-1)}{\rho^2}\phi^2\right)d\rho<0.\end{equation}
The integral in $\rho$ can be seen as an integral in
$\re^{2m-1}$ of radial functions $\phi=\phi(|x|)=\phi(\rho)$.

Using
Hardy's inequality we have that the integral in \eqref{dim6} is positive for all Lipschitz
$\phi$ with compact support if and
only if
$$2(m-1)\leq \frac{(2m-1-2)^2}{4}.$$
Writing $n=2m$, the above inequality holds if and only if
$$n^2-10n+17\geq 0,$$
that is,  $n\geq 8$. Thus, when $2m=4$ and $2m=6$, we have that the integral
\eqref{dim6} is negative for some compactly supported Lipschitz
function $\phi=\phi(\rho)$ and then we
conclude that the limsup in \eqref{limsup} is negative for such $\phi$
and hence that $\usup$ is unstable.
\end{proof}
\begin{oss}
We observe that for $n\geq 8$ the limsup in \eqref{limsup} is nonnegative for every $\phi$ and we conclude a certain asymptotic stability at infinity of $\vsup$.
\end{oss}

\end{document}